\newtheorem{lemma}{Lemma}[section]
\newtheorem{thm}{Theorem}[section]
\newtheorem{definition}{Definition}[section]
\newcommand{\floor}[1]{{\left\lfloor {#1} \right \rfloor}}
\def\Var{\textsf{Var}} 
\def\text#1{\mbox{\rm #1}}
\DeclarePairedDelimiter{\ceil}{\lceil}{\rceil}
\newcommand{\wh}{\widehat}
\newcommand{\wt}{\widetilde}
\newcommand{\TV}{{\sf TV}}
\title{Density Estimation with Contaminated Data: \\
Minimax Rates and Theory of Adaptation
}
\author{Haoyang Liu and Chao Gao}
\affil{
University of Chicago
}
\begin{document}
\maketitle

\begin{abstract}
This paper studies density estimation under pointwise loss in the setting of contamination model. The goal is to estimate $f(x_0)$ at some $x_0\in\mathbb{R}$ with i.i.d. observations,
$$
X_1,\dots,X_n\sim (1-\epsilon)f+\epsilon g,
$$
where $g$ stands for a contamination distribution. In the context of multiple testing, this can be interpreted as estimating the null density at a point. We carefully study the effect of contamination on estimation through the following model indices: contamination proportion $\epsilon$, smoothness of target density $\beta_0$, smoothness of contamination density $\beta_1$, and level of contamination $m$ such that $g(x_0)\leq m$. It is shown that the minimax rate with respect to the squared error loss is of order
$$
[n^{-\frac{2\beta_0}{2\beta_0+1}}]\vee[\epsilon^2(1\wedge m)^2]\vee[n^{-\frac{2\beta_1}{2\beta_1+1}}\epsilon^{\frac{2}{2\beta_1+1}}],
$$
which characterizes the exact influence of contamination on the difficulty of the problem. We then establish the minimal cost of adaptation to contamination proportion, to smoothness and to both of the numbers. It is shown that some small price needs to be paid for adaptation in any of the three cases. Variations of Lepski's method are considered to achieve optimal adaptation.

The problem is also studied when there is no smoothness assumption on the contamination distribution. This setting that allows for an arbitrary contamination distribution is recognized as Huber's $\epsilon$-contamination model.
The minimax rate is shown to be
$$
[n^{-\frac{2\beta_0}{2\beta_0+1}}]\vee [\epsilon^{\frac{2\beta_0}{\beta_0+1}}].
$$
The adaptation theory is also different from the smooth contamination case. While adaptation to either contamination proportion or smoothness only costs a logarithmic factor, adaptation to both numbers is proved to be impossible.

\smallskip

\textbf{Keywords:} minimax rate, nonparametric functional estimation, adaptive estimation, contamination model, robust statistics, Lepski's method, null distribution.
\end{abstract}

\section{Introduction}

Nonparametric density estimation is a well-studied classical topic \citep{silverman1986density,devroyecombinatorial,tsybakov09}. In this paper, we consider this classical statistical task with a modern twist. Instead of assuming i.i.d. observations from a true density $f$, we assume
\begin{equation}
X_1,...,X_n\sim (1-\epsilon)f+\epsilon g,\label{eq:huber}
\end{equation}
where $g$ is a density not related to $f$, and the goal is to estimate $f(x_0)$ at some $x_0\in\mathbb{R}$. In other words, for each observation, there is an $\epsilon$ probability that the observation is sampled from a distribution not related to the density of interest.

This problem naturally appears in both robust statistics and multiple testing literature. In robust statistics literature, $g$ has the name ``contamination", and the task is interpreted as robustly estimating a density $f$ with contaminated data points \citep{chen2016general}. In multiple testing literature, $f$ and $g$ are respectively called null density and alternative density, and the task is interpreted as estimating null density at a point \citep{efron2004large}. In this paper, we use the name ``contamination" to refer to both $g$ and the observations generated from it.

The nature of the problem heavily depends on the assumptions put on $f$ and $g$. When there is no constraint on the contamination distribution $g$, the data generating process (\ref{eq:huber}) is also recognized as Huber's $\epsilon$-contamination model \citep{huber1964robust,huber1965robust}. Recent work on nonparametric estimation in such a setting includes \cite{chen2016general,gao2017robust}, and the influence of contamination on minimax rates is investigated by \cite{chen2015robust,chen2016general}. On the other hand, in the literature of multiple testing, it is more common to put parametric structural assumptions on the alternative $g$, and optimal rates of estimating the null density $f$ are investigated by \cite{jin2007estimating,cai2010optimal}.

In this paper, we explore this problem with connections to nonparametric density estimation literature in mind. Specifically, the density function $f$ is assumed to have a H\"{o}lder smoothness $\beta_0$. Both cases of structured and arbitrary contamination are considered and fundamental limit of this problem is studied by establishing minimax rate. In the structured contamination case, the contamination distribution $g$ is endowed with a $\beta_1$ H\"{o}lder smoothness, and the contamination level at the point $x_0$ is assumed to satisfy $g(x_0)\leq m$. The minimax rate of estimating $f(x_0)$ with respect to the squared error loss is shown to be of order
\begin{equation}
[n^{-\frac{2\beta_0}{2\beta_0+1}}]\vee[\epsilon^2(1\wedge m)^2]\vee[n^{-\frac{2\beta_1}{2\beta_1+1}}\epsilon^{\frac{2}{2\beta_1+1}}].\label{eq:minimax-stru}
\end{equation}
The minimax rate involves three terms, and the influence of contamination on estimation is precisely characterized. The first term $n^{-\frac{2\beta_0}{2\beta_0+1}}$ corresponds to the classical minimax rate of nonparametric estimation when there is no contamination. The second term $\epsilon^2(1\wedge m)^2$ is determined by contamination on $x_0$. It depends on both the contamination proportion $\epsilon$ and the contamination level $m$. The last term $n^{-\frac{2\beta_1}{2\beta_1+1}}\epsilon^{\frac{2}{2\beta_1+1}}$ is caused by contamination on the neighborhood of $x_0$, which is present even if the contamination level $m$ is zero. In the arbitrary contamination case, or equivalently under Huber's $\epsilon$-contamination model, the minimax rate is of order
\begin{equation}
[n^{-\frac{2\beta_0}{2\beta_0+1}}]\vee [\epsilon^{\frac{2\beta_0}{\beta_0+1}}].\label{eq:minimax-arb-con}
\end{equation}
Compared with (\ref{eq:minimax-stru}), the rate (\ref{eq:minimax-arb-con}) is easier to understand in terms of the influence of the contamination. It is interesting to note that even though $\beta_0$ is the smoothness index of $f$, it still appears on the second term in (\ref{eq:minimax-arb-con}). Thus, when the contamination is arbitrary, its influence on estimation is also determined by the smoothness of the target density.

We also thoroughly investigate the theory of adaptation in both settings of contamination models. Depending on specific settings, various adaptation costs are necessary. For the contamination model with structured contamination, when the contamination proportion is unknown, an optimal adaptive procedure can achieve the rate (\ref{eq:minimax-stru}) with $\epsilon^2(1\wedge m)^2$ replaced by $\epsilon^2$. When the smoothness is unknown, an optimal adaptive procedure can achieve the rate (\ref{eq:minimax-stru}) with $n$ replaced by $n/\log n$. Similarly, for the contamination model with arbitrary contamination, the rate (\ref{eq:minimax-arb-con}) can be achieved up to a logarithmic factor when either $\epsilon$ or $\beta_0$ is unknown. On the other hand, however, when both the contamination proportion and the smoothness are unknown, the adaptation theories are completely different for the two contamination models. For structured contamination, the adaptation cost is just the combination of the cost of unknown contamination proportion and that of unknown smoothness. In contrast, for arbitrary contamination, we show that adaptation is simply impossible when both $\epsilon$ and $\beta_0$ are unknown. In other words, it is impossible to adaptively achieve a rate of the form $n^{-r_1(\beta_0)}\vee \epsilon^{r_2(\beta_0)}$ with any two functions $r_1(\cdot)$ and $r_2(\cdot)$.
 
The theory of adaptation in nonparametric functional estimation without contamination is well studied in the literature.
It is shown by \cite{brown1996constrained,lepski1997optimal,cai2006optimal} that a logarithmic factor must be paid for estimating a point of a density function when smoothness is not known. Adaptation costs of estimating other nonparametric functionals have been investigated in \cite{lepskii1991problem,tribouley2000adaptive,johnstone2001chi,cai2003rates,cai2005adaptive}. Compared with the results in the literature, the presence of contamination brings extra complication to the problem of adaptation. It is remarkable that the adaptation cost depends very sensitively on each specific setting and contamination model. The new phenomena revealed in our paper for adaptation with contamination have not been discovered before.

The rest of the paper is organized as follows. The contamination model with structured contamination is studied in Section \ref{sec:ms} and Section \ref{sec:as}. Results of minimax rates and costs of adaptation are given in Section \ref{sec:ms} and Section \ref{sec:as}, respectively. The corresponding theory of contamination model with arbitrary contamination is investigated in Section \ref{sec:ma}. In Section \ref{sec:discussion}, we discuss extensions of our results to multivariate density estimation and a consistent procedure in the hardest scenario where adaptation is impossible. All proofs are given in Section \ref{sec:proof}.

We close this section by introducing notations that will be used later. For $a,b\in\mathbb{R}$, let $a\vee b=\max(a,b)$ and $a\wedge b=\min(a,b)$. For an integer $m$, $[m]$ denotes the set $\{1,2,...,m\}$.
For a positive real number $x$, $\ceil{x}$ is the smallest integer no smaller than $x$ and $\floor{x}$ is the largest integer no larger than $x$. For two positive sequences $\{a_n\}$ and $\{b_n\}$, we write $a_n\lesssim b_n$ or $a_n=O(b_n)$ if $a_n\leq Cb_n$ for all $n$ with some consntant $C>0$ independent of $n$. The notation $a_n\asymp b_n$ means we have both $a_n\lesssim b_n$ and $b_n\lesssim a_n$.
Given a set $S$, $|S|$ denotes its cardinality, and $\mathbbm{1}_S$ is the associated indicator function. We use $\mathbb{P}$ and $\mathbb{E}$ to denote generic probability and expectation whose distribution is determined from the context. The notation $\mathbb{E}(X:S)$ stands for $\mathbb{E}(X\mathbbm{1}_S)$.
The class of infinitely differentiable functions on $\mathbb R$ is denoted by $\mathcal{C}^{\infty}(\mathbb R)$.
For two probability measures $\mathbb{P}$ and $\mathbb{Q}$, the chi-squared divergence is defined as $\chi^2(\mathbb{P},\mathbb{Q})=\int \frac{d\mathbb{P}^2}{d\mathbb{Q}}-1$, and the total variation distance is defined as $\TV(\mathbb{P},\mathbb{Q})=\sup_B|\mathbb{P}(B)-\mathbb{Q}(B)|$. Throughout the paper, $C$, $c$ and their variants denote generic constants that do not depend on $n$. Their values may change from place to place.


\section{Minimax Rates with Structured Contamination}\label{sec:ms}

\subsection{Results and Implications}\label{sec:minimax-s}

Consider i.i.d. observations $X_1,...,X_n\sim (1-\epsilon)f+\epsilon g$.
The goal is to estimate $f$ at a given point. Without loss of generality, we aim to estimate $f(0)$.
In other words, for every $i\in[n]$, we have $X_i\sim f$ with probability $1-\epsilon$ and $X_i\sim g$ with probability $\epsilon$. Thus, there are approximately $n\epsilon$ observations that are not related to the density function $f$, which are referred to as contamination.

To study the fundamental limit of estimating $f$ with contaminated data, we need to specify appropriate regularity conditions on both $f$ and $g$.
We first define the H\"{o}lder class by
$$\Sigma(\beta,L)=\left\{f:\mathbb{R}\rightarrow\mathbb{R}\Bigg|\left|f^{(\floor{\beta})}(x_1)-f^{(\floor{\beta})}(x_2)\right|\leq L|x_1-x_2|^{\beta-\floor{\beta}}\text{ for any }x_1,x_2\in\mathbb{R}\right\}.$$
Here, $\beta$ stands for the smoothness parameter, and $L$ stands for the radius of the function space.
The H\"{o}lder class of density functions is defined as
$$\mathcal{P}(\beta,L)=\left\{f:\mathbb{R}\rightarrow[0,\infty)\Bigg| f\in\Sigma(\beta,L), \int f=1\right\}.$$
Finally, we define the class of mixtures in the form of $(1-\epsilon)f+\epsilon g$ by
$$\mathcal{M}(\epsilon,\beta_0,\beta_1,L_0,L_1,m)=\left\{(1-\epsilon)f+\epsilon g\Big| f\in \mathcal{P}(\beta_0,L_0), g\in \mathcal{P}(\beta_1,L_1), g(0)\leq m\right\}.$$
This class is indexed by several numbers. Throughout the paper, we refer to $\epsilon$ as contamination proportion and $m$ as contamination level at $0$. The pair $(\beta_0,L_0)$ controls the smoothness of the density function $f$ that we want to estimate, and the pair $(\beta_1,L_1)$ controls the smoothness of the contamination density $g$. Among the six numbers, $\epsilon$ and $m$ are allowed to depend on the sample size $n$, but the numbers $\beta_0,\beta_1,L_0,L_1$ are all assumed to be constants that do not depend on $n$ throughout the paper. It is also assumed that $\epsilon\leq 1/2$.

The minimax risk of estimation is defined as (notice that we suppress the dependence on $n$ for $\mathcal{R}$) 
$$\mathcal{R}(\epsilon,\beta_0,\beta_1,L_0,L_1,m)=\inf_{\wh f(0)}\sup_{p(\epsilon,f,g)\in \mathcal M(\epsilon,\beta_0,\beta_1,L_0,L_1,m)}\mathbb{E}_{X_1,\dots,X_n\sim p}\left(\wh f(0)-f(0)\right)^2,$$
where the notation $p(\epsilon,f,g)$ is used to denote the density $(1-\epsilon)f+\epsilon g$.
Later in the paper, we will shorthand $\mathbb{E}_{X_1,\dots,X_n\sim p}$ by $\mathbb{E}_{p^n}$.
Obviously, the minimax risk becomes smaller if $\epsilon$ gets smaller or $n$ gets larger. Besides the role of $\epsilon$ and $n$, the other model indices are also expected to affect the difficulty of the problem, as listed in the following.
\begin{itemize}
  \item The smoothness of $f$: From classical density estimation theory, we know the smoother $f$ is, the easier it is to estimate $f(0)$.
  \item The level of $g(0)$: Intuitively, the smaller $g(0)$ is, the smaller its influence is on $f(0)$, and thus the easier the problem is.
  \item The smoothness of $g$: Intuitively, the smoother $g$ is, the less the contamination effect can spread, and thus the easier it is to account for the effect of $g$ in the contamination model.
\end{itemize}
Now we present the following theorem of minimax rate, that justifies our intuition above.
\begin{thm}\label{thm:minimax-rate}
Under the setting above, we have
\begin{equation}
\mathcal{R}(\epsilon,\beta_0,\beta_1,L_0,L_1,m)\asymp [n^{-\frac{2\beta_0}{2\beta_0+1}}]\vee[\epsilon^2(1\wedge m)^2]\vee[n^{-\frac{2\beta_1}{2\beta_1+1}}\epsilon^{\frac{2}{2\beta_1+1}}].\label{eq:minimax-rate}
\end{equation}
In other words, $\mathcal{R}(\epsilon,\beta_0,\beta_1,L_0,L_1,m)$ can be upper and lower bounded by the right hand side of (\ref{eq:minimax-rate}) up to a constant that only depends on $\beta_0,\beta_1,L_0,L_1$.
\end{thm}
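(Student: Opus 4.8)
The plan is to establish the two matching bounds separately. The upper bound comes from a single kernel estimator, while the lower bound is obtained by pasting together three independent two-point (Le Cam) arguments, one responsible for each of the three terms on the right-hand side of \eqref{eq:minimax-rate}.

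For the upper bound I would use the debiased kernel estimator $\wh f(0)=\frac{1}{(1-\epsilon)nh}\sum_{i=1}^n K(X_i/h)$, where $K$ is a compactly supported kernel of order $\ell=\lceil\beta_0\vee\beta_1\rceil$ (so $\int K=1$ and $\int u^jK(u)\,du=0$ for $1\le j\le\ell$), $h$ is a bandwidth to be chosen, and $\epsilon$ is treated as known (adaptation is postponed to later sections). Writing $p=(1-\epsilon)f+\epsilon g$, the bias decomposes as $\Expect\wh f(0)-f(0)=\big(\tfrac1h\int K(\tfrac{x}{h})f(x)\,dx-f(0)\big)+\tfrac{\epsilon}{1-\epsilon}\cdot\tfrac1h\int K(\tfrac{x}{h})g(x)\,dx$. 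The first bracket is $O(L_0h^{\beta_0})$ by the order of $K$ and $f\in\Sigma(\beta_0,L_0)$, and the second term equals $\tfrac{\epsilon}{1-\epsilon}\big(g(0)+O(L_1h^{\beta_1})\big)$, which is $\lesssim\epsilon(1\wedge m)+\epsilon h^{\beta_1}$ because $\epsilon\le1/2$, $g(0)\le m$, and every $g\in\mathcal P(\beta_1,L_1)$ satisfies $g(0)\le C(\beta_1,L_1)$ (a Hölder density of large height at $0$ would carry too much mass near $0$). The same observation bounds $f(0)$ and $\sup_{|x|\le h}p(x)$ by constants, so $\Var(\wh f(0))\lesssim 1/(nh)$, and therefore $\Expect(\wh f(0)-f(0))^2\lesssim h^{2\beta_0}+\epsilon^2(1\wedge m)^2+\epsilon^2h^{2\beta_1}+1/(nh)$. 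Choosing $h=n^{-1/(2\beta_0+1)}\wedge(n\epsilon^2)^{-1/(2\beta_1+1)}$ and splitting into the cases where $h^{\beta_0}$ or $\epsilon h^{\beta_1}$ dominates shows this is of the order in \eqref{eq:minimax-rate}.

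For the lower bound, for each of the three terms I would construct mixtures $p_j=(1-\epsilon)f_j+\epsilon g_j\in\mathcal M(\epsilon,\beta_0,\beta_1,L_0,L_1,m)$, $j=0,1$, with $|f_0(0)-f_1(0)|$ of the targeted order and $\chi^2(p_0,p_1)\lesssim1/n$ (so $\TV(p_0^n,p_1^n)$ stays bounded away from $1$), and then conclude $\mathcal R\gtrsim|f_0(0)-f_1(0)|^2$ by Le Cam. Term $n^{-2\beta_0/(2\beta_0+1)}$: take $g_0=g_1$ a fixed smooth density with $g(0)=0$, and let $f_1-f_0$ be a mean-zero smooth bump at $0$ of width $h\asymp n^{-1/(2\beta_0+1)}$ and height $\asymp h^{\beta_0}$; this is the classical construction, untouched by contamination since $g$ is shared. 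Term $\epsilon^2(1\wedge m)^2$: set $f_1=f^*$, $f_0=f^*-\delta$, $g_1=g^*$, $g_0=g^*+\tfrac{1-\epsilon}{\epsilon}\delta$, where $\delta\ge0$ is a bump at $0$ of height $\asymp\epsilon(1\wedge m)$ with a flat, negligible negative tail making $\int\delta=0$; then $f_0,g_0$ remain valid, $g_0(0)=\tfrac{1-\epsilon}{\epsilon}\delta(0)\le m$, and $p_0=p_1$ \emph{exactly}, so the risk is at least $\tfrac14\delta(0)^2\asymp\epsilon^2(1\wedge m)^2$. Term $n^{-2\beta_1/(2\beta_1+1)}\epsilon^{2/(2\beta_1+1)}$ (here one may assume $n\epsilon^2\gtrsim1$, since otherwise this term is below the first): let $f_1-f_0=a\eta$ with $\eta$ a fixed $O(1)$-width smooth function, $\eta(0)=1$, $\int\eta=0$, and let $g_1-g_0=-\tfrac{1-\epsilon}{\epsilon}a\eta$ \emph{except} that on an interval $[-w,w]$ with $w\asymp(a/\epsilon)^{1/\beta_1}$ the perturbation of $g$ is smoothly tapered so that $g_0,g_1$ stay nonnegative, stay $\beta_1$-Hölder, and satisfy $g(0)\le m$; then $p_0=p_1$ outside $[-w,w]$ and $|p_0-p_1|\lesssim a$ on $[-w,w]$, so $\chi^2(p_0,p_1)\lesssim a^2w\asymp a^{2+1/\beta_1}\epsilon^{-1/\beta_1}$, and setting $n\chi^2\asymp1$ forces $a\asymp\epsilon^{1/(2\beta_1+1)}n^{-\beta_1/(2\beta_1+1)}$, i.e.\ $a^2$ of the claimed order.

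The crux is the third construction. One must taper the contamination perturbation near $0$ so that the resulting density is simultaneously nonnegative, in the $\beta_1$-Hölder ball of radius $L_1$, normalized to one, and compliant with $g(0)\le m$, while still keeping $\|p_0-p_1\|$ within the $\chi^2$ budget. The constraint $g(0)\le m$ bites hardest when $m$ is small (e.g.\ $m=0$), because then $g_0$ must vanish at $0$ and, when $\beta_1>1$, remain flat there to order $\beta_1$, so one must rely on $g_0$ being allowed to grow like $|x|^{\beta_1}$ just off $0$ to absorb the tapered bump; verifying nonnegativity of $g_0$ and $g_1$ on $[-w,w]$ is where the bookkeeping is heaviest. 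The scale $w\asymp(a/\epsilon)^{1/\beta_1}$ is exactly the one at which the Hölder constraint on $g$ is saturated, and balancing it against the $\chi^2$ cost $a^2w$ is what produces the exponent $\tfrac{2\beta_1}{2\beta_1+1}$ and the factor $\epsilon^{2/(2\beta_1+1)}$ in the third term.
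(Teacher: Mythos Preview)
Your proposal is correct and follows the same overall architecture as the paper: the upper bound via the $(1-\epsilon)^{-1}$-normalized kernel estimator with bandwidth $h=n^{-1/(2\beta_0+1)}\wedge(n\epsilon^2)^{-1/(2\beta_1+1)}$, and three separate Le~Cam two-point constructions for the lower bound. Your treatments of the upper bound and of the first two lower-bound terms coincide with the paper's Theorems~\ref{thm:upperbound} and~\ref{thm:smoothlowerbound} (Lemmas~\ref{lem:term1} and~\ref{lem:term2}) essentially verbatim.

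The one place where your route diverges is the third lower-bound term. The paper uses an explicit \emph{two-bandwidth} construction: it perturbs $f$ by a bump of width $h$ and height $\asymp\epsilon h^{\beta_0}$, and perturbs $g$ by the same bump (height $h^{\beta_0}$) plus a second, wider bump $-c_3\wt h^{\beta_1}b(x/\wt h)$, with the algebraic relation $h^{\beta_0}\asymp\wt h^{\beta_1}$ enforced so that $g(0)=0$ exactly. The mixture difference is then precisely $-c_3\epsilon\wt h^{\beta_1}b(x/\wt h)$, localized at scale $\wt h\asymp(n\epsilon^2)^{-1/(2\beta_1+1)}$. Your \emph{tapering} construction instead places a fixed-width bump $a\eta$ on $f$ and compensates it on $g$ everywhere except on $[-w,w]$, where the $g$-perturbation is damped to respect $g(0)\le m$; the mixture difference is then supported on $[-w,w]$ with $w\asymp(a/\epsilon)^{1/\beta_1}$. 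Both constructions land on the same localization scale $w=\wt h$ and the same separation $|f(0)-\wt f(0)|\asymp\epsilon\wt h^{\beta_1}$, so they are equivalent in effect. The paper's version has the virtue of being fully explicit via the auxiliary functions $l,a,b$ (Lemmas~\ref{lem:a} and~\ref{lem:b}), with $\int b=0$ handling normalization automatically; it does, however, require $\beta_1\le\beta_0$ so that the $h$-scale bump of height $h^{\beta_0}$ sits inside $\Sigma(\beta_1,L_1)$, a condition the paper checks is forced whenever the third term dominates. Your tapering approach avoids that side condition but, as you note, leaves more bookkeeping implicit (smooth matching at $\pm w$, nonnegativity of $g_0$ near zero when $m=0$, and restoring $\int g_j=1$ after tapering).
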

Theorem \ref{thm:minimax-rate} completely characterizes the difficulty of estimating $f(0)$ with contaminated data. The three terms in the rate (\ref{eq:minimax-rate}) have different but very clear meanings. The first term $n^{-\frac{2\beta_0}{2\beta_0+1}}$ is the classical minimax rate of estimating a smooth function at a given point without contamination. The second term $\epsilon^2(1\wedge m)^2$ is proportional to the squared of the product of contamination level and contamination proportion. The last term $n^{-\frac{2\beta_1}{2\beta_1+1}}\epsilon^{\frac{2}{2\beta_1+1}}$ is perhaps the most interesting. Here the effect of $\epsilon$ is powered by an exponent depending on $\beta_1$, and it stands for the interaction between the contamination proportion and the contamination smoothness. The fact that it does not depend on $m$ implies that we have to pay this price with contaminated data even if $g(0)=0$.

To further understand the implications of Theorem \ref{thm:minimax-rate}, we present the following illustrative special cases of the minimax rate (\ref{eq:minimax-rate}). First, when $\epsilon=0$, we get
$$\mathcal{R}(0,\beta_0,\beta_1,L_0,L_1,m)\asymp n^{-\frac{2\beta_0}{2\beta_0+1}}.$$
This is simply the classical minimax rate of estimating $f(0)$ without contamination.

Next, to understand the role of $m$, we consider two extreme cases of $m=0$ and $m=\infty$. From (\ref{eq:minimax-rate}), we have
$$\mathcal{R}(\epsilon,\beta_0,\beta_1,L_0,L_1,0)\asymp [n^{-\frac{2\beta_0}{2\beta_0+1}}]\vee[n^{-\frac{2\beta_1}{2\beta_1+1}}\epsilon^{\frac{2}{2\beta_1+1}}],$$
and
$$\mathcal{R}(\epsilon,\beta_0,\beta_1,L_0,L_1,\infty)\asymp [n^{-\frac{2\beta_0}{2\beta_0+1}}]\vee\epsilon^2.$$
The case of $m=0$ is particularly interesting. It implies $g(0)=0$, and one may expect that the contamination would have no influence on the minimax rate. This intuition is not true because of the term $n^{-\frac{2\beta_1}{2\beta_1+1}}\epsilon^{\frac{2}{2\beta_1+1}}$. Since nonparametric estimation of $f(0)$ also depends on the values of the density function at a neighborhood of $0$, the contamination from $g$ can still have an effect on the neighborhood of $0$ despite that $g(0)=0$. A smaller value of $\beta_1$ allows a greater perturbation by $g$ on the neighborhood of $0$. When $m=\infty$, the minimax rate has a simple form of $[n^{-\frac{2\beta_0}{2\beta_0+1}}]\vee\epsilon^2$. The influence on the minimax rate from contamination is always $\epsilon^2$, regardless of the smoothness $\beta_1$.

Finally, we consider the cases of $\beta_1=0$ and $\beta_1=\infty$. In fact, the H\"{o}lder class $\Sigma(\beta,L)$ with $\beta_1=\infty$ is not well defined, but the discussion below still holds for a sufficiently large constant $\beta_1$. From (\ref{eq:minimax-rate}), we have
$$\mathcal{R}(\epsilon,\beta_0,0,L_0,L_1,m)\asymp [n^{-\frac{2\beta_0}{2\beta_0+1}}]\vee\epsilon^2,$$
and
$$\mathcal{R}(\epsilon,\beta_0,\infty,L_0,L_1,m)\asymp [n^{-\frac{2\beta_0}{2\beta_0+1}}]\vee[\epsilon^2(1\wedge m)^2].$$
The influence of the contamination takes the forms of $\epsilon^2$ and $\epsilon^2(1\wedge m)^2$ for the two extreme cases. This immediately implies that for any values of $\epsilon,\beta_0,\beta_1,L_0,L_1,m$, we have
$$[n^{-\frac{2\beta_0}{2\beta_0+1}}]\vee[\epsilon^2(1\wedge m)^2]\lesssim \mathcal{R}(\epsilon,\beta_0,\beta_1,L_0,L_1,m)\lesssim [n^{-\frac{2\beta_0}{2\beta_0+1}}]\vee\epsilon^2.$$
In other words, the influence of contamination on the minimax rate is sandwiched between $m^2\epsilon^2$ and $\epsilon^2$.

\subsection{Upper Bounds}

The minimax rate (\ref{eq:minimax-rate}) can be achieved by a simple kernel density estimator that takes the form
\begin{equation}
\wh{f}_h(0)=\frac{1}{n(1-\epsilon)}\sum_{i=1}^n\frac{1}{h}K\left(\frac{X_i}{h}\right).\label{eq:def-KDE}
\end{equation}
This estimator is slightly different from the classical kernel density estimator because it is normalized by $\frac{1}{n(1-\epsilon)}$ instead of $\frac{1}{n}$. The knowledge of the contamination proportion $\epsilon$ is very critical to achieve the minimax rate (\ref{eq:minimax-rate}). Later, we will show in Section \ref{sec:un-epsilon} that the minimax rate (\ref{eq:minimax-rate}) cannot be achieved if $\epsilon$ is not known.

We introduce the following class of kernel functions.
\begin{eqnarray*}
\mathcal{K}_{l}(L) &=& \Bigg\{K:\mathbb{R}\rightarrow\mathbb{R}\Big| \int K=1, \int x^jK(x)dx=0 \text{ for all } j\in[l], \\
&& \quad\quad\|K\|_{\infty}\vee \int K^2 \vee \int|x|^l|K(x)|dx\leq L \Bigg\}.
\end{eqnarray*}
The class $\mathcal{K}_l(L)$ collects all bounded and squared integrable kernel functions of order $l$. The number $L>0$ is assumed to be a constant throughout the paper. We refer to \cite{devroyecombinatorial} for examples of kernel functions in the class $\mathcal{K}_l(L)$.

\begin{thm}\label{thm:upperbound}
For the estimator $\wh{f}(0)=\wh{f}_h(0)$ with some $K\in \mathcal{K}_{\left \lfloor{\beta_0\vee \beta_1}\right \rfloor}(L)$ and $h=n^{-\frac{1}{2\beta_0+1}}\wedge n^{-\frac{1}{2\beta_1+1}}\epsilon^{-\frac{2}{2\beta_1+1}}$, we have
$$\sup_{p(\epsilon,f,g)\in \mathcal M(\epsilon,\beta_0,\beta_1,L_0,L_1,m)}\mathbb{E}_{p^n}\left(\wh f(0)-f(0)\right)^2\lesssim [n^{-\frac{2\beta_0}{2\beta_0+1}}]\vee[\epsilon^2(1\wedge m)^2]\vee[n^{-\frac{2\beta_1}{2\beta_1+1}}\epsilon^{\frac{2}{2\beta_1+1}}].$$
\end{thm}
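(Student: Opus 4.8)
The plan is a routine bias--variance analysis of the kernel estimator $\wh f_h(0)$; the only non-standard features are that the ``bias'' now carries an extra contamination term and that the prescribed bandwidth is the minimum of two expressions, which forces a short two-regime bookkeeping at the end. First I would compute the expectation. Since each $X_i\sim p=(1-\epsilon)f+\epsilon g$,
\begin{equation*}
\Expect_{p^n}\wh f_h(0)=\frac{1}{1-\epsilon}\int\frac1h K\!\left(\frac{x}{h}\right)p(x)\,dx=\int K(u)f(hu)\,du+\frac{\epsilon}{1-\epsilon}\int K(u)g(hu)\,du,
\end{equation*}
so the bias decomposes as $B_f+B_g$ with $B_f=\int K(u)[f(hu)-f(0)]\,du$ and $B_g=\frac{\epsilon}{1-\epsilon}\int K(u)g(hu)\,du$. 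For $B_f$ I would run the standard kernel bias computation: expand $f$ in a Taylor polynomial of degree $\floor{\beta_0}$ around $0$, note that the moment conditions $\int u^jK(u)\,du=0$ for $1\le j\le\floor{\beta_0\vee\beta_1}$ annihilate the polynomial part except the constant term $f(0)$, and control the remainder using $f\in\Sigma(\beta_0,L_0)$ and the integrability built into $\mathcal{K}_{\floor{\beta_0\vee\beta_1}}(L)$, obtaining $|B_f|\lesssim h^{\beta_0}$.

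For $B_g$ I would use two bounds in tandem. The crude one is $\big|\int K(u)g(hu)\,du\big|\le\|g\|_\infty\int|K|\lesssim1$, using that a density in a H\"older class is uniformly bounded by a constant depending only on $(\beta_1,L_1)$, and that $\|K\|_\infty$ together with $\int|x|^{\floor{\beta_0\vee\beta_1}}|K(x)|\,dx$ bounds $\int|K|$. The refined one is $\big|\int K(u)g(hu)\,du-g(0)\big|\lesssim h^{\beta_1}$, again from the Taylor argument applied to $g\in\Sigma(\beta_1,L_1)$, combined with the constraint $g(0)\le m$. Together these give $\big|\int K(u)g(hu)\,du\big|\lesssim(1\wedge m)+h^{\beta_1}$, hence, using $\epsilon\le1/2$, $|B_g|\lesssim\epsilon(1\wedge m)+\epsilon h^{\beta_1}$. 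For the variance I would bound it by the second moment,
\begin{equation*}
\Var\!\big(\wh f_h(0)\big)\le\frac{1}{n(1-\epsilon)^2h^2}\Expect_{p}K\!\left(\frac{X}{h}\right)^2=\frac{1}{n(1-\epsilon)^2h}\int K(u)^2p(hu)\,du\lesssim\frac{1}{nh},
\end{equation*}
using $\|p\|_\infty\lesssim1$ and $\int K^2\le L$. Collecting the pieces yields
\begin{equation*}
\Expect_{p^n}\big(\wh f_h(0)-f(0)\big)^2\ \lesssim\ \frac{1}{nh}+h^{2\beta_0}+\epsilon^2(1\wedge m)^2+\epsilon^2h^{2\beta_1}.
\end{equation*}

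Finally I would substitute $h=n^{-\frac{1}{2\beta_0+1}}\wedge n^{-\frac{1}{2\beta_1+1}}\epsilon^{-\frac{2}{2\beta_1+1}}$ and check each term against the target. Since $h\le n^{-\frac{1}{2\beta_0+1}}$ always, $h^{2\beta_0}\le n^{-\frac{2\beta_0}{2\beta_0+1}}$; since $h\le n^{-\frac{1}{2\beta_1+1}}\epsilon^{-\frac{2}{2\beta_1+1}}$ always, $\epsilon^2h^{2\beta_1}\le n^{-\frac{2\beta_1}{2\beta_1+1}}\epsilon^{\frac{2}{2\beta_1+1}}$; the variance term $\frac1{nh}$ equals $n^{-\frac{2\beta_0}{2\beta_0+1}}$ in the regime where the first expression realizes the minimum and equals $n^{-\frac{2\beta_1}{2\beta_1+1}}\epsilon^{\frac{2}{2\beta_1+1}}$ in the other, so it is always bounded by the right-hand side; and $\epsilon^2(1\wedge m)^2$ is already one of the three targeted terms. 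This closes the bound. I do not expect a genuine obstacle here, since this is the ``easy'' direction of the minimax theorem; the step needing the most care is the treatment of $B_g$, where one must combine the boundedness of a H\"older density, the vanishing moments of $K$ up to order $\floor{\beta_1}$, and the level constraint $g(0)\le m$ to land the contamination contribution $\epsilon^2(1\wedge m)^2+\epsilon^2h^{2\beta_1}$ with the correct dependence on $m$, $\beta_1$ and $h$ simultaneously; the two-regime substitution at the end is purely algebraic.
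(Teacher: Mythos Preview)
Your proposal is correct and follows essentially the same route as the paper: the paper uses the three-term decomposition $(\wh f-\Expect\wh f)+\bigl(\Expect\wh f-f(0)-\tfrac{\epsilon}{1-\epsilon}g(0)\bigr)+\tfrac{\epsilon}{1-\epsilon}g(0)$ and bounds these by $\tfrac{1}{nh}$, $h^{\beta_0}+\epsilon h^{\beta_1}$, and $\epsilon(m\wedge1)$ respectively, then substitutes the prescribed bandwidth. Your organization merely folds the last two deterministic pieces into a single $B_g$ and then re-separates them via the refined-plus-crude bound on $\int K(u)g(hu)\,du$, which is a cosmetic regrouping of the same argument; your explicit two-regime check for $\tfrac{1}{nh}$ is also exactly what the paper leaves implicit in the line ``by the specific choice of $h$.''
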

Theorem \ref{thm:upperbound} reveals an interesting choice of the bandwidth
$h=n^{-\frac{1}{2\beta_0+1}}\wedge n^{-\frac{1}{2\beta_1+1}}\epsilon^{-\frac{2}{2\beta_1+1}}$.
Compared with the optimal bandwidth of order $n^{-\frac{1}{2\beta_0+1}}$ in classical nonparametric function estimation, the $h$ in the structured contamination setting is always smaller. The choice of bandwidth is a consequences of the specific bias-variance tradeoff under the structured contamination model. As an interesting contrast, in the case of arbitrary contamination, the optimal choice of bandwidth is always larger than the usual one, see Section \ref{sec:ma}.

The error bound in Theorem \ref{thm:upperbound} can be found through a classical bias-variance tradeoff argument. We can decompose the difference $\wh{f}(0)-f(0)$ as
\begin{equation}
(\wh{f}(0)-\mathbb{E}\wh{f}(0))+\left(\mathbb{E}\wh{f}(0)-f(0)-\frac{\epsilon}{1-\epsilon}g(0)\right)+\frac{\epsilon}{1-\epsilon}g(0).\label{eq:error-decomp}
\end{equation}
Here, the first term is the stochastic error. The second term gives the approximation error of the kernel convolution. The last term is caused by the contamination at $0$. Direct analysis of the three terms gives the bound
\begin{equation}
\mathbb{E}\left(\wh f(0)-f(0)\right)^2\lesssim \frac{1}{nh}+(h^{2\beta_0}+\epsilon^2h^{2\beta_1})+\epsilon^2(m\wedge 1)^2.\label{eq:RD-s}
\end{equation}
Now with the choice $h=n^{-\frac{1}{2\beta_0+1}}\wedge n^{-\frac{1}{2\beta_1+1}}\epsilon^{-\frac{2}{2\beta_1+1}}$, we obtain the error bound in Theorem \ref{thm:upperbound}. For detailed derivation, see the proof of Theorem \ref{thm:upperbound} in Section \ref{sec:pf-upper-structure}.

\subsection{Lower Bounds}

In this section, we study the lower bound part of the minimax rate (\ref{eq:minimax-rate}). We first state a theorem.
\begin{thm}\label{thm:smoothlowerbound}
We have
$$
\mathcal R(\epsilon,\beta_0,\beta_1,L_0,L_1,m)\gtrsim [n^{-\frac{2\beta_0}{2\beta_0+1}}]\vee[\epsilon^2(1\wedge m)^2]\vee[n^{-\frac{2\beta_1}{2\beta_1+1}}\epsilon^{\frac{2}{2\beta_1+1}}].
$$
\end{thm}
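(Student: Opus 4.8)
The plan is to prove the three lower bounds
$\mathcal{R}\gtrsim n^{-\frac{2\beta_0}{2\beta_0+1}}$, $\mathcal{R}\gtrsim \epsilon^2(1\wedge m)^2$, and $\mathcal{R}\gtrsim n^{-\frac{2\beta_1}{2\beta_1+1}}\epsilon^{\frac{2}{2\beta_1+1}}$ separately, since a maximum of three quantities is bounded below by each of them. Each bound comes from a version of Le Cam's two-point method: I will exhibit two mixtures $p_0=(1-\epsilon)f_0+\epsilon g_0$ and $p_1=(1-\epsilon)f_1+\epsilon g_1$ lying in $\mathcal{M}(\epsilon,\beta_0,\beta_1,L_0,L_1,m)$ with $|f_0(0)-f_1(0)|$ of the target order, and with $\chi^2(p_1,p_0)$ small enough that $\chi^2(p_1^{n},p_0^{n})\le (1+\chi^2(p_1,p_0))^n-1=O(1)$, so that $\TV(p_0^{n},p_1^{n})$ is bounded away from $1$; the two-point inequality then gives the stated rate.

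The first bound is the classical one and the contamination plays essentially no role. Fix a smooth density $g_0=g_1=g\in\mathcal{P}(\beta_1,L_1)$ with $g(0)=0$ (so $g(0)\le m$ automatically), fix a smooth density $f_0$ bounded below near $0$, and set $f_1=f_0+\Delta\,K(\cdot/h)$ where $K$ is a fixed smooth compactly supported bump with $K(0)=1$, $\int K=0$, $h=n^{-\frac{1}{2\beta_0+1}}$, and $\Delta\asymp h^{\beta_0}$ with a small constant. Since $p_0\gtrsim 1$ on the support of $K(\cdot/h)$ one gets $\chi^2(p_1,p_0)\lesssim \Delta^2 h$, hence $\chi^2(p_1^{n},p_0^{n})\lesssim n\Delta^2h\lesssim 1$, while $|f_0(0)-f_1(0)|=\Delta\asymp n^{-\frac{\beta_0}{2\beta_0+1}}$. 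The second bound uses a \emph{non-identifiable} pair. Choose $g_0,g_1\in\mathcal{P}(\beta_1,L_1)$ (taken $\mathcal{C}^\infty$, with sufficiently spread-out bumps) with $g_1(0)=0$ and $g_0(0)=g_0(0)-g_1(0)\asymp 1\wedge m$; here the truncation $1\wedge m$ rather than $m$ enters because a $\beta_1$-H\"older density cannot exceed a constant depending on $(\beta_1,L_1)$ at a point. Set $f_1-f_0=\tfrac{\epsilon}{1-\epsilon}(g_0-g_1)$; since $\epsilon\le 1/2$ and $g_0-g_1$ is smooth with small norm, both $f_0,f_1\in\mathcal{P}(\beta_0,L_0)$ for $f_0$ a fixed smooth density bounded below near the relevant support, yet $p_0=p_1$ identically, so $\TV(p_0^{n},p_1^{n})=0$, while $|f_0(0)-f_1(0)|=\tfrac{\epsilon}{1-\epsilon}|g_0(0)-g_1(0)|\asymp\epsilon(1\wedge m)$.

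The third bound is the heart of the matter and I expect the main obstacle to be the simultaneous verification of all class constraints there. The idea is that the contamination can cancel a perturbation of $f$ everywhere except in a tiny window around $0$, where the H\"older smoothness of $g$ (together with $g(0)\le m$) forbids cancellation. Let $\psi$ be a fixed smooth bump with $\psi(0)=1$ and $\int\psi=0$, set $f_1=f_0+\Delta\,\psi(\cdot/W)$ with $f_0$ fixed, smooth and bounded below near $0$, pick a radius $\rho\asymp(\Delta/\epsilon)^{1/\beta_1}$, and let $g_1=g_0-\tfrac{1-\epsilon}{\epsilon}\Delta\,\xi$ where $\xi$ agrees with $\psi(\cdot/W)$ outside $[-2\rho,2\rho]$, is identically $0$ on $[-\rho,\rho]$, and transitions smoothly on $\rho\le|x|\le 2\rho$. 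Then $p_1-p_0=(1-\epsilon)\Delta(\psi(\cdot/W)-\xi)$ is supported on $[-2\rho,2\rho]$ with size $\lesssim\Delta$, so $\chi^2(p_1,p_0)\lesssim\Delta^2\rho$ and $\chi^2(p_1^{n},p_0^{n})\lesssim n\Delta^2(\Delta/\epsilon)^{1/\beta_1}$, which is $O(1)$ exactly when $\Delta\asymp n^{-\frac{\beta_1}{2\beta_1+1}}\epsilon^{\frac{1}{2\beta_1+1}}$, yielding the claimed rate since $f_1(0)-f_0(0)=\Delta$.

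The delicate points in this last construction, which I expect to absorb most of the work, are the following. First, $g_0$ must be chosen as a ``volcano'': $g_0(0)=0$ (hence $\le m$ for any $m$) but $g_0$ growing like $|x|^{\beta_1}$ near the origin, so that $g_1\ge 0$ and, crucially, $g_1\in\mathcal{P}(\beta_1,L_1)$ — the H\"older-$\beta_1$ seminorm of $\xi$ across its inner transition is of order $\rho^{-\beta_1}$, and matching $\tfrac{\Delta}{\epsilon}\rho^{-\beta_1}$ to the budget $L_1$ is precisely what forces $\rho\asymp(\Delta/\epsilon)^{1/\beta_1}$. Second, the width $W$ of the $f$-perturbation must be chosen so that $f_1\in\mathcal{P}(\beta_0,L_0)$ while the measure of $\mathrm{supp}(p_1-p_0)$ is still of order $\rho$ and not $W$: one takes $W\asymp\rho$ when $\beta_0\le\beta_1$ and $W\asymp\Delta^{1/\beta_0}$ when $\beta_0>\beta_1$, the point being that the contamination already cancels the perturbation on the annulus $\rho\le|x|\le 2W$. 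A short computation shows these choices are feasible exactly in the parameter regime in which the third term dominates the first two; in the complementary regime the desired lower bound is already implied by the first two bounds. Combining the three two-point arguments via Le Cam's inequality gives the theorem.
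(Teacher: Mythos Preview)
Your proposal is correct. For the first two terms it matches the paper's proof essentially line for line: the classical bump perturbation for $n^{-\frac{2\beta_0}{2\beta_0+1}}$ and the exactly non-identifiable pair $(1-\epsilon)f+\epsilon g=(1-\epsilon)\wt f+\epsilon\wt g$ for $\epsilon^2(1\wedge m)^2$.

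For the third term your construction is conceptually equivalent but differently executed. The paper uses an explicit two-bandwidth perturbation: it perturbs $\wt f$ by a bump of height $\frac{\epsilon}{1-\epsilon}h^{\beta_0}$ and width $h$, places the \emph{same} bump on $g$ (so it cancels identically in $p-\wt p$), and then adds to $g$ a second bump $-c_3\wt h^{\beta_1}b(\cdot/\wt h)$ at a smaller scale $\wt h$, calibrated via $h^{\beta_0}\asymp\wt h^{\beta_1}$ so that $g(0)=0$. The only surviving term in $p-\wt p$ is then $-\epsilon c_3\wt h^{\beta_1}b(\cdot/\wt h)$, supported on $[-\wt h,\wt h]$, which forces $\wt h\asymp(n\epsilon^2)^{-1/(2\beta_1+1)}$ and yields the rate. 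Your cutoff $\xi$ plays exactly the role of the paper's cancelling first-scale bump, and your residual $\psi(\cdot/W)-\xi$ plays the role of the paper's second-scale bump $b(\cdot/\wt h)$; indeed your $\rho$ coincides with the paper's $\wt h$. The trade-off is that the paper's explicit $\mathcal C^\infty$ building blocks $a,b,l$ make the verification of $g\ge 0$ and $g\in\Sigma(\beta_1,L_1)$ mechanical (the base density $a$ is flat at $0$, and the positive first-scale bump supplies the room for the negative second-scale one), whereas your ``volcano'' $g_0$ together with a smooth cutoff is a bit more delicate to make rigorous---in particular the simultaneous requirements $g_0\in\mathcal P(\beta_1,L_1)$, $g_0(0)=0$, and $g_0(x)\gtrsim|x|^{\beta_1}$ near the origin need some care in the choice of $g_0$ (notably when $\beta_1$ is an integer). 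Both approaches correctly restrict attention to the regime $\beta_1\le\beta_0$, $n\epsilon^2\ge 1$ in which the third term dominates the first.
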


The first term $n^{-\frac{2\beta_0}{2\beta_0+1}}$ is the classical minimax lower bound for nonparametric estimation. Thus, we will only give here a overview of how to derive the second and the third terms.
Two specific functions are used as building blocks for our construction, and their definitions and properties are summarized in the following two lemmas.
\begin{lemma}\label{lem:a}
Let $l(x)=e^{-\frac{1}{1-x^2}}\mathbbm{1}_{\{|x|\leq 1\}}$. Define
$$a(x)=\begin{cases}
c_0l(x+1), & -2\leq x\leq 0, \\
c_0l(x-1), & 0\leq x\leq 2.
\end{cases}$$
The constant $c_0$ is chosen so that $\int a=1$. It satisfies the following properties:
\begin{enumerate}
  \item $a$ is an even density function compactly supported on $[-2,2]$.
  \item $a(0)=0$.
  \item For any constants $\beta,L>0$, there exists a constant $c>0$, such that $ca(cx)\in \mathcal{P}(\beta,L)\cap \mathcal{C}^{\infty}(\mathbb R)$.
  \item For any small constant $c>0$, $a$ is uniformly lower bounded by a positive constant on $[-1,-c]\cup [c,1]$, and it is uniformly upper bounded by a positive constant on $\mathbb{R}$.
\end{enumerate}
\end{lemma}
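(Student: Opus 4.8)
The plan is to dispatch the four properties in order; only property~3 requires any real argument, and there the whole point is that $l$ is $\mathcal{C}^\infty$ on all of $\mathbb R$ with all derivatives vanishing at $\pm1$, so the two bumps glue together seamlessly at the origin.

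First I would record the standard facts about the mollifier $l(x)=e^{-1/(1-x^2)}\mathbbm{1}_{\{|x|\le1\}}$: it is even, satisfies $0\le l\le1$, is supported exactly on $[-1,1]$, lies in $\mathcal{C}^\infty(\mathbb R)$, and has $l^{(k)}(\pm1)=0$ for all $k\ge0$. Properties~1 and~2 then follow by inspection. The summand $c_0l(x+1)$ is supported on $[-2,0]$ and $c_0l(x-1)$ on $[0,2]$, so $a$ is supported on $[-2,2]$; $a\ge0$ since $l\ge0$, and the normalization $\int a=1$ together with $\int_{-2}^{0}l(x+1)\,dx=\int_{0}^{2}l(x-1)\,dx=\int_{-1}^{1}l$ forces $c_0=(2\int_{-1}^{1}l)^{-1}>0$; $a$ is even because $l(-x+1)=l(x-1)$; and $a(0)=c_0l(1)=c_0l(-1)=0$.

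For property~3, the key observation is that $c_0l(x+1)$, viewed as a function on all of $\mathbb R$, is $\mathcal{C}^\infty$ and vanishes on $[0,\infty)$ (since $l(1)=0$ and $l\equiv0$ to the right of $1$), while $c_0l(x-1)$ is $\mathcal{C}^\infty$ and vanishes on $(-\infty,0]$; hence $a$ is literally the sum of these two globally $\mathcal{C}^\infty$ functions, so $a\in\mathcal{C}^\infty(\mathbb R)$. Being $\mathcal{C}^\infty$ with compact support, $a^{(\floor{\beta})}$ is globally Lipschitz, hence $(\beta-\floor{\beta})$-H\"older on $\mathbb R$ with some finite constant $L_a=L_a(\beta)$, so $a\in\mathcal{P}(\beta,L_a)\cap\mathcal{C}^\infty(\mathbb R)$. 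Finally I would rescale: for $a_c(x):=c\,a(cx)$ the substitution $u=cx$ gives $\int a_c=1$, nonnegativity is preserved, and $a_c^{(\floor{\beta})}(x)=c^{\floor{\beta}+1}a^{(\floor{\beta})}(cx)$ shows the H\"older seminorm of $a_c$ is at most $c^{\floor{\beta}+1}\cdot c^{\beta-\floor{\beta}}L_a=c^{\beta+1}L_a$; choosing $c>0$ small enough that $c^{\beta+1}L_a\le L$ gives $ca(cx)\in\mathcal{P}(\beta,L)\cap\mathcal{C}^\infty(\mathbb R)$, as claimed.

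For property~4, on $[c,1]$ the argument $x-1$ runs through the compact interval $[c-1,0]\subset(-1,1)$, on which $l$ is continuous and strictly positive and therefore bounded below by a positive constant; evenness of $a$ extends this to $[-1,-c]$, and the uniform upper bound on $\mathbb R$ is simply $a\le c_0$ from $l\le1$. The only step where anything could go wrong is the smooth gluing at $0$ in property~3, and that is handled cleanly by writing $a$ as a sum of two shifts of $c_0l$; everything else is routine bookkeeping.
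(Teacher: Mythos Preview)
Your proof is correct. The paper itself does not supply a proof of this lemma; it is stated as a collection of standard facts about the mollifier $l$ and left to the reader, so there is nothing to compare against beyond noting that your argument fills in exactly the routine verifications the authors took for granted.

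One small remark on property~3: the implication ``globally Lipschitz, hence $(\beta-\floor{\beta})$-H\"older on $\mathbb R$'' is not true in general for unbounded functions when $|x_1-x_2|>1$, but it \emph{is} true here because $a^{(\floor{\beta})}$ is also bounded (compact support), so for large separations one simply uses $|a^{(\floor{\beta})}(x_1)-a^{(\floor{\beta})}(x_2)|\le 2\|a^{(\floor{\beta})}\|_\infty\le 2\|a^{(\floor{\beta})}\|_\infty\,|x_1-x_2|^{\beta-\floor{\beta}}$. You clearly intended this since you invoked compact support, but it may be worth making the two-case split explicit. Everything else---the gluing at the origin via the sum representation, the rescaling computation $c^{\beta+1}L_a$, and the lower bound on $[c,1]$ via compactness---is clean.
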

\begin{lemma}\label{lem:b}
Let $l(x)=e^{-\frac{1}{1-x^2}}\mathbbm{1}_{\{|x|\leq 1\}}$. Define
$$b(x)=\begin{cases}
-l\left(4x+3\right), & -1\leq x\leq -\frac{1}{2}, \\
l(2x), & |x|\leq \frac{1}{2},\\
-l\left(4x-3\right), & \frac{1}{2}\leq x\leq 1.
\end{cases}$$
It satisfies the following properties:
\begin{enumerate}
  \item $b$ is an even function compactly supported on $[-1,1]$.
  \item For any $\beta,L>0$, there exists a constant $c>0$ such that $cb\in \Sigma(\beta,L)\cap \mathcal{C}^{\infty}(\mathbb R)$.
  \item $b$ is uniformly lower bounded by a positive constant on $[-\frac{1}{4},\frac{1}{4}]$, and $|b|$ is uniformly upper bounded by a positive  constant on $\mathbb{R}$.
  \item $\int b=0$.
\end{enumerate}
\end{lemma}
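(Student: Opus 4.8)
The plan is to verify the four listed properties essentially by direct inspection, the only genuinely substantive point being the $\mathcal{C}^\infty$ gluing of the piecewise definition. The starting observation is that $l(x)=e^{-1/(1-x^2)}\mathbbm{1}_{\{|x|\le 1\}}$ is the classical mollifier: it is an even, nonnegative, $\mathcal{C}^\infty(\mathbb{R})$ function supported on $[-1,1]$, all of whose derivatives vanish at $x=\pm 1$, with $0<l(x)\le l(0)=e^{-1}$ on $(-1,1)$. Hence each affine reparametrization occurring in the definition of $b$, namely $x\mapsto l(4x+3)$, $x\mapsto l(2x)$, $x\mapsto l(4x-3)$, is $\mathcal{C}^\infty(\mathbb{R})$, is supported on $[-1,-\tfrac12]$, $[-\tfrac12,\tfrac12]$, $[\tfrac12,1]$ respectively, and vanishes to infinite order at the endpoints of its support interval.

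Granting this, Property 1 follows because the consecutive pieces of $b$ agree---together with all of their derivatives, all of them equal to $0$---at the common junction points $x=-\tfrac12$ and $x=\tfrac12$, and the outer pieces vanish to infinite order at $x=\pm1$; so the piecewise formula defines a single $\mathcal{C}^\infty(\mathbb{R})$ function supported on $[-1,1]$, and evenness is checked by inspecting the three cases (using that $l$ is even: for $x\in[\tfrac12,1]$ one has $b(-x)=-l(-4x+3)=-l(4x-3)=b(x)$, and the middle piece is even by itself). Property 2 I would then deduce as a soft consequence of smoothness and compact support: $b^{(\floor{\beta}+1)}$ is continuous with compact support, hence bounded, so $b^{(\floor{\beta})}$ is globally Lipschitz and bounded; splitting the pair $x_1,x_2$ into the cases $|x_1-x_2|\le1$ and $|x_1-x_2|>1$ and using $|x_1-x_2|\le|x_1-x_2|^{\beta-\floor{\beta}}$ in the first range shows $b\in\Sigma(\beta,L')$ for a constant $L'=L'(\beta)$; finally $cb\in\Sigma(\beta,cL')\cap\mathcal{C}^\infty(\mathbb{R})$, so any $c\le L/L'$ works.

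Properties 3 and 4 are pure calculation. On $[-\tfrac14,\tfrac14]$ we have $b(x)=l(2x)$ with $|2x|\le\tfrac12$, so $b(x)=e^{-1/(1-4x^2)}\ge e^{-4/3}>0$; and $|b|\le e^{-1}$ everywhere since every piece is $\pm l$ of an affine argument and $b\equiv0$ off $[-1,1]$. For the zero-mean property, writing $I=\int_{-1}^1 l$, the substitutions $u=4x+3$, $u=2x$, $u=4x-3$ give $\int_{-1}^{-1/2}l(4x+3)\,dx=\tfrac14 I$, $\int_{-1/2}^{1/2}l(2x)\,dx=\tfrac12 I$, and $\int_{1/2}^{1}l(4x-3)\,dx=\tfrac14 I$, whence $\int b=-\tfrac14 I+\tfrac12 I-\tfrac14 I=0$. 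The one place requiring any care is the infinite-order vanishing of $l$ at $\pm1$, which is exactly what makes the smooth gluing in Property 1 (and hence everything downstream) go through; the remaining steps are routine.
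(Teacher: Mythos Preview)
Your proof is correct. The paper does not actually supply a proof of this lemma: it simply states the properties of $b$ as elementary consequences of the construction from the mollifier $l$, and then uses them in the lower bound constructions. Your direct verification fills in exactly what the paper leaves implicit, and each step---the infinite-order vanishing of $l$ at $\pm1$ giving the smooth gluing, the soft argument for $\Sigma(\beta,L)$ via boundedness of $b^{(\floor{\beta}+1)}$ on compact support, the explicit constants for Property~3, and the substitution computation for $\int b=0$---is carried out correctly.
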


Both the proofs of the second and the third terms in the lower bound involve careful constructions of two pairs of densities $(f,g)$ and $(\wt{f},\wt{g})$. In order to show $\mathcal{R}(\epsilon,\beta_0,\beta_1,L_0,L_1,m)\gtrsim \epsilon^2(1\wedge m)^2$, we consider the following constructions,
\begin{align*}
f(x)&=f_0(x),\\
\wt f(x) &=f_0(x)+c_1\frac{\epsilon}{1-\epsilon}(m\wedge 1)b(x),\\
g(x)&=c_2a(c_2x)+c_1(m\wedge 1)b(x),\\
\wt g(x)&=c_2a(c_2x).
\end{align*}
Here, the constants $c_1,c_2$ are chosen so that the constructed functions $f,\wt{f},g,\wt{g}$ are well-defined densities in the desired parameter spaces. It is easy to check that with the above construction,
$$(1-\epsilon)f+\epsilon g=(1-\epsilon)\wt{f}+\epsilon \wt{g}.$$
This implies that with the presence of contamination, an estimator $\wh{f}(0)$ cannot distinguish between the two data generating processes $(1-\epsilon)f+\epsilon g$ and $(1-\epsilon)\wt{f}+\epsilon \wt{g}$. As a consequence, an error of order $|f(0)-\wt{f}(0)|^2\asymp \epsilon^2(1\wedge m)^2$ cannot be avoided.

The derivation of the lower bound $\mathcal{R}(\epsilon,\beta_0,\beta_1,L_0,L_1,m)\gtrsim n^{-\frac{2\beta_1}{2\beta_1+1}}\epsilon^{\frac{2}{2\beta_1+1}}$ is more intricate. Consider the following four functions,
\begin{align*}
f(x) &=f_0(x),\\
\wt f(x) &=f_0(x)+\frac{\epsilon}{1-\epsilon}{\color{blue}c_{2}\left[h^{\beta_0}l\left(\frac{x}{h}\right)-h^{\beta_0}l\left(\frac{2(x-c_{4})}{h}\right)-h^{\beta_0}l\left(\frac{2(x+c_{4})}{h}\right)\right]},\\
g(x)&=c_{1}a(c_{1}x){+\color{blue}c_{2}\left[h^{\beta_0}l\left(\frac{x}{h}\right)-h^{\beta_0}l\left(\frac{2(x-c_{4})}{h}\right)-h^{\beta_0}l\left(\frac{2(x+c_{4})}{h}\right)\right]}{-\color{red}c_{3}\wt h^{\beta_1}b\left(\frac{x}{\wt h}\right)},\\
\wt g(x)&=c_{1}a(c_{1}x),
\end{align*}
where the definitions of the functions $l,a,b$ are given in Lemma \ref{lem:a} and Lemma \ref{lem:b}. Again, the constants $c_1,c_2,c_3,c_4$ are chosen properly so that the constructed functions are well-defined densities in the desired function classes.

A dominant feature of this constructions is that $g$ is a perturbation of $\wt{g}$ with two levels of perturbation, respectively with bandwidth $h$ and $\wt{h}$, while usual lower bound proof in nonparametric estimation involves perturbing a function at a single bandwidth level. The first level of perturbation $h^{\beta_0}l\left(\frac{x}{h}\right)$ serves to cancel the effect of the corresponding perturbation on $f$, while the second perturbation $-\wt{h}^{\beta_1}b\left(\frac{x}{\wt h}\right)$ serves to ensure the constraint of contamination level. Indeed, if we relate $h$ and $\wt{h}$ through the equation $h^{\beta_0}\asymp \wt{h}^{\beta_1}$, then it is direct that $\wt{g}(0)=g(0)=0$. In other words, the constructed contamination density functions $g$ and $\wt{g}$ both have contamination level $0$. An illustration of this construction with a two-level perturbation is given by Figure \ref{fig:hyl}.
\begin{figure}
\caption{An illustration of the construction of $g$.}\label{fig:hyl}
\centering
\includegraphics[width=7.5cm,height=5cm]{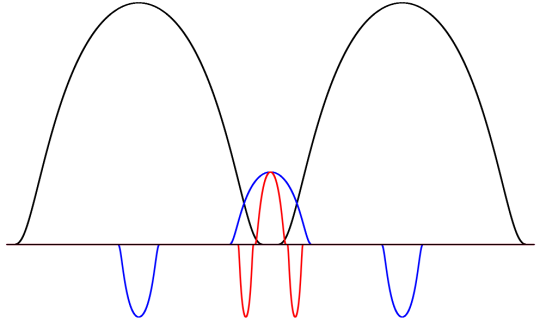}
\end{figure}
The colors of the plot correspond to those in the formulas.

With the above construction, it is not hard to check that
$$p(\epsilon,f,g)-p(\epsilon,\wt{f},\wt{g})=-c_{3}\epsilon\wt h^{\beta_1}b\left(\frac{x}{\wt h}\right).$$
In order that an estimator cannot distinguish between the two densities $p(\epsilon,f,g)=(1-\epsilon)f+\epsilon g$ and $p(\epsilon,\wt{f},\wt{g})=(1-\epsilon)\wt{f}+\epsilon \wt g$, a sufficient condition is $\chi^2\left(p(\epsilon,\wt{f},\wt{g}),p(\epsilon,f,g)\right)\lesssim n^{-1}$ (see Lemma \ref{lem:lowerbound}), which leads to the choice of $\wt{h}$ at the order $\wt{h}\asymp\left(n\epsilon^2\right)^{-\frac{1}{2\beta_1+1}}$. As a consequence, an error of order
$$|f(0)-\wt{f}(0)|^2\asymp \epsilon^2 h^{2\beta_0}\asymp \epsilon^2 \wt{h}^{2{\beta}_1}\asymp\epsilon^{\frac{2}{2\beta_1+1}}n^{-\frac{2\beta_1}{2\beta_1+1}}$$
cannot be avoided. A rigorous proof of Theorem \ref{thm:smoothlowerbound} will be given in Section \ref{sec:pf-lower-structure}.

\section{Adaptation Theory with Structured Contamination}\label{sec:as}

\subsection{Summary of Results}

To achieve the minimax rate in Theorem \ref{thm:minimax-rate}, the kernel density estimator (\ref{eq:def-KDE}) requires the knowledge of contamination proportion $\epsilon$ and smoothness $(\beta_0,\beta_1)$. In this section, we discuss adaptive procedures to estimate $f(0)$ without the knowledge of these parameters. However, adaptation to $\epsilon$ or to $(\beta_0,\beta_1)$ is not free, and one can only achieve slower rates than the minimax rate (\ref{eq:minimax-rate}). The adaptation cost varies for each different scenario. A summary of our results is listed below.
\begin{itemize}
  \item When the contamination proportion is unknown, the best possible rate is
  \begin{align*}
 n^{-\frac{2\beta_0}{2\beta_0+1}}\vee\epsilon^2.
  \end{align*}
  \item When the smoothness parameters are unknown, the best possible rate is
  \begin{align*}
 \left[\left(\frac{n}{\log n}\right)^{-\frac{2\beta_0}{2\beta_0+1}}\right]\vee  \left[\epsilon^2(1\wedge m)^2\right]\vee\left[\left(\frac{n}{\log n}\right)^{-\frac{2\beta_1}{2\beta_1+1}}\epsilon^{\frac{2}{2\beta_1+1}}\right].
  \end{align*}
  \item
  When both the contamination proportion and the smoothness are unknown, the best possible rate becomes
  \begin{align*}
  \left(\frac{n}{\log n}\right)^{-\frac{2\beta_0}{2\beta_0+1}}\vee\epsilon^2.
  \end{align*}
\end{itemize}
Compared with the minimax rate (\ref{eq:minimax-rate}), the ignorance of the contamination proportion implies that $m$ is replaced by $1$ in the rate, while the ignorance of the smoothness implies that $n$ is replaced by $n/\log n$ in the rate.

\subsection{Unknown Contamination Proportion}\label{sec:un-epsilon}

The kernel density estimator (\ref{eq:def-KDE}) depends on $\epsilon$ in two ways: the normalization through $\frac{1}{n(1-\epsilon)}$ and the optimal choice of bandwidth $h$.
Without the knowledge of $\epsilon$, we consider the following estimator
\begin{equation}
\wh f_h(0)=\frac{1}{n}\sum_{i=1}^n \frac{1}{h}K\left(\frac{X_i}{h}\right).\label{eq:def-KDE-n}
\end{equation}
The first difference between (\ref{eq:def-KDE-n}) and (\ref{eq:def-KDE}) is the normalization. When $\epsilon$ is not given, we can only use $\frac{1}{n}$ in (\ref{eq:def-KDE-n}). Moreover, the choice of $h$ in (\ref{eq:def-KDE-n}) cannot depend on $\epsilon$.
\begin{thm}\label{thm:fixedbandwidth}
For the estimator $\wh{f}(0)=\wh{f}_h(0)$ with some $K\in \mathcal{K}_{\left \lfloor{\beta_0\vee \beta_1}\right \rfloor}(L)$ and $h=n^{-\frac{1}{2\beta_0+1}}$, we have
$$\sup_{p(\epsilon,f,g)\in \mathcal M(\epsilon,\beta_0,\beta_1,L_0,L_1,m)}\mathbb{E}_{p^n}\left(\wh f(0)-f(0)\right)^2\lesssim n^{-\frac{2\beta_0}{2\beta_0+1}}\vee\epsilon^2.$$
\end{thm}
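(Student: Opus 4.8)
The plan is to run the classical bias--variance argument for the kernel estimator $\wh f_h(0)$ defined in (\ref{eq:def-KDE-n}), paying attention to the fact that it is normalized by $1/n$ rather than $1/(n(1-\epsilon))$; this mismatch is precisely what produces the extra $\epsilon^2$ term. Fix a mixture $p=(1-\epsilon)f+\epsilon g\in\mathcal{M}(\epsilon,\beta_0,\beta_1,L_0,L_1,m)$ and write $K_h(\cdot)=h^{-1}K(\cdot/h)$, so that $\mathbb{E}_{p^n}\wh f_h(0)=(K_h*p)(0)=(1-\epsilon)(K_h*f)(0)+\epsilon(K_h*g)(0)$. I would use the decomposition
\begin{equation*}
\wh f_h(0)-f(0)=\big(\wh f_h(0)-\mathbb{E}\wh f_h(0)\big)+(1-\epsilon)\big((K_h*f)(0)-f(0)\big)+\epsilon\big((K_h*g)(0)-f(0)\big),
\end{equation*}
and bound the second moment of each of the three pieces separately.

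For the smoothing bias, since $K\in\mathcal{K}_{\lfloor\beta_0\vee\beta_1\rfloor}(L)$ has vanishing moments of orders $1,\dots,\lfloor\beta_0\rfloor$ and finite $\int|x|^{\lfloor\beta_0\rfloor}|K(x)|\,dx$, a Taylor expansion of $f\in\Sigma(\beta_0,L_0)$ with integral remainder gives $|(K_h*f)(0)-f(0)|\lesssim h^{\beta_0}$, with a constant depending only on $\beta_0,L_0,L$. For the contamination bias I would first record two elementary facts: (i) a density lying in a H\"{o}lder ball is automatically uniformly bounded, so $\|f\|_\infty\leq C(\beta_0,L_0)$ and $\|g\|_\infty\leq C(\beta_1,L_1)$ --- a density taking a large value at a point must remain large on a whole neighborhood, forcing $\int f$ or $\int g$ to exceed $1$; and (ii) $\int|K|\lesssim L$, obtained by splitting at $|x|=1$ and using $\|K\|_\infty\leq L$ on $|x|\le1$ together with $\int|x|^{\lfloor\beta_0\vee\beta_1\rfloor}|K|\leq L$ on $|x|>1$. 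These give $|(K_h*g)(0)|\leq\|g\|_\infty\int|K|\lesssim1$ and $|f(0)|\lesssim1$, hence the contamination bias is $O(\epsilon)$; its irreducible part is the term $-\epsilon f(0)$, the ``dilution'' of the target by the factor $1-\epsilon$ that cannot be corrected without knowing $\epsilon$. Altogether the bias is $O(h^{\beta_0}+\epsilon)$.

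For the stochastic error, independence and $\|p\|_\infty\leq(1-\epsilon)\|f\|_\infty+\epsilon\|g\|_\infty\lesssim1$ give
\begin{equation*}
\mathrm{Var}_{p^n}\big(\wh f_h(0)\big)\leq\frac{1}{n}\,\mathbb{E}_p\big[K_h(X_1)^2\big]=\frac{1}{nh}\int K^2(u)\,p(hu)\,du\leq\frac{\|p\|_\infty}{nh}\int K^2\lesssim\frac{1}{nh}.
\end{equation*}
Combining the three bounds and plugging in $h=n^{-1/(2\beta_0+1)}$ yields $\mathbb{E}_{p^n}(\wh f_h(0)-f(0))^2\lesssim h^{2\beta_0}+\epsilon^2+(nh)^{-1}\asymp n^{-2\beta_0/(2\beta_0+1)}\vee\epsilon^2$, uniformly over $\mathcal{M}(\epsilon,\beta_0,\beta_1,L_0,L_1,m)$, which is the assertion. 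There is no genuine obstacle here: the computation is routine, and the only points needing a little care are the two uniform sup-norm bounds over the H\"{o}lder classes and the bookkeeping showing that the contamination contributes bias of exact order $\epsilon$. This last point also explains why the classical bandwidth $n^{-1/(2\beta_0+1)}$ is the right choice in this setting --- decreasing $h$ would enlarge the variance without shrinking the $\epsilon$-bias.
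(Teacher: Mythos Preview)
Your proof is correct and follows essentially the same bias--variance decomposition as the paper. The only cosmetic difference is in the contamination bias: the paper splits it as $\epsilon\big((K_h*g)(0)-g(0)\big)+\epsilon\big(g(0)-f(0)\big)$ and uses the $\beta_1$-smoothness of $g$ to bound the first piece by $\epsilon h^{\beta_1}$, whereas you bound $\epsilon|(K_h*g)(0)-f(0)|$ directly by $O(\epsilon)$ via $\|g\|_\infty\int|K|\lesssim 1$; since $\epsilon h^{\beta_1}\leq\epsilon$ anyway, both routes give the same final estimate.
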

With the choice $h=n^{-\frac{1}{2\beta_0+1}}$, $\wh{f}_h$ becomes the classical nonparametric density estimator.
The contamination results in an extra $\epsilon^2$ in the rate compared with the classical nonparametric minimax rate, regardless of the values of $m$ and $\beta_1$. Note that in the current setting, the error $\wh f_h(0)-f(0)$ has the following decomposition,
\begin{equation}
(\wh f_h(0)-\mathbb{E}\wh f_h(0))+(\mathbb{E}\wh f_h(0)-(1-\epsilon)f(0)-\epsilon g(0))+\epsilon(g(0)-f(0)).\label{eq:error-decomp2}
\end{equation}
The difference between (\ref{eq:error-decomp}) and (\ref{eq:error-decomp2}) is resulted from different normalizations in (\ref{eq:def-KDE}) and (\ref{eq:def-KDE-n}).
Some standard calculation gives the bound
\begin{align*}
\mathbb{E}(\wh f_h(0)-f(0))^2\lesssim \frac{1}{nh}\vee h^{2\beta_0}\vee \epsilon^2,
\end{align*}
which implies the optimal choice of bandwidth $h=n^{-\frac{1}{2\beta_0+1}}$, and thus the rate in Theorem \ref{thm:fixedbandwidth}. A detailed proof is given in Section \ref{sec:pf-upper-structure}.

In view of the form of the minimax rate (\ref{eq:minimax-rate}), the rate given by Theorem \ref{thm:fixedbandwidth} can be obtained by replacing the $\epsilon^2(1\wedge m)^2$ in (\ref{eq:minimax-rate}) with $\epsilon^2$. A matching lower bound for adaptivity to $\epsilon$ is given by the following theorem.
\begin{thm}\label{thm:smadapt1}
Consider two models $\mathcal{M}(\epsilon,\beta_0,\beta_1,L_0,L_1,m)$ and $\mathcal{M}(\wt\epsilon,\beta_0,\beta_1,L_0,L_1,m)$ with different contamination proportions. For any estimator $\wh{f}(0)$ that satisfies
\begin{align*}
\sup_{p(\epsilon,f,g)\in\mathcal{M}(\wt\epsilon,\beta_0,\beta_1,L_0,L_1,m)}\mathbb{E}_{p^n}\left(\wh f(0)-f(0)\right)^2\leq C\wt\epsilon^2,
\end{align*}
for some constant $C>0$, there must exist another constant $C'>0$, such that for $\epsilon\geq C'\wt\epsilon$, we have
\begin{align*}
\sup_{p(\epsilon,f,g)\in\mathcal{M}(\epsilon,\beta_0,\beta_1,L_0,L_1,m)}\mathbb{E}_{p^n}\left(\wh f(0)-f(0)\right)^2\gtrsim  \epsilon^2.
\end{align*}
\end{thm}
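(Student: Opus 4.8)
The plan is to derive the lower bound from the non-identifiability of the contamination decomposition rather than from a standard two-point construction. I will exhibit a \emph{single} distribution $q$ on $\mathbb{R}$ that simultaneously belongs to $\mathcal{M}(\wt\epsilon,\beta_0,\beta_1,L_0,L_1,m)$ and $\mathcal{M}(\epsilon,\beta_0,\beta_1,L_0,L_1,m)$, yet whose induced estimand ``$f(0)$'' differs by order $\epsilon$ between the two decompositions. Since the hypothesis forces $\wh f(0)$ to be within $\sqrt{C}\,\wt\epsilon$ of the first estimand in $L^2(q^n)$, the triangle inequality then forces it to be order $\epsilon$ away from the second, which yields the claimed $\gtrsim\epsilon^2$ lower bound over $\mathcal{M}(\epsilon,\dots)$.

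For the construction, fix a density $f_*\in\mathcal{P}(\beta_0,L_0)\cap\mathcal{C}^{\infty}(\mathbb{R})$ with $f_*(0)=:v_*>0$ (e.g.\ a suitably spatially rescaled smooth bump, so $v_*$ is a positive constant depending only on $\beta_0,L_0$), and let $g_*=c_a\,a(c_a\,\cdot)$ with $a$ from Lemma~\ref{lem:a} and $c_a>0$ small enough that $g_*\in\mathcal{P}(\beta_0,L_0)\cap\mathcal{P}(\beta_1,L_1)\cap\mathcal{C}^{\infty}(\mathbb{R})$, which is possible by property~3 of Lemma~\ref{lem:a}; note $g_*(0)=0$. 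Put $q=(1-\epsilon)f_*+\epsilon g_*$. On one hand $q=(1-\epsilon)\wt f+\epsilon\wt g$ with $\wt f=f_*$ and $\wt g=g_*$, so $q\in\mathcal{M}(\epsilon,\dots)$ with estimand $\wt f(0)=v_*$. On the other hand $q=(1-\wt\epsilon)f+\wt\epsilon g$ with
\[
f=\frac{1-\epsilon}{1-\wt\epsilon}\,f_*+\frac{\epsilon-\wt\epsilon}{1-\wt\epsilon}\,g_*,\qquad g=g_*,
\]
where $f$ is a convex combination of $f_*$ and $g_*$ (both coefficients are $\ge 0$ since $\wt\epsilon\le\epsilon\le\tfrac{1}{2}$, and they sum to $1$); since $\mathcal{P}(\beta_0,L_0)$ is convex and contains $f_*$ and $g_*$, $f$ is a bona fide density in $\mathcal{P}(\beta_0,L_0)$, while $g=g_*\in\mathcal{P}(\beta_1,L_1)$ with $g(0)=0\le m$. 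Hence $q\in\mathcal{M}(\wt\epsilon,\dots)$ as well, with estimand $f(0)=\frac{1-\epsilon}{1-\wt\epsilon}v_*$. Because $g_*(0)=0$, reassigning the mass $(\epsilon-\wt\epsilon)g_*$ between the clean and contaminated components changes only the normalization, so $|f(0)-\wt f(0)|=v_*\,\frac{\epsilon-\wt\epsilon}{1-\wt\epsilon}\ge v_*(\epsilon-\wt\epsilon)\ge\tfrac{1}{2}v_*\,\epsilon$ once $\epsilon\ge 2\wt\epsilon$.

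To finish, the hypothesis applied to $q\in\mathcal{M}(\wt\epsilon,\dots)$ gives $\mathbb{E}_{q^n}(\wh f(0)-f(0))^2\le C\wt\epsilon^2$, and since $f(0),\wt f(0)$ are constants the triangle inequality in $L^2(q^n)$ yields
\[
\Big(\mathbb{E}_{q^n}(\wh f(0)-\wt f(0))^2\Big)^{1/2}\ \ge\ |f(0)-\wt f(0)|-\Big(\mathbb{E}_{q^n}(\wh f(0)-f(0))^2\Big)^{1/2}\ \ge\ \tfrac{1}{2}v_*\,\epsilon-\sqrt{C}\,\wt\epsilon .
\]
With $C'=\max\{2,\,4\sqrt{C}/v_*\}$, for all $\epsilon\ge C'\wt\epsilon$ the right-hand side is $\ge\tfrac{1}{4}v_*\,\epsilon>0$, so
\[
\sup_{p(\epsilon,f,g)\in\mathcal{M}(\epsilon,\dots)}\mathbb{E}_{p^n}(\wh f(0)-f(0))^2\ \ge\ \mathbb{E}_{q^n}(\wh f(0)-\wt f(0))^2\ \ge\ \tfrac{1}{16}v_*^2\,\epsilon^2\ \gtrsim\ \epsilon^2 .
\]
No genuine analytic difficulty arises here; the one point requiring thought is the design of the construction — using a single distribution viewed through two contamination decompositions, with $g_*$ vanishing at $0$ so that the constraint $g(0)\le m$ is satisfied for every (even tiny) $m$. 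A naive two-point argument with two distinct distributions made close in chi-square would only reproduce the minimax term $\epsilon^2(1\wedge m)^2$, not the strictly larger adaptive rate $\epsilon^2$.
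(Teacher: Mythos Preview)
Your proposal is correct and follows essentially the same construction as the paper: the same pair $(f_*,g_*)$ (the paper's $(f_0,\,c_1a(c_1\cdot))$) and the same rewriting of the single mixture $q$ as both an $\epsilon$- and a $\wt\epsilon$-decomposition. The only cosmetic difference is in the closing step: the paper appeals to the Brown--Low constrained risk inequality (Lemma~\ref{lem:crineq}), but since the two data-generating laws coincide one has $I=1$, and that inequality collapses precisely to the $L^2$ triangle inequality you invoke directly.
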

Theorem \ref{thm:smadapt1} shows that it is impossible to achieve a rate that is faster than $\epsilon^2$ even over only two different contamination proportions. The proof of Theorem \ref{thm:smadapt1} relies on the following construction,
\begin{align*}
f&=f_0,\\
g&=c_1a(c_1x),\\
\wt f&= \frac{1-\epsilon}{1-\wt\epsilon}f_0+\frac{\epsilon-\wt\epsilon}{1-\wt\epsilon}c_1a(c_1x),\\
\wt g&=c_1a(c_1x).
\end{align*}
With an appropriate choice of the constant $c_1>0$, we have $(1-\epsilon)f+\epsilon g\in\mathcal{M}(\epsilon,\beta_0,\beta_1,L_0,L_1,m)$ and $(1-\wt{\epsilon})\wt{f}+\wt{\epsilon}\wt g\in \mathcal{M}(\wt\epsilon,\beta_0,\beta_1,L_0,L_1,m)$. Moreover, it is easy to check that
$$(1-\epsilon)f+\epsilon g=(1-\wt{\epsilon})\wt{f}+\wt{\epsilon}\wt g.$$
In other words, a model with contamination proportion $\epsilon$ can also be written as a mixture that uses a different $\wt{\epsilon}$. Unless the contamination proportion is specified, one cannot tell the difference between $(1-\epsilon)f+\epsilon g$ and $(1-\wt{\epsilon})\wt{f}+\wt{\epsilon}\wt g$. This leads to a lower bound of the error, which is of order $|f(0)-\wt{f}(0)|^2\asymp\epsilon^2$. A rigorous proof of Theorem \ref{thm:smadapt1} that uses a constrained risk inequality in \cite{brown1996constrained} is given in Section \ref{sec:pf-cri}.

\subsection{Unknown Smoothness}\label{sec:un-beta}

In this section, we consider the case that the smoothness numbers are unknown, but the contamination proportion is given.
In view of the kernel density estimator (\ref{eq:def-KDE}) that achieves the minimax rate, we can still use the normalization by $\frac{1}{n(1-\epsilon)}$ because of the knowledge of $\epsilon$, but the bandwidth $h$ needs to be picked in a data-driven way. For a given $h$, define
\begin{align*}
\wh f_h(0)=\frac{1}{n(1-\epsilon)}\sum_{i=1}^n \frac{1}{h}K\left(\frac{X_i}{h}\right).
\end{align*}
With a discrete set $\mathcal{H}$ and some constant $c_1>0$, Lepski's method \citep{lepskii1991problem,lepskii1992asymptotically,lepskii1993asymptotically} selects a data-driven bandwidth through the following procedure,
\begin{equation}
\wh h=\max\left\{h\in\mathcal{H}:|\wh f_h(0)-\wh f_l(0)|\leq c_1\sqrt{\frac{\log n}{nl}}, \forall l\leq h, l\in\mathcal{H}\right\}.\label{eq:h-lep}
\end{equation}
In words, we choose the largest bandwidth below which the variance dominates. If the set that is maximized over is empty, we will use the convention $\wh{h}=\frac{1}{n}$. The estimator $\wh{f}_{\wh{h}}(0)$ that uses a data-driven bandwidth enjoys the following guarantee.
\begin{thm}\label{thm:lepski2}
Consider the adaptive kernel density estimator $\wh{f}(0)=\wh{f}_{\wh{h}}(0)$ with the bandwidth defined by (\ref{eq:h-lep}). In (\ref{eq:h-lep}), we set $\mathcal{H}=\left\{1,\frac{1}{2},\cdots,\frac{1}{2^m}\right\}$ such that $\frac{1}{2^m}\leq \frac{1}{n}<\frac{1}{2^{m-1}}$ and $c_1$ to be a sufficiently large constant. The kernel $K$ is selected from $\mathcal{K}_l(L)$ with a large constant $l\geq \floor{\beta_0\vee\beta_1}$. Then, we have
\begin{eqnarray*}
&& \sup_{p(\epsilon,f,g)\in \mathcal M(\epsilon,\beta_0,\beta_1,L_0,L_1,m)}\mathbb{E}_{p^n}\left(\wh f(0)-f(0)\right)^2 \\
&\lesssim&  \left[\left(\frac{n}{\log n}\right)^{-\frac{2\beta_0}{2\beta_0+1}}\right]\vee  \left[\epsilon^2(1\wedge m)^2\right]\vee\left[\left(\frac{n}{\log n}\right)^{-\frac{2\beta_1}{2\beta_1+1}}\epsilon^{\frac{2}{2\beta_1+1}}\right].
\end{eqnarray*}
\end{thm}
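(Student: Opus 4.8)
The plan is to run the standard Lepski-type analysis for the data-driven bandwidth $\wh h$, but carefully track the extra bias term coming from contamination. Recall the decomposition (\ref{eq:error-decomp}): for a fixed bandwidth $h$,
\[
\wh f_h(0)-f(0)=\underbrace{\big(\wh f_h(0)-\Expect\wh f_h(0)\big)}_{=:S_h}+\underbrace{\Big(\Expect\wh f_h(0)-f(0)-\tfrac{\epsilon}{1-\epsilon}g(0)\Big)}_{=:B_h}+\tfrac{\epsilon}{1-\epsilon}g(0).
\]
From the analysis behind (\ref{eq:RD-s}), for $K\in\mathcal K_l(L)$ with $l\geq\floor{\beta_0\vee\beta_1}$ we have the pointwise bias bound $|B_h|\lesssim h^{\beta_0}+\epsilon h^{\beta_1}$ and the stochastic-error variance bound $\Expect S_h^2\lesssim\frac{1}{nh}$; moreover $|\tfrac{\epsilon}{1-\epsilon}g(0)|\lesssim \epsilon(1\wedge m)$ since $\epsilon\le 1/2$ and $g(0)\le m$. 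The key deterministic quantity is the ``ideal'' bandwidth $h^*:=\max\{h\in\mathcal H:\ h^{\beta_0}+\epsilon h^{\beta_1}\le c_2\sqrt{\log n/(nh)}\}$ for a suitable small constant $c_2$; one checks by elementary algebra that with this choice $(h^*)^{\beta_0}\vee(\epsilon (h^*)^{\beta_1})\asymp\sqrt{\log n/(nh^*)}$, and hence $\big((h^*)^{\beta_0}\big)^2\vee\big(\epsilon (h^*)^{\beta_1}\big)^2\vee\tfrac{1}{nh^*}\asymp\big(\tfrac{n}{\log n}\big)^{-\frac{2\beta_0}{2\beta_0+1}}\vee\big(\tfrac{n}{\log n}\big)^{-\frac{2\beta_1}{2\beta_1+1}}\epsilon^{\frac{2}{2\beta_1+1}}$, which (together with the $\epsilon^2(1\wedge m)^2$ term) is exactly the target rate.

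The core of the argument is then the usual two-part Lepski bound. \emph{Part 1 (oracle behaves well):} By a maximal inequality / union bound over the logarithmically many bandwidths in $\mathcal H$, the event $\mathcal E=\{\,|S_h|\le \tfrac{c_1}{3}\sqrt{\log n/(nh)}\text{ for all }h\in\mathcal H\,\}$ has probability at least $1-n^{-c}$ for $c_1$ large; here I would use a Bernstein bound for the bounded, i.i.d. summands $\tfrac1h K(X_i/h)$, whose variance is $O(1/h)$ and whose sup-norm is $O(1/h)$, noting $nh\gtrsim 1$ on $\mathcal H$ so the sub-exponential regime gives the $\sqrt{\log n}$ factor. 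On $\mathcal E$, for any $h\le l$ in $\mathcal H$ with $l\le h^*$ one has $|\wh f_h(0)-\wh f_l(0)|\le|S_h|+|S_l|+|B_h-B_l|\le \tfrac{2c_1}{3}\sqrt{\log n/(nl)}+C\big((h^*)^{\beta_0}+\epsilon(h^*)^{\beta_1}\big)\le c_1\sqrt{\log n/(nl)}$ once $c_1\gg c_2,C$; hence $\wh h\ge h^*$ on $\mathcal E$. \emph{Part 2 (control the error at $\wh h$):} On $\mathcal E$, split on $\wh h\ge h^*$. The Lepski test applied at the pair $(l,h)=(h^*,\wh h)$ gives $|\wh f_{\wh h}(0)-\wh f_{h^*}(0)|\le c_1\sqrt{\log n/(nh^*)}$, so
\[
|\wh f_{\wh h}(0)-f(0)|\le |\wh f_{\wh h}(0)-\wh f_{h^*}(0)|+|S_{h^*}|+|B_{h^*}|+\tfrac{\epsilon}{1-\epsilon}g(0)\lesssim \sqrt{\tfrac{\log n}{nh^*}}+(h^*)^{\beta_0}+\epsilon(h^*)^{\beta_1}+\epsilon(1\wedge m),
\]
which squared is bounded by the target rate by the computation above. \emph{Part 3 (the bad event):} On $\mathcal E^c$ we use the crude bound $|\wh f_{\wh h}(0)|\le \|K\|_\infty/(\wh h\,(1-\epsilon))\le 2\|K\|_\infty n$ (since $\wh h\ge 1/n$, or $\wh h=1/n$ by convention) and $f(0)\lesssim 1$, so $(\wh f_{\wh h}(0)-f(0))^2\lesssim n^2$; multiplied by $\Prob(\mathcal E^c)\le n^{-c}$ with $c_1$ chosen so that $c\ge 3$, this contributes a negligible $O(n^{-1})$, dominated by the first term of the rate. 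Combining the three parts and taking supremum over $\mathcal M(\epsilon,\beta_0,\beta_1,L_0,L_1,m)$ yields the claimed bound.

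The main obstacle is \textbf{Part 1} — showing $\wh h\ge h^*$ with high probability — because the comparison must absorb the \emph{contamination-induced bias} $\epsilon h^{\beta_1}$ into the threshold $c_1\sqrt{\log n/(nl)}$ uniformly over $l\le h^*$, and this only works because $h^*$ was defined to make $\epsilon (h^*)^{\beta_1}$ (not just $(h^*)^{\beta_0}$) at most a constant multiple of the stochastic scale at $h^*$; verifying this requires checking that $\epsilon h^{\beta_1}\le c_2\sqrt{\log n/(nh)}$ holds for all $h\le h^*$ (monotonicity of both sides is not automatic in $h$, so one argues via the two separate constraints $h^{\beta_0}\lesssim\sqrt{\log n/(nh)}$ and $\epsilon h^{\beta_1}\lesssim\sqrt{\log n/(nh)}$, each of which is monotone). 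A secondary subtlety is that the stochastic error $S_h$ depends on the true mixture $p=(1-\epsilon)f+\epsilon g$, so the Bernstein constants must be taken uniformly over $\mathcal M$, which is fine since $\|p\|_\infty$ and $\int p^2$ over a neighborhood of $0$ are bounded by constants depending only on $L_0,L_1$.
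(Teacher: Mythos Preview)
Your proposal is correct and follows essentially the same Lepski-type argument as the paper, which proves Theorem~\ref{thm:lepski} in detail and then states that Theorem~\ref{thm:lepski2} follows identically once the decomposition (\ref{eq:error-decomp}) replaces (\ref{eq:error-decomp2}). One minor notational slip: in Part~1, to verify $\wh h\ge h^*$ you should fix the candidate bandwidth at $h^*$ and compare against all smaller $l\le h^*$ (as the paper does), not take generic $h\le l\le h^*$---with that correction your bound $|S_{h^*}|+|S_l|\le\tfrac{2c_1}{3}\sqrt{\log n/(nl)}$ goes through, and your monotonicity worry at the end is moot since $h\mapsto h^{\beta}$ is increasing while $h\mapsto\sqrt{\log n/(nh)}$ is decreasing.
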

Lepski's method is known to be adaptive over various nonparametric classes, and it can achieve minimax rates up to a logarithmic factor without knowing the smoothness parameter \citep{lepski1997optimal}. Theorem \ref{thm:lepski2} shows that this is also the case with contaminated observations. With an adaptive kernel density estimator normalized by $\frac{1}{n(1-\epsilon)}$, the minimax rate (\ref{eq:minimax-rate}) is achieved up to a logarithmic factor in Theorem \ref{thm:lepski2}.

A comparison between the adaptive rate given by Theorem \ref{thm:lepski2} and the minimax rate (\ref{eq:minimax-rate}) reveals two differences. The first adaptation cost is given by $\left(\frac{n}{\log n}\right)^{-\frac{2\beta_0}{2\beta_0+1}}$, compared with $n^{-\frac{2\beta_0}{2\beta_0+1}}$ in (\ref{eq:minimax-rate}). Previous work in adaptive nonparametric estimation \citep{brown1996constrained,lepski1997optimal,cai2003rates} implies that this cost is unavoidable for adaptation to smoothness.
The second adaptation cost is given by $\left(\frac{n}{\log n}\right)^{-\frac{2\beta_1}{2\beta_1+1}}\epsilon^{\frac{2}{2\beta_1+1}}$, compared with $n^{-\frac{2\beta_1}{2\beta_1+1}}\epsilon^{\frac{2}{2\beta_1+1}}$ in (\ref{eq:minimax-rate}). In the next theorem, we show that this adaptations cost is also unavoidable without the knowledge of the smoothness parameters.

\begin{thm}\label{thm:smadapt2}
Consider two models $\mathcal{M}(\epsilon,\beta_0,\beta_1,L_0,L_1,m)$ and $\mathcal{M}(\epsilon,\wt\beta_0,\wt\beta_1,\wt L_0,\wt L_1,m)$ with different smoothness parameters. Assume that $\beta_0\leq\tilde \beta_0$, $\beta_1<\tilde \beta_1$, ${\beta}_0\geq{\beta}_1$ and $n\epsilon^2\geq(\log n)^2$. For any estimator $\wh{f}(0)$ that satisfies
$$\sup_{p(\epsilon,f,g)\in \mathcal{M}(\epsilon,\wt\beta_0,\wt\beta_1,\wt L_0,\wt L_1,m)}\mathbb{E}_{p^n}\left(\wh f(0)-f(0)\right)^2\leq  C\left(\frac{n}{\log n}\right)^{-\frac{2\wt\beta_1}{2\wt\beta_1+1}}\epsilon^{\frac{2}{2\wt\beta_1+1}},$$
for some constant $C>0$, we must have
$$\sup_{p(\epsilon,f,g)\in \mathcal{M}(\epsilon,\beta_0,\beta_1,L_0,L_1,m)}\mathbb{E}_{p^n}\left(\wh f(0)-f(0)\right)^2\gtrsim \left(\frac{n}{\log n}\right)^{-\frac{2\beta_1}{2\beta_1+1}}\epsilon^{\frac{2}{2\beta_1+1}}.$$
\end{thm}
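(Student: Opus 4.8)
The plan is to derive this lower bound from a constrained risk inequality of the type in \cite{brown1996constrained}, applied to two competing mixtures: a distinguished $p_0\in\mathcal M(\epsilon,\wt\beta_0,\wt\beta_1,\wt L_0,\wt L_1,m)$ in the smoother model and an alternative $p_1\in\mathcal M(\epsilon,\beta_0,\beta_1,L_0,L_1,m)$ in the rougher model. Write $\delta^2:=C\left(\frac{n}{\log n}\right)^{-\frac{2\wt\beta_1}{2\wt\beta_1+1}}\epsilon^{\frac{2}{2\wt\beta_1+1}}$ for the assumed risk bound. By hypothesis, $\Expect_{p_0^n}(\wh f(0)-f_0(0))^2\le\delta^2$, so $\wh f(0)$ is pinned down near $f_0(0)$ under $p_0$; bounding $\big|\Expect_{p_0^n}\wh f(0)-\Expect_{p_1^n}\wh f(0)\big|$ by Cauchy--Schwarz against the likelihood ratio gives $\Expect_{p_1^n}(\wh f(0)-f_1(0))^2\ge\big(|f_1(0)-f_0(0)|-\delta\sqrt{1+\chi^2(p_1^n,p_0^n)}\big)_+^2$. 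Hence, if we can build $p_0,p_1$ with $|f_1(0)-f_0(0)|\asymp\rho$ equal to the square root of the claimed rate and with $\delta\sqrt{1+\chi^2(p_1^n,p_0^n)}\le\tfrac12\rho$, then the worst-case risk over $\mathcal M(\epsilon,\beta_0,\beta_1,L_0,L_1,m)$ is $\gtrsim\rho^2$, which is the assertion.

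For the construction I would reuse the two-level perturbation from the proof of Theorem \ref{thm:smoothlowerbound}, now with one side living in the smoother model. Take $f_0$ a fixed $\mathcal C^\infty$ density with $f_0(0)$ bounded away from $0$ and $\infty$, and $g_0(x)=c_1a(c_1x)$, so that $g_0(0)=0\le m$ and $p_0=(1-\epsilon)f_0+\epsilon g_0$ lies in the smoother model for a suitable $c_1$. For the rougher side, with bandwidths linked by $h^{\beta_0}\asymp\wt h^{\beta_1}$, set $f_1=f_0+\tfrac{\epsilon}{1-\epsilon}c_2\big[h^{\beta_0}l(\tfrac{\cdot}{h})-h^{\beta_0}l(\tfrac{2(\cdot-c_4)}{h})-h^{\beta_0}l(\tfrac{2(\cdot+c_4)}{h})\big]$ and $g_1=g_0+c_2\big[h^{\beta_0}l(\tfrac{\cdot}{h})-\cdots\big]-c_3\wt h^{\beta_1}b(\tfrac{\cdot}{\wt h})$, exactly as in Theorem \ref{thm:smoothlowerbound}, choosing $c_2,c_3$ small constants so that $f_1\in\mathcal P(\beta_0,L_0)$, $g_1\in\mathcal P(\beta_1,L_1)$ (here $\beta_0\ge\beta_1$ is used to keep the $h$-level bump inside the $\beta_1$-H\"older ball), and $g_1(0)=0$. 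Then $p_1-p_0=-c_3\epsilon\wt h^{\beta_1}b(\tfrac{\cdot}{\wt h})$, so $\chi^2(p_1,p_0)\lesssim\epsilon^2\wt h^{2\beta_1+1}$ and $\chi^2(p_1^n,p_0^n)\le(1+O(\epsilon^2\wt h^{2\beta_1+1}))^n$, while $\rho:=|f_1(0)-f_0(0)|\asymp\epsilon h^{\beta_0}\asymp\epsilon\wt h^{\beta_1}$. Choosing $\wt h\asymp(n\epsilon^2/\log n)^{-1/(2\beta_1+1)}$ makes $\rho^2\asymp\left(\frac{n}{\log n}\right)^{-\frac{2\beta_1}{2\beta_1+1}}\epsilon^{\frac{2}{2\beta_1+1}}$ as desired and makes $n\epsilon^2\wt h^{2\beta_1+1}\asymp\log n$, hence $\chi^2(p_1^n,p_0^n)\lesssim n^{\kappa}$ for a constant $\kappa>0$ that can be made small by shrinking $c_3$.

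The crux, and the step I expect to be the main obstacle, is verifying $n^{\kappa}\lesssim(\rho/\delta)^2$. A short computation gives $\rho^2/\delta^2\asymp(n\epsilon^2/\log n)^{2\mu}$ with $\mu=\frac{\wt\beta_1}{2\wt\beta_1+1}-\frac{\beta_1}{2\beta_1+1}>0$ (this is where $\beta_1<\wt\beta_1$ enters essentially), so the requirement becomes $\kappa\log n\lesssim 2\mu\log(n\epsilon^2/\log n)$. This can fail if $\epsilon$ is so small that $n\epsilon^2$ is merely polylogarithmic; the resolution is that in that regime $\delta^2$ is smaller than the smoother-model minimax rate of Theorem \ref{thm:minimax-rate} (in particular smaller than its first term $n^{-2\wt\beta_0/(2\wt\beta_0+1)}$), so no estimator satisfies the hypothesis and the statement is vacuous. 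One therefore reduces to the regime in which $\delta^2$ is the dominant smoother-model rate, and there — using $\beta_0\le\wt\beta_0$, $\beta_0\ge\beta_1$, and $n\epsilon^2\ge(\log n)^2$ — one checks that $n\epsilon^2$ is polynomially large in $n$, so $\log(n\epsilon^2/\log n)\asymp\log n$ and the inequality holds once $c_3$ is taken small enough. If one wants to avoid chasing the relation between $\kappa$ and $\mu$, the clean alternative is to replace the single alternative $p_1$ by a uniform mixture over $M\asymp\wt h^{-(\beta_1+1)}$ alternatives obtained by translating the $\wt h$-level wiggle of $g_1$ to disjoint locations inside the support of $g_0$ while keeping $f_1$ and $f_1(0)$ unchanged; since the translated wiggles have disjoint supports, this divides the exponentiated single-observation $\chi^2$ by $M$ in the bound for $\chi^2\!\big(\frac1M\sum_jp_{1,j}^n\,\big\|\,p_0^n\big)$, buying the missing polynomial factor, after which the mixture form of the constrained risk inequality delivers the same conclusion. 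The remaining items — confirming that $f_0,f_1,g_0,g_1$ (and the translates) are genuine densities in the prescribed H\"older balls and that the level constraint $g(0)\le m$ holds — are routine verifications paralleling Lemma \ref{lem:a} and Lemma \ref{lem:b}.
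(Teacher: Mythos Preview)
Your plan matches the paper's proof: the same constrained risk inequality (Lemma~\ref{lem:crineq}) applied to a smooth unperturbed pair $(f_0,g_0)$ in $\mathcal M(\epsilon,\wt\beta_0,\wt\beta_1,\wt L_0,\wt L_1,m)$ and a rough perturbed pair $(f_1,g_1)$ built from the two-level bump of Lemma~\ref{lem:term3}, with $\wt h\asymp(n\epsilon^2/\log n)^{-1/(2\beta_1+1)}$. You are in fact more careful than the paper at the last step: the construction only yields $I\le n^\kappa$, and closing $\delta I\le\Delta$ does require $\log(n\epsilon^2)\asymp\log n$. Your vacuity argument---if the hypothesis is satisfiable then the smoother-model lower bound $n^{-2\wt\beta_0/(2\wt\beta_0+1)}$ forces $n\epsilon^2$ to be a fixed power of $n$---is the right way to finish; the paper simply asserts $I\le(n\epsilon^2/\log n)^c$ and stops.

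There is, however, a sign error that breaks the key cancellation. In Lemma~\ref{lem:term3} the $h$-level bump appears on $\wt f$ and on $g$ (paired with the unperturbed $f$ and $\wt g$), so the two copies enter $p-q$ with opposite signs and cancel. You instead place the bump with a $+$ sign on \emph{both} $f_1$ and $g_1$, so
\[
p_1-p_0=(1-\epsilon)(f_1-f_0)+\epsilon(g_1-g_0)=2\epsilon c_2\Big[h^{\beta_0}l\Big(\frac{\cdot}{h}\Big)-\cdots\Big]-\epsilon c_3\wt h^{\beta_1}b\Big(\frac{\cdot}{\wt h}\Big),
\]
not $-\epsilon c_3\wt h^{\beta_1}b(\cdot/\wt h)$ as you claim. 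Since $h^{\beta_0}\asymp\wt h^{\beta_1}$ with $\beta_0\ge\beta_1$ gives $h\gtrsim\wt h$, the residual $l$-bump contributes $\epsilon^2h^{2\beta_0+1}\gtrsim\epsilon^2\wt h^{2\beta_1+1}$ to $\chi^2(p_1,p_0)$, which is too large and yields the wrong rate. The fix is immediate---take $f_1=f_0-\tfrac{\epsilon}{1-\epsilon}c_2[\cdots]$, exactly as the paper does in its proof---but as written your formula for $p_1-p_0$ is incorrect. The mixture alternative inherits this defect and also does not respect $g_{1,j}(0)\le m$ for $m=0$ once the $b$-wiggle is translated away from the origin (the cancellation with the $l$-bump at $0$ is then lost); since your first fix already works, that route is not needed.
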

Similar to the statement of Theorem \ref{thm:smadapt1}, Theorem \ref{thm:smadapt2} shows that it is impossible to achieve a rate that is faster than $\left(\frac{n}{\log n}\right)^{-\frac{2\beta_1}{2\beta_1+1}}\epsilon^{\frac{2}{2\beta_1+1}}$ across two function classes with different smoothness parameters.
We remark that the assumptions ${\beta}_0\geq {\beta}_1$ and $n\epsilon^2\geq(\log n)^2$ in Theorem \ref{thm:smadapt2} are necessary conditions for $\left(\frac{n}{\log n}\right)^{-\frac{2\beta_1}{2\beta_1+1}}\epsilon^{\frac{2}{2\beta_1+1}}$ to dominate $\left(\frac{n}{\log n}\right)^{-\frac{2\beta_0}{2\beta_0+1}}$. Without these two conditions, $\left(\frac{n}{\log n}\right)^{-\frac{2\beta_0}{2\beta_0+1}}$ is the larger term between the two, and the lower bound is already in the literature.

In conclusion, the rate in Theorem \ref{thm:lepski2} achieved by Lepski's method cannot be improved unless smoothness parameters are given.

\subsection{Unknown Contamination Proportion and Unknown Smoothness}

When both the contamination proportion and the smoothness are unknown, we consider Lepski's method with a kernel density estimator normalized by $\frac{1}{n}$. Define
\begin{align*}
\wh f_h(0)=\frac{1}{n}\sum_{i=1}^n \frac{1}{h}K\left(\frac{X_i}{h}\right).
\end{align*}
Then, a data-driven bandwidth $\wh{h}$ is selected according to (\ref{eq:h-lep}). Again, if the set that is maximized over is empty in (\ref{eq:h-lep}), we will use the convention $\wh{h}=\frac{1}{n}$. Note that this is a fully data-driven estimator that is adaptive to both the contamination proportion and the smoothness. It enjoys the following guarantee.

\begin{thm}\label{thm:lepski}
Consider the adaptive kernel density estimator $\wh{f}(0)=\wh{f}_{\wh{h}}(0)$ with the bandwidth defined by (\ref{eq:h-lep}). In (\ref{eq:h-lep}), we set $\mathcal{H}=\left\{1,\frac{1}{2},\cdots,\frac{1}{2^m}\right\}$ such that $\frac{1}{2^m}\leq \frac{1}{n}<\frac{1}{2^{m-1}}$ and $c_1$ to be a sufficiently large constant. The kernel $K$ is selected from $\mathcal{K}_l(L)$ with a large constant $l\geq \floor{\beta_0\vee\beta_1}$. Then, we have
$$\sup_{p(\epsilon,f,g)\in \mathcal M(\epsilon,\beta_0,\beta_1,L_0,L_1,m)}\mathbb{E}_{p^n}\left(\wh f(0)-f(0)\right)^2\lesssim \left(\frac{n}{\log n}\right)^{-\frac{2\beta_0}{2\beta_0+1}}\vee\epsilon^2.$$
\end{thm}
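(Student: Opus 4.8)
The plan is a Lepski-type bias--variance argument adapted to the contamination bias. Write $K_h(x)=h^{-1}K(x/h)$ and $p=(1-\epsilon)f+\epsilon g$, so $\mathbb E\wh f_h(0)=(K_h\ast p)(0)=(1-\epsilon)(K_h\ast f)(0)+\epsilon(K_h\ast g)(0)$. Using that $K$ has order $l\ge\floor{\beta_0\vee\beta_1}$ and that every density in $\mathcal P(\beta,L)$ is uniformly bounded by a constant depending only on $(\beta,L)$, the first step is to record the two bias estimates $|\mathbb E\wh f_h(0)-\mathbb E\wh f_l(0)|\lesssim h^{\beta_0}+\epsilon h^{\beta_1}$ for $l\le h$, and $|\mathbb E\wh f_h(0)-f(0)|\lesssim h^{\beta_0}+\epsilon$, which together with $\Var(\wh f_h(0))\lesssim (nh)^{-1}$ reproduce $\mathbb E(\wh f_h(0)-f(0))^2\lesssim (nh)^{-1}+h^{2\beta_0}+\epsilon^2$ from the remark after Theorem~\ref{thm:fixedbandwidth}. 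The important asymmetry is that the contamination contributes only the vanishing quantity $\epsilon h^{\beta_1}$ to the \emph{difference of biases across scales}, but a fixed $O(\epsilon)$ to the \emph{bias toward} $f(0)$; the latter is of the order of the $\epsilon^2$ term in the target rate and hence can simply be absorbed.

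Second, I would introduce the purely analytic oracle bandwidth $\bar h=\max\{h\in\mathcal H:\ C(h^{\beta_0}+\epsilon h^{\beta_1})\le \tfrac{c_1}{4}\sqrt{\log n/(nh)}\}$, with $C$ the constant in the first bias bound. Elementary manipulations show $\bar h$ is well defined and $<1/2$ for $n$ large, that $\bar h^{\beta_0}\lesssim (n/\log n)^{-\beta_0/(2\beta_0+1)}$, and (since $2\bar h\in\mathcal H$ violates the defining inequality) that $\sqrt{\log n/(n\bar h)}\lesssim \bar h^{\beta_0}+\epsilon$. Let $\mathcal E=\{|\wh f_h(0)-\mathbb E\wh f_h(0)|\le \tfrac{c_1}{4}\sqrt{\log n/(nh)}\ \text{for all }h\in\mathcal H\}$; by Bernstein's inequality, with $\|K_h\|_\infty\vee\Var(K_h(X_1))\lesssim 1/h$, and a union bound over the $O(\log n)$ scales, $\mathbb P(\mathcal E^c)$ is polynomially small once $c_1$ is a large enough constant (for the few finest scales, where $nh$ need not exceed $\log n$ and Bernstein is weaker, one restricts $\mathcal H$ to $h\gtrsim \log n/n$, which is harmless since $\bar h$ is already at least this large). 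On $\mathcal E$ the first bias bound shows the inequality defining $\wh h$ in (\ref{eq:h-lep}) holds at $h=\bar h$ for every $l\le\bar h$, so $\wh h\ge\bar h$; applying (\ref{eq:h-lep}) to the pair $(\wh h,\bar h)$ and bounding $\wh f_{\bar h}(0)-f(0)$ by the bias--variance estimate at $\bar h$ then gives $|\wh f_{\wh h}(0)-f(0)|\lesssim \sqrt{\log n/(n\bar h)}+\bar h^{\beta_0}+\epsilon\lesssim (n/\log n)^{-\beta_0/(2\beta_0+1)}+\epsilon$, the claimed rate. Off $\mathcal E$, Cauchy--Schwarz with $\mathbb E\wh f_h(0)^4\lesssim 1$ (valid for every $h\gtrsim n^{-1}$) and the polynomial bound on $\mathbb P(\mathcal E^c)$ make the contribution negligible.

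Alternatively, the statement follows quickly from Theorem~\ref{thm:lepski2}: since $\wh f_h(0)=(1-\epsilon)\wt f_h(0)$ for the estimator $\wt f_h$ of that theorem, the rule (\ref{eq:h-lep}) applied to $\{\wh f_h(0)\}$ is the same rule applied to $\{\wt f_h(0)\}$ with threshold constant $c_1/(1-\epsilon)\in[c_1,2c_1]$, still ``sufficiently large'', so Theorem~\ref{thm:lepski2} yields its three-term rate for $\wt f_{\wh h}(0)$; then $\wh f_{\wh h}(0)-f(0)=(1-\epsilon)(\wt f_{\wh h}(0)-f(0))-\epsilon f(0)$ with $f(0)\lesssim1$ adds an $\epsilon^2$, and one checks the elementary fact that $[(n/\log n)^{-\frac{2\beta_0}{2\beta_0+1}}]\vee[\epsilon^2(1\wedge m)^2]\vee[(n/\log n)^{-\frac{2\beta_1}{2\beta_1+1}}\epsilon^{\frac{2}{2\beta_1+1}}]\vee\epsilon^2$ collapses to $[(n/\log n)^{-\frac{2\beta_0}{2\beta_0+1}}]\vee\epsilon^2$ (comparing exponents, the third term never exceeds the first one or $\epsilon^2$). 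In either route the step demanding care is getting the oracle bandwidth right: relative to the natural centering $(K_h\ast p)(0)$ the contamination bias is only $\epsilon h^{\beta_1}$, and this, not the $O(\epsilon)$ bias toward $f(0)$, is what must enter the Lepski comparison --- otherwise, for $\epsilon$ not small and $\beta_1<\beta_0$, no bandwidth would be admissible --- while the $O(\epsilon)$ bias itself has to be carried separately and reabsorbed into the $\epsilon^2$ of the rate. A secondary nuisance is the weak concentration of the kernel estimator at the very finest bandwidths.
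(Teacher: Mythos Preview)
Your primary argument is essentially the paper's own proof: the same error decomposition into stochastic part, kernel bias $h^{\beta_0}+\epsilon h^{\beta_1}$, and the $h$-free term $\epsilon(g(0)-f(0))$; the same oracle bandwidth $\bar h=h_*$ defined as the largest $h\in\mathcal H$ with $h^{\beta_0}+\epsilon h^{\beta_1}$ below a multiple of $\sqrt{\log n/(nh)}$; Bernstein's inequality plus a union bound over $\mathcal H$ to force $\wh h\ge h_*$ with high probability; and on the good event the split $|\wh f_{\wh h}(0)-f(0)|\le|\wh f_{\wh h}(0)-\wh f_{h_*}(0)|+|\wh f_{h_*}(0)-f(0)|$ with the first piece controlled by the defining inequality (\ref{eq:h-lep}). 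The differences are cosmetic: you phrase the good event as uniform concentration $\mathcal E$ rather than directly as $\{\wh h\ge h_*\}$, and you handle the complement via Cauchy--Schwarz and a fourth-moment bound whereas the paper uses the cruder $(\wh f_{\wh h}(0)-f(0))^2=O(n^2)$. You are in fact more careful than the paper about the very finest bandwidths $h\lesssim(\log n)/n$, where the Bernstein exponent degrades to order $\sqrt{\log n}$ rather than $\log n$; the paper passes over this, and your fix of trimming $\mathcal H$ below is the standard remedy and costs nothing since $\bar h$ already lies above that range.

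Your alternative route through Theorem~\ref{thm:lepski2} is a legitimate shortcut the paper does not take. Because the two kernel estimators differ only by the scalar $1-\epsilon\in[\tfrac12,1]$, the rule (\ref{eq:h-lep}) selects the same $\wh h$ for both once the threshold constant is rescaled by $1/(1-\epsilon)\in[1,2]$, so Theorem~\ref{thm:lepski2} applies verbatim to give its three-term rate for $\wt f_{\wh h}(0)$; writing $\wh f_{\wh h}(0)-f(0)=(1-\epsilon)(\wt f_{\wh h}(0)-f(0))-\epsilon f(0)$ then adds only $O(\epsilon^2)$, and your collapse of the resulting four-term maximum is correct (the third term $(n/\log n)^{-2\beta_1/(2\beta_1+1)}\epsilon^{2/(2\beta_1+1)}$ is a weighted geometric mean of $(n/\log n)^{-1}$ and $\epsilon^2$ with weights summing to one, hence bounded by their maximum, and $(n/\log n)^{-1}\le(n/\log n)^{-2\beta_0/(2\beta_0+1)}$). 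This buys brevity at the price of invoking a slightly stronger result; the paper instead proves Theorem~\ref{thm:lepski} first and then remarks that the proof of Theorem~\ref{thm:lepski2} is the same with the other normalization.
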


Compared with the minimax rate in Theorem \ref{thm:minimax-rate}, the rate in Theorem \ref{thm:lepski} can be understood as replacing $n$ and $\epsilon^2(1\wedge m)^2$ respectively by $n/\log n$ and $\epsilon^2$ in (\ref{eq:minimax-rate}). In view of the results in both Section \ref{sec:un-epsilon} and Section \ref{sec:un-beta}, this rate $\left(\frac{n}{\log n}\right)^{-\frac{2\beta_0}{2\beta_0+1}}\vee\epsilon^2$ in Theorem \ref{thm:lepski} cannot be improved by any procedure that is adaptive to both contamination proportion and smoothness.


\section{Results for Arbitrary Contamination}\label{sec:ma}

\subsection{Minimax Rates}

In this section, we study the contamination model without any structural assumption on the contamination distribution:
\begin{align*}
X_1,\dots,X_n \sim (1-\epsilon)P_f+\epsilon G
\end{align*}
where $P_f$ is a distribution on $\mathbb{R}$ that has a density function $f$, and $G$ is an arbitrary contamination distribution. This leads to the following model space
$$\mathcal{M}(\epsilon,\beta_0,L_0)=\left\{(1-\epsilon)P_f+\epsilon G\Big| f\in\mathcal{P}(\beta_0,L_0)\text{ and } G\text{ is an arbitrary distribution}\right\}.$$
This is often referred to as Huber's $\epsilon$-contamination model \citep{huber1964robust,huber1965robust}. Nonparametric function estimation under Huber's $\epsilon$-contamination model has recently been studied by \cite{chen2016general,gao2017robust} for global loss functions. In this paper, our focus is on the local estimation of $f(0)$. The corresponding minimax risk is defined by
$$\mathcal{R}(\epsilon,\beta_0,L_0)=\inf_{\wh f(0)}\sup_{p(\epsilon,f,g)\in {\mathcal M}(\epsilon,\beta_0,L_0)}\mathbb{E}_{p^n}\left(\wh f(0)-f(0)\right)^2.$$
In contrast to the minimax rate studied in Section \ref{sec:minimax-s}, we only have one parameter $\epsilon$ that indexes the influence of the contamination for $\mathcal{R}(\epsilon,\beta_0,L_0)$.

\begin{thm}\label{thm:minimax-arb}
Under the setting above, we have
\begin{equation}
\mathcal{R}(\epsilon,\beta_0,L_0)\asymp [n^{-\frac{2\beta_0}{2\beta_0+1}}]\vee[\epsilon^{\frac{2\beta_0}{\beta_0+1}}].\label{eq:easy-minimax}
\end{equation}
\end{thm}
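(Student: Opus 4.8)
The plan is to establish the upper and lower halves of (\ref{eq:easy-minimax}) separately; the point is that the arbitrariness of $G$ changes both the optimal bandwidth and the relevant modulus of continuity relative to Section~\ref{sec:ms}. For the \emph{upper bound} I would use the same kernel estimator $\wh f_h(0)$ as in (\ref{eq:def-KDE}) with $K\in\mathcal{K}_{\lfloor\beta_0\rfloor}(L)$, but with the \emph{inflated} bandwidth $h\asymp n^{-\frac{1}{2\beta_0+1}}\vee\epsilon^{\frac{1}{\beta_0+1}}$. Writing $K_h(x)=h^{-1}K(x/h)$, one has $\mathbb{E}\wh f_h(0)=(K_h*f)(0)+\frac{\epsilon}{1-\epsilon}\int K_h(-y)\,G(dy)$, so decomposing $\wh f_h(0)-f(0)$ into stochastic error, kernel approximation error of $f$, and contamination bias, the three pieces are controlled by $O((nh)^{-1/2})$ in standard deviation, $O(h^{\beta_0})$ by the usual Taylor expansion (using the vanishing moments of $K$ and $f\in\Sigma(\beta_0,L_0)$), and — the crucial point — only $\frac{\epsilon}{1-\epsilon}\|K_h\|_\infty\le 2L\epsilon/h$ for the last, since with arbitrary $G$ there is no better bound than $|\int K_h(-y)G(dy)|\le\|K_h\|_\infty$. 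Hence
$$\mathbb{E}\big(\wh f_h(0)-f(0)\big)^2\lesssim \frac{1}{nh}+h^{2\beta_0}+\frac{\epsilon^2}{h^2},$$
and the stated $h$ (which balances $h^{2\beta_0}$ against $\epsilon^2/h^2$ while keeping $(nh)^{-1}\le n^{-2\beta_0/(2\beta_0+1)}$) gives exactly $n^{-\frac{2\beta_0}{2\beta_0+1}}\vee\epsilon^{\frac{2\beta_0}{\beta_0+1}}$.

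For the \emph{lower bound}, the term $n^{-\frac{2\beta_0}{2\beta_0+1}}$ is the classical pointwise lower bound over $\mathcal{P}(\beta_0,L_0)$ (take both contaminating distributions equal to a fixed density in the class, so the $n$-fold $\chi^2$ reduces to the standard computation up to a harmless factor $(1-\epsilon)^2$), so I focus on $\mathcal{R}\gtrsim\epsilon^{\frac{2\beta_0}{\beta_0+1}}$. Fix a density $f_0\in\mathcal{P}(\beta_0,L_0/2)\cap\mathcal{C}^{\infty}(\mathbb R)$ that is bounded below by a positive constant on $[-1,1]$, let $b$ be the function of Lemma~\ref{lem:b}, put $w\asymp\epsilon^{\frac{1}{\beta_0+1}}$ (so $w\le 1$ as $\epsilon\le1/2$), and for a small constant $c_1>0$ set $\delta(x)=c_1 w^{\beta_0}b(x/w)$ and $f_1=f_0+\delta$. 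Then $\delta\in\Sigma(\beta_0,L_0/2)$ by property~2 of Lemma~\ref{lem:b}, $\int\delta=0$ since $\int b=0$, and $\|\delta\|_{\infty}\lesssim c_1 w^{\beta_0}$, so for $c_1$ small $f_1\ge 0$ and hence $f_1\in\mathcal{P}(\beta_0,L_0)$; moreover $\|\delta\|_1=c_1\|b\|_1 w^{\beta_0+1}\asymp\epsilon$. This last identity is the crux: choosing the constant in $w\asymp\epsilon^{1/(\beta_0+1)}$ so that $\frac{1-\epsilon}{2\epsilon}\|\delta\|_1\le1$, I can take \emph{any} density $G_1$ with $G_1\ge\frac{1-\epsilon}{\epsilon}\delta^-$ pointwise and set $G_0=G_1+\frac{1-\epsilon}{\epsilon}\delta$; then $\int G_0=1$ and $G_0\ge\frac{1-\epsilon}{\epsilon}\delta^+\ge0$, so $G_0$ is also a density, and by construction $(1-\epsilon)P_{f_0}+\epsilon G_0=(1-\epsilon)P_{f_1}+\epsilon G_1$. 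Both mixtures lie in $\mathcal{M}(\epsilon,\beta_0,L_0)$ and are equal as distributions, so their $n$-fold products have total variation distance $0$; the standard two-point argument (as in the proof of Theorem~\ref{thm:smoothlowerbound}) then yields $\mathcal{R}\gtrsim|f_0(0)-f_1(0)|^2=c_1^2 b(0)^2 w^{2\beta_0}\asymp\epsilon^{\frac{2\beta_0}{\beta_0+1}}$. Combining this with the classical term and the upper bound proves (\ref{eq:easy-minimax}).

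The main obstacle is identifying the correct modulus of continuity under arbitrary contamination: since $G$ is unrestricted, the contamination can absorb any perturbation of the density whose $L^1$-mass is $\lesssim\epsilon$, so the extremal case is the bump $\delta$ that maximizes $|\delta(0)|$ subject to $\delta\in\Sigma(\beta_0,L_0)$, $\int\delta=0$, and $\|\delta\|_1\lesssim\epsilon$; balancing width $w$ against height $\asymp w^{\beta_0}$ under the constraint $w^{\beta_0+1}\lesssim\epsilon$ forces $w\asymp\epsilon^{1/(\beta_0+1)}$ and hence the exponent $\frac{2\beta_0}{\beta_0+1}$, which is why $\beta_0$ — the smoothness of the \emph{target} — governs the contamination term. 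The corresponding subtlety on the upper-bound side is that one must \emph{enlarge} rather than shrink the bandwidth and check that the $\epsilon/h$ contamination bias (not $\epsilon$, and not the smaller $\epsilon h^{\beta_1}$ term of the structured model) is the binding one; with these two points in hand, the remaining work — the Taylor remainder estimate, the variance of the kernel estimator, and verifying membership in the Hölder classes — is routine and parallels the proof of Theorem~\ref{thm:upperbound}.
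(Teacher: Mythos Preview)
Your argument is essentially the paper's: the same kernel upper bound with the enlarged bandwidth $h\asymp n^{-1/(2\beta_0+1)}\vee\epsilon^{1/(\beta_0+1)}$ (the paper uses the $1/n$-normalized estimator rather than (\ref{eq:def-KDE}), but this is immaterial), and the same two-point lower bound via a H\"older bump of width $\asymp\epsilon^{1/(\beta_0+1)}$ whose $L^1$-mass $\lesssim\epsilon$ is absorbed into the difference of two contamination densities --- the paper packages your explicit $G_0,G_1$ construction as a standalone lemma (Lemma~\ref{lem:dif}). One small point to tighten: with $G$ arbitrary, $\Var\!\big(K_h(X)\big)\lesssim h^{-1}+\epsilon h^{-2}$ rather than just $h^{-1}$ (e.g.\ $G=\delta_0$), so your stochastic bound $O((nh)^{-1/2})$ is justified only after noting that your $h\ge\epsilon^{1/(\beta_0+1)}\ge\epsilon$, whence $\epsilon/(nh^2)\le 1/(nh)$ and the displayed risk bound is unaffected.
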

The minimax rate given by Theorem \ref{thm:minimax-arb} only involves two terms. The first term $n^{-\frac{2\beta_0}{2\beta_0+1}}$ is the classical minimax rate for nonparametric estimation. The second term $\epsilon^{\frac{2\beta_0}{\beta_0+1}}$ characterizes the influence of contamination. It is worth noticing that the smoothness index of $f$ appears both in $n^{-\frac{2\beta_0}{2\beta_0+1}}$ and $\epsilon^{\frac{2\beta_0}{\beta_0+1}}$. A larger value of $\beta_0$ implies a less influence of the contamination. This is in contrast to the rate of $\mathcal{R}(\epsilon,\beta_0,\beta_1,L_0,L_1,m)$ in Theorem \ref{thm:minimax-rate}.

The phase transition boundary of $\mathcal{R}(\epsilon,\beta_0,L_0)$ occurs at $\epsilon=n^{-\frac{\beta_0+1}{2\beta_0+1}}$. Below this level, we have $\mathcal{R}(\epsilon,\beta_0,L_0)\asymp n^{-\frac{2\beta_0}{2\beta_0+1}}$, and the contamination has no influence on the classical minimax rate. When $\epsilon$ is above $n^{-\frac{\beta_0+1}{2\beta_0+1}}$, the rate becomes $\epsilon^{\frac{2\beta_0}{\beta_0+1}}$, dominated by the contamination of data. Since we have about $n\epsilon$ contaminated observations in expectation, an optimal procedure can achieve the classical minimax rate $n^{-\frac{2\beta_0}{2\beta_0+1}}$ with at most $n\epsilon\leq n^{\frac{\beta_0}{2\beta_0+1}}$ contaminated data points. Note that the number $n^{\frac{\beta_0}{2\beta_0+1}}$ is an increasing function of $\beta_0$.

For the upper bound of the minimax rate, we again consider the kernel density estimator
$\wh{f}_h(0)=\frac{1}{n}\sum_{i=1}^n\frac{1}{h}K\left(\frac{X_i}{h}\right)$.
The error $\wh{f}_h(0)-f(0)$ can be decomposed as $(\wh{f}_h(0)-\mathbb{E}\wh{f}_h(0))+(\mathbb{E}_h\wh{f}(0)-f(0))$. Then, a direct analysis shows that the risk can be bounded by three terms,
\begin{equation}
\mathbb{E}\left(\wh{f}_h(0)-f(0)\right)^2\lesssim \frac{1}{nh}\vee h^{2\beta_0}\vee \frac{\epsilon^2}{h^2},\label{eq:RD-for-arb}
\end{equation}
which leads to the optimal choice of bandwidth $h=n^{-\frac{1}{2\beta_0+1}}\vee \epsilon^{\frac{1}{\beta_0+1}}$. It is interesting to note that this choice of bandwidth is always larger than or equal to $n^{-\frac{1}{2\beta_0+1}}$. Recall that when the contamination is smooth, the optimal bandwidth in Theorem \ref{thm:upperbound} is smaller than $n^{-\frac{1}{2\beta_0+1}}$. Thus, when there is contamination in the data, one may need to use a larger or smaller bandwidth compared with $n^{-\frac{1}{2\beta_0+1}}$ depending on the assumption of contamination.

The lower bound part of Theorem \ref{thm:minimax-arb} can be viewed as an application of Theorem 5.1 in \cite{chen2015robust}. A general lower bound for Huber's $\epsilon$-contamination model in \cite{chen2015robust} reveals a critical quantity called modulus of continuity, defined as
$$\omega(\epsilon)=\sup\left\{|f(0)-\wt{f}(0)|^2: \TV(P_f,P_{\wt{f}})\leq \epsilon/(1-\epsilon), f,\wt{f}\in\mathcal{P}(\beta_0,L_0)\right\}.$$
The definition of modulus of continuity goes back to \cite{donoho1994statistical,donoho1991geometrizing}, and its relation to Huber's $\epsilon$-contamination model is characterized in \cite{chen2015robust}. In the current setting, it can be shown that $\omega(\epsilon)\asymp \epsilon^{\frac{2\beta_0}{\beta_0+1}}$, which leads to the lower bound part of Theorem \ref{thm:minimax-arb}. In Section \ref{sec:pf-miss}, we will give an alternative self-contained proof of the lower bound.

\subsection{Adaptation to Either Contamination Proportion or Smoothness}\label{sec:ad-hb}

The key to adaptation to either contamination proportion or smoothness is the risk decomposition (\ref{eq:RD-for-arb}) of the kernel density estimator $\wh{f}_h(0)=\frac{1}{n}\sum_{i=1}^n\frac{1}{h}K\left(\frac{X_i}{h}\right)$. We write  (\ref{eq:RD-for-arb}) as the sum of two terms. That is,
\begin{equation}
\frac{1}{nh}\vee h^{2\beta_0}\vee \frac{\epsilon^2}{h^2}\asymp \left(\frac{\epsilon^2}{h^2}+\frac{1}{nh}\right) + h^{2\beta_0}.\label{eq:increasing-decreasing}
\end{equation}
The first term $\frac{\epsilon^2}{h^2}+\frac{1}{nh}$ is a decreasing function of $h$ with a possibly unknown $\epsilon$, while the second term $h^{2\beta_0}$ is an increasing function of $h$ with a possibly unknown $\beta_0$. If we know $\epsilon$ but do not know $\beta_0$, then we can use Lespki's method with $\frac{\epsilon^2}{h^2}+\frac{1}{nh}$ as a reference curve. On the other hand, if we know $\beta_0$ but do not know $\epsilon$, we can then use a reverse version of Lepski's method with $h^{2\beta_0}$ as a reference curve. Specifically, when $\epsilon$ is known but $\beta_0$ is unknown, we use
\begin{equation}
\wh h=\max\left\{h\in\mathcal{H}:|\wh f_h(0)-\wh f_l(0)|\leq c_1\left(\sqrt{\frac{\log n}{nl}}+\frac{\epsilon}{l}\right), \forall l\leq h, l\in\mathcal{H}\right\}.\label{eq:h-lep-epsilon}
\end{equation}
If the set that is maximized over is empty, we take $\wh{h}=\frac{1}{n}$.
When $\beta_0$ is known but $\epsilon$ is unknown, we use
\begin{equation}
\wh h=\min\Bigg\{h\in \mathcal{H}:|\wh f_h(0)-\wh f_l(0)|\leq c_1l^{\beta_0}, \forall l\geq h, l\in\mathcal{H}\Bigg\}.\label{eq:h-lep-beta}
\end{equation}
If the set that is minimized over is empty, we take $\wh{h}=1$.

Before stating the guarantee for $\wh f_{\wh h}(0)$, we want to emphasize that whether the contamination proportion $\epsilon$ is known or not is more than a matter of normalization. As a comparison, recall the risk decomposition for a kernel density estimator with structured contamination in (\ref{eq:RD-s}). There, both $h^{2\beta_0}$ and $\epsilon^2h^{2\beta_1}$ are increasing functions of $h$. This implies that simultaneous adaptation to both $\epsilon$ and $h$ is possible through Lepski's method, and whether $\epsilon$ is given or not only affects the normalization of the kernel density estimator, which is not the case for arbitrary contamination because of (\ref{eq:increasing-decreasing}).

\begin{thm}\label{thm:lepski3}
Consider the adaptive kernel density estimator $\wh{f}(0)=\wh{f}_{\wh{h}}(0)$ with the bandwidth $\wh{h}$ given by (\ref{eq:h-lep-epsilon}) or (\ref{eq:h-lep-beta}). In either case, we set $\mathcal{H}=\left\{1,\frac{1}{2},\cdots,\frac{1}{2^m}\right\}$ such that $\frac{1}{2^m}\leq \frac{1}{n}<\frac{1}{2^{m-1}}$ and $c_1$ to be a sufficiently large constant. The kernel $K$ is selected from $\mathcal{K}_l(L)$ with a large constant $l\geq \floor{\beta_0}$. Then, we have
$$\sup_{p(\epsilon,f,g)\in \mathcal M(\epsilon,\beta_0,L_0)}\mathbb{E}_{p^n}\left(\wh f(0)-f(0)\right)^2\lesssim \left(\frac{\log n}{n}\right)^{\frac{2\beta_0}{2\beta_0+1}}\vee \epsilon^{\frac{2\beta_0}{\beta_0+1}}.$$
\end{thm}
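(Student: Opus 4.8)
The plan is to treat the two cases — $\epsilon$ known with $\beta_0$ unknown, and $\beta_0$ known with $\epsilon$ unknown — through a single Lepski-type oracle inequality anchored on the risk decomposition (\ref{eq:RD-for-arb}) and its monotone splitting (\ref{eq:increasing-decreasing}). Write $S(h)=\sqrt{\log n/(nh)}$ for the stochastic scale, $B_1(h)=h^{\beta_0}$ for the nonparametric bias (bounded using $K\in\mathcal K_l(L)$ with $l\ge\floor{\beta_0}$), and $B_2(h)=\epsilon/h$ for the contamination bias (bounded by a $\TV$-type estimate as in the derivation of (\ref{eq:RD-for-arb})), so that $|\Expect\wh f_h(0)-f(0)|\lesssim B_1(h)+B_2(h)$. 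Define the oracle bandwidth $h^{\star}=(\log n/n)^{1/(2\beta_0+1)}\vee\epsilon^{1/(\beta_0+1)}$; it balances the two halves of the error, $S(h^{\star})\vee B_2(h^{\star})\lesssim B_1(h^{\star})\asymp(\log n/n)^{\beta_0/(2\beta_0+1)}\vee\epsilon^{\beta_0/(\beta_0+1)}$, whose square is precisely the target rate. In both cases the conclusion then reduces to proving $|\wh f_{\wh h}(0)-f(0)|\lesssim B_1(h^{\star})$ on a high-probability event $\mathcal A$ on which $|\wh f_h(0)-\Expect\wh f_h(0)|\le c_0 S(h)$ simultaneously over all $h\in\mathcal H$; such an event with $\Prob(\mathcal A^{c})\lesssim n^{-2}$ is produced by Bernstein's inequality and a union bound over $|\mathcal H|\asymp\log n$ (using $\|K\|_\infty\le L$ and the variance estimate $\Var(\wh f_h(0))\lesssim (h+\epsilon)/(nh^2)$, which holds even when the arbitrary contamination is singular), with the finest scales $h\asymp 1/n$ needing the extra care noted below. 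On $\mathcal A^{c}$ the crude bound $|\wh f_{\wh h}(0)-f(0)|^{2}\lesssim n^{2}$, valid since $\wh h\ge 1/(2n)$ and the target density is bounded, makes that contribution to the risk negligible.

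For the case $\epsilon$ known, $\beta_0$ unknown, the reference curve $\psi(l)=S(l)+\epsilon/l$ in (\ref{eq:h-lep-epsilon}) is decreasing, so this is standard Lepski. Let $h_\star$ be the largest element of $\mathcal H$ not exceeding $h^{\star}$, so that $h^{\star}/2\le h_\star\le h^{\star}$. On $\mathcal A$, for every $l\le h_\star$ one has, by the triangle inequality, $|\wh f_{h_\star}(0)-\wh f_l(0)|\le c_0(S(h_\star)+S(l))+B_1(h_\star)+B_2(h_\star)+B_1(l)+B_2(l)\lesssim\psi(l)$, where one uses $B_1(h_\star)\le B_1(h^{\star})\lesssim\psi(h^{\star})\le\psi(l)$, $B_1(l)\le\psi(l)$, and $S(h_\star)\le\sqrt2\,S(l)$; hence $h_\star$ is feasible in (\ref{eq:h-lep-epsilon}) once $c_1$ is large, so $\wh h\ge h_\star$. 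Then, invoking the defining inequality of $\wh h$ with $l=h_\star\le\wh h$ and adding the error of $\wh f_{h_\star}(0)$, $|\wh f_{\wh h}(0)-f(0)|\le|\wh f_{\wh h}(0)-\wh f_{h_\star}(0)|+|\wh f_{h_\star}(0)-f(0)|\lesssim\psi(h_\star)+S(h_\star)+B_1(h_\star)+B_2(h_\star)\lesssim\psi(h^{\star})\asymp B_1(h^{\star})$, and squaring gives the stated rate.

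The case $\beta_0$ known, $\epsilon$ unknown, is the mirror image, now with the increasing reference curve $\phi(l)=l^{\beta_0}$ in (\ref{eq:h-lep-beta}) (reverse Lepski). Let $h^{\star\star}$ be the smallest element of $\mathcal H$ no smaller than $h^{\star}$, so $h^{\star}\le h^{\star\star}\le 2h^{\star}$. On $\mathcal A$, for every $l\ge h^{\star\star}$ the decreasing part at $l$ satisfies $S(l)+B_2(l)\le S(h^{\star})+B_2(h^{\star})\lesssim B_1(h^{\star})\le B_1(l)=\phi(l)$, while $B_1(h^{\star\star})\lesssim B_1(h^{\star})\le\phi(l)$; hence $|\wh f_{h^{\star\star}}(0)-\wh f_l(0)|\lesssim\phi(l)$, so $h^{\star\star}$ is feasible in (\ref{eq:h-lep-beta}) and $\wh h\le h^{\star\star}$. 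Using the defining inequality of $\wh h$ with $l=h^{\star\star}\ge\wh h$ and adding the error of $\wh f_{h^{\star\star}}(0)$ gives $|\wh f_{\wh h}(0)-f(0)|\lesssim\phi(h^{\star\star})+S(h^{\star\star})+B_2(h^{\star\star})\lesssim B_1(h^{\star})$, and squaring again yields the rate.

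The step I expect to require genuine work is the uniform deviation bound underlying $\mathcal A$: it must hold down to bandwidths $h\asymp 1/n$ and under a mixture whose contaminating component need not even possess a density, so the $\epsilon/(nh^2)$ piece of the variance and the $\|K\|_\infty/h\lesssim n$ sub-Gaussian-breaking term in Bernstein's bound must both be tracked and shown not to spoil the $S(h)$ scaling on the band of bandwidths that the feasibility steps confine $\wh h$ to. The second subtlety is structural: one must verify that $\psi$ in (\ref{eq:h-lep-epsilon}) and $\phi$ in (\ref{eq:h-lep-beta}) are the correct monotone majorants of the \emph{other} bias term — in the reverse-Lepski case $B_2(h)=\epsilon/h$ is dominated by $\phi(h)=h^{\beta_0}$ only for $h\gtrsim h^{\star}$, which is exactly the range the feasibility argument produces, so the two halves of the proof have to be arranged to be mutually consistent. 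Once this scaffolding is in place, both cases collapse to the classical Lepski bookkeeping above.
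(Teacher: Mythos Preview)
Your plan is correct and, for the forward Lepski case (\ref{eq:h-lep-epsilon}), essentially coincides with the paper's: the paper simply says ``the result follows the same argument as the proof of Theorem~\ref{thm:lepski}'', which is exactly the oracle-bandwidth/feasibility/Bernstein bookkeeping you describe, with the threshold $\psi(l)=S(l)+\epsilon/l$ absorbing the extra contamination variance you flag.

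For the reverse Lepski case (\ref{eq:h-lep-beta}) the paper takes a slightly different technical route worth noting. Rather than controlling $|\wh f_h(0)-\Expect\wh f_h(0)|$ under the full mixture (and then tracking the $\epsilon/(nh^2)$ variance contribution), the paper invokes Theorem~\ref{thm:arbitrarylepski}: it conditions on the binomial number $a$ of uncontaminated observations, works on the event $\{(n-a)/n\le 2\epsilon\}$, and splits $\wh f_h(0)$ into the clean sum $\frac{1}{n}\sum_{i\le a}h^{-1}K(X_i/h)$ and the contaminated sum $\frac{1}{n}\sum_{i>a}h^{-1}K(X_i/h)$. The latter is bounded \emph{deterministically} by $O(\epsilon/h)$ once $(n-a)/n\le 2\epsilon$, so the Bernstein step runs only over i.i.d.\ draws from $f$, with $\sigma^2\lesssim 1/h$ and $M\lesssim 1/h$; the event you call $\mathcal A$ then holds at the pure $S(h)$ scale with no contamination correction at all. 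Your direct approach---Bernstein on the mixture, then arguing that on the relevant band $h\ge h^{\star}$ one has $\epsilon/h\lesssim 1$ so $\epsilon/(nh^2)\lesssim 1/(nh)$---also works and keeps the two cases symmetric, but the conditioning trick buys a cleaner deviation bound and removes the need to argue post hoc that the bandwidths in play stay inside the ``safe'' range. Either way the final rate is the same.
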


With one of $\epsilon$ and $\beta_0$ given, Theorem \ref{thm:lepski3} guarantees adaptive estimation with the rate $\left(\frac{\log n}{n}\right)^{\frac{2\beta_0}{2\beta_0+1}}\vee \epsilon^{\frac{2\beta_0}{\beta_0+1}}$. Compared with the minimax rate in Theorem \ref{thm:minimax-arb}, we have an extra logarithmic factor due to the ignorance of either $\epsilon$ or $\beta_0$. This logarithmic factor cannot be removed by any adaptive procedure in view of the results of \cite{brown1996constrained,lepski1997optimal,cai2003rates}.

\subsection{Adaptation to Both Contamination Proportion and Smoothness?}

When both contamination proportion and smoothness are unknown, the adaptation theory with arbitrary contamination is completely different from the case with structured contamination. Since there is no constraint on the contamination distribution, a model with $(\epsilon,\beta_0)$ can also be written as a different model with $(\wt{\epsilon},\wt{\beta}_0)$. As a consequence, we can prove the following lower bound.
\begin{lemma}\label{thm:unidentifiable}
For any constants $c_1,c_2>0$, there exists a constant $c_0$, such that for any $\beta_0,\wt{\beta}_0\leq c_1$, and any $L_0, \wt{L_0}\geq c_2$, and any estimator $\wh{f}(0)$, one of the following lower bounds must be true,
\begin{eqnarray*}
\sup_{p(\epsilon,f,g)\in\mathcal{M}(\epsilon,\beta_0,L_0)}\mathbb{E}_{p^n}\left(\wh{f}(0)-f(0)\right)^2 \geq c_0\epsilon^{\frac{2\wt{\beta}_0}{\wt{\beta}_0+1}}, \\
\sup_{p(0,f,g)\in\mathcal{M}(0,\wt{\beta}_0,\wt{L}_0)}\mathbb{E}_{p^n}\left(\wh{f}(0)-f(0)\right)^2 \geq c_0\epsilon^{\frac{2\wt{\beta}_0}{\wt{\beta}_0+1}}.
\end{eqnarray*}
\end{lemma}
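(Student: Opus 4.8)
The plan is to exploit the unconstrained nature of the contamination to set up a two-point (actually two-pair) testing problem in which the ``clean'' model with smoothness $\wt{\beta}_0$ is statistically indistinguishable from a contaminated model with smoothness $\beta_0$. Concretely, I would pick a bump $\phi$ built from the function $l$ of Lemma \ref{lem:a}, rescaled so that $\phi_{\wt h}(x) = c\,\wt h^{\wt\beta_0} l(x/\wt h)$ is supported near $0$, satisfies $\phi_{\wt h}\in\Sigma(\wt\beta_0,\wt L_0)$ for a suitable constant $c$, and has $\phi_{\wt h}(0)\asymp \wt h^{\wt\beta_0}$. Set $\wt h$ at the critical level $\wt h\asymp \epsilon^{\frac{1}{\wt\beta_0+1}}$, so that $\wt h^{\wt\beta_0}\asymp \epsilon^{\frac{\wt\beta_0}{\wt\beta_0+1}}$ and $\epsilon^{\frac{2\wt\beta_0}{\wt\beta_0+1}}$ is exactly the target separation. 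Take a fixed reference density $f_0$ that lies in $\mathcal{P}(\wt\beta_0,\wt L_0)\cap\mathcal{P}(\beta_0,L_0)$ and is bounded below near $0$ (e.g.\ a rescaled version of the $a$ from Lemma \ref{lem:a} shifted so $a(0)>0$, or simply a smooth density positive at $0$), and define $f_0^+ = f_0+\phi_{\wt h}$ (renormalized), which still lies in both H\"older classes for $\epsilon$ small because $\int\phi_{\wt h}\to 0$ and the perturbation of the H\"older seminorm is controlled.

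Next I would produce the two competing data-generating distributions. On the ``clean, smoothness $\wt\beta_0$'' side, take $f = f_0^+$, $G$ irrelevant, $\epsilon_{\mathrm{eff}}=0$, so the data law is $P_{f_0^+}$ and the functional value is $f_0^+(0) = f_0(0)+\Theta(\epsilon^{\frac{\wt\beta_0}{\wt\beta_0+1}})$. On the ``contaminated, smoothness $\beta_0$'' side, take $f = f_0 \in \mathcal{P}(\beta_0,L_0)$ with functional value $f_0(0)$, and choose the contamination distribution $G$ so that $(1-\epsilon)P_{f_0} + \epsilon G = P_{f_0^+}$; this is possible precisely when $\epsilon G = P_{f_0^+}-(1-\epsilon)P_{f_0} = \epsilon P_{f_0} + (\phi_{\wt h}\text{ term})$ defines a genuine probability measure, i.e.\ when $\frac{1}{\epsilon}\big[P_{f_0^+} - (1-\epsilon)P_{f_0}\big]\ge 0$ and integrates to $1$. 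Since $\phi_{\wt h}\ge -O(\wt h^{\wt\beta_0})$ pointwise while $f_0$ is bounded below by a positive constant on the (vanishing) support of $\phi_{\wt h}$, for all small $\epsilon$ (hence small $\wt h$) the density $f_0 + \epsilon^{-1}(\phi_{\wt h}\text{-part})$ is nonnegative, so $G$ is a valid arbitrary contamination — this is where we crucially use that $G$ is unconstrained. Thus both $P_{f_0^+}\in$ (image of $\mathcal{M}(0,\wt\beta_0,\wt L_0)$) and $P_{f_0^+}\in \mathcal{M}(\epsilon,\beta_0,L_0)$ arise from the \emph{identical} sampling distribution, so the same data $X_1,\dots,X_n$ has the same law under both.

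Because the two models induce the same distribution of the sample but the target functionals differ by $\asymp \epsilon^{\frac{\wt\beta_0}{\wt\beta_0+1}}$, a standard two-point / Le Cam argument (or directly: for any estimator $\wh f(0)$, $\mathbb{E}|\wh f(0)-f_0(0)|^2 + \mathbb{E}|\wh f(0)-f_0^+(0)|^2 \ge \tfrac12|f_0(0)-f_0^+(0)|^2$ under the common law) forces at least one of the two suprema in the statement to be $\gtrsim \epsilon^{\frac{2\wt\beta_0}{\wt\beta_0+1}}$, which is the claimed dichotomy with $c_0$ depending only on $c_1,c_2$ through the H\"older-scaling constants. The main obstacle I anticipate is the bookkeeping ensuring $G\ge 0$ uniformly in the regime of interest: one must choose the shape and location of the bump $\phi_{\wt h}$ (and possibly subtract a compensating positive bump elsewhere to keep $\int G = 1$ while staying nonnegative, much as in the construction preceding Theorem \ref{thm:smoothlowerbound}), and verify that $f_0^+$ remains in $\mathcal{P}(\wt\beta_0,\wt L_0)$ after renormalization — both are ``routine but delicate'' constant-chasing steps, and they are the reason the conclusion is only a lower bound on one of the two models rather than simultaneously on both.
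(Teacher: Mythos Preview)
Your proposal is correct and follows essentially the same two-point strategy as the paper: construct a single data-generating distribution that simultaneously lies in $\mathcal{M}(\epsilon,\beta_0,L_0)$ (viewed as $(1-\epsilon)f_0+\epsilon G$) and in $\mathcal{M}(0,\wt\beta_0,\wt L_0)$ (viewed as a clean density $\wt f$), with the two target values $f_0(0)$ and $\wt f(0)$ separated by $\asymp\epsilon^{\wt\beta_0/(\wt\beta_0+1)}$, then invoke Le Cam.

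The only notable difference is in the building blocks. You work with a compactly supported bump $c\,\wt h^{\wt\beta_0}l(\cdot/\wt h)$ added to a reference density, which then forces you to (i) renormalize, (ii) verify that the induced contamination measure $G=\epsilon^{-1}\big(P_{f_0^+}-(1-\epsilon)P_{f_0}\big)$ is nonnegative, and (iii) possibly add compensating bumps. The paper sidesteps all of this bookkeeping by taking both $f$ and $g$ to be rescaled Gaussians: $f(x)=c_3\phi(c_3x)$ and $g(x)=c_4\epsilon^{-1/(\wt\beta_0+1)}\phi\big(c_4x/\epsilon^{1/(\wt\beta_0+1)}\big)$, and then \emph{defining} $\wt f=(1-\epsilon)f+\epsilon g$ directly. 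Because $g$ is already a bona fide density, no nonnegativity check on the contamination is needed; and because $\epsilon g(x)=c_4\epsilon^{\wt\beta_0/(\wt\beta_0+1)}\phi(\cdot)$ has exactly the H\"older scaling $h^{\wt\beta_0}$ with $h\asymp\epsilon^{1/(\wt\beta_0+1)}$, membership $\wt f\in\mathcal{P}(\wt\beta_0,\wt L_0)$ is immediate from the uniform boundedness of Gaussian derivatives. Your route works but the Gaussian construction buys a cleaner one-line verification and avoids the ``routine but delicate constant-chasing'' you anticipated.
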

Lemma \ref{thm:unidentifiable} says that in order for any estimator to adapt to two classes with different contamination proportions and smoothness indices, say $\mathcal{M}(\epsilon,\beta_0,L_0)$ and $\mathcal{M}(0,\wt{\beta}_0,\wt{L}_0)$, it is impossible to achieve a rate that is better than $\epsilon^{\frac{2\wt{\beta}_0}{\wt{\beta}_0+1}}$ across both classes. The lower bound $\epsilon^{\frac{2\wt{\beta}_0}{\wt{\beta}_0+1}}$ is a function of both $\epsilon$, the contamination proportion of the first class $\mathcal{M}(\epsilon,\beta_0,L_0)$, and $\wt{\beta}_0$, the smoothness index of the second class $\mathcal{M}(0,\wt{\beta}_0,\wt{L}_0)$. As we will show in the following, this specific form has a profound implication, in that an adaptive estimation rate that is a function of an individual class is impossible!

As a first step, the following definition formulates what adaptivity means in our specific setting.
\begin{definition}
An estimator $\wh f(0)$ is called $(c_1,c_2,c_3,r_1(\cdot),r_2(\cdot))$ rate adaptive if the following holds: for any $n\geq 1$, any $\epsilon\leq 1/2$, any $\beta_0\leq c_1$ and any $L_0\leq c_2$, we have
\begin{equation}
\sup_{p(\epsilon,f,g)\in\mathcal{M}(\epsilon,\beta_0,L_0)}\mathbb{E}_{p^n}\left(\wh{f}(0)-f(0)\right)^2\leq c_3n^{-r_1(\beta_0)}\vee\epsilon^{r_2(\beta_0)}.\label{eq:adaptive-def}
\end{equation}
\end{definition}
 As concrete examples, when the contamination distribution is restricted to those with density functions that are H\"{o}lder smooth, it is shown in Theorem \ref{thm:lepski} that adaptive estimation is possible with some $r_1(\beta_0)<\frac{2\beta_0}{2\beta_0+1}$ and $r_2(\beta_0)=2$. When the contamination distribution is arbitrary, Theorem \ref{thm:lepski3} shows that adaptive estimation is possible over $(\epsilon,\beta_0)$ if either $\epsilon$ or $\beta_0$ is fixed (known) with some $r_1(\beta_0)<\frac{2\beta_0}{2\beta_0+1}$ and $r_2(\beta_0)=\frac{2\beta_0}{\beta_0+1}$.
 In contrast, the following theorem shows that such a goal is impossible for any $r_1(\cdot)$ and $r_2(\cdot)$ when both $\epsilon$ and $\beta_0$ are unknown.

\begin{thm}\label{thm:impossible}
For any constants $c_1,c_2,c_3>0$ and any positive functions $r_1(\cdot)$ and $r_2(\cdot)$, there is no estimator $\wh f(0)$ that is $(c_1,c_2,c_3,r_1(\cdot),r_2(\cdot))$ rate adaptive.
\end{thm}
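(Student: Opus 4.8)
The plan is to deduce the theorem from Lemma~\ref{thm:unidentifiable} by a quantifier-chasing argument. Suppose, towards a contradiction, that some estimator $\wh f(0)$ is $(c_1,c_2,c_3,r_1(\cdot),r_2(\cdot))$ rate adaptive. First I would invoke Lemma~\ref{thm:unidentifiable} with its two free constants set to $c_1$ and $c_2$, obtaining the constant $c_0$; the H\"older radii can be pinned at $L_0=\wt L_0=c_2$, which simultaneously meets the lemma's lower-radius requirement and the ``$L_0\le c_2$'' condition in the definition of rate adaptivity. For every $\beta_0,\wt\beta_0\le c_1$ and every pair $(n,\epsilon)$ with $\epsilon\le 1/2$, the lemma then guarantees that at least one of its two displayed lower bounds holds. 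Writing $\rho:=\tfrac{2\wt\beta_0}{\wt\beta_0+1}$ and pairing each lower bound with the upper bound supplied by rate adaptivity --- noting that for the clean class $\mathcal{M}(0,\wt\beta_0,\wt L_0)$ one has $\epsilon=0$ and $r_2(\wt\beta_0)>0$, so its claimed bound is just $c_3 n^{-r_1(\wt\beta_0)}$ --- the argument reduces to the claim that at least one of
\[
c_3\, n^{-r_1(\beta_0)}\ \ge\ c_0\,\epsilon^{\rho},\qquad
c_3\, \epsilon^{r_2(\beta_0)}\ \ge\ c_0\,\epsilon^{\rho},\qquad
c_3\, n^{-r_1(\wt\beta_0)}\ \ge\ c_0\,\epsilon^{\rho}
\]
must be true. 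The remaining task is to choose $\beta_0,\wt\beta_0,n,\epsilon$ so that all three fail, which gives the desired contradiction.

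The key is to decouple the two smoothness indices. I would fix $\beta_0=c_1$, so that $r_1(\beta_0)$ and $r_2(\beta_0)$ become fixed positive numbers, and then pick $\wt\beta_0\le c_1$ small enough that $\rho=\tfrac{2\wt\beta_0}{\wt\beta_0+1}<r_2(\beta_0)$; this is possible since $\tfrac{2\wt\beta_0}{\wt\beta_0+1}\to 0$ as $\wt\beta_0\to 0$, and it is essential to keep $\beta_0$ bounded away from $0$, because $r_2(\beta_0)$ may itself vanish as $\beta_0\to 0$ (so the naive choice $\beta_0=\wt\beta_0$ need not work). With $\rho<r_2(\beta_0)$ fixed, the middle inequality becomes $c_3\,\epsilon^{\,r_2(\beta_0)-\rho}\ge c_0$, which fails once $\epsilon\le 1/2$ is small enough, since the exponent $r_2(\beta_0)-\rho$ is positive. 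Fixing such an $\epsilon$, the first and third inequalities pit $n^{-r_1(\beta_0)}$ and $n^{-r_1(\wt\beta_0)}$ --- both with fixed positive exponents --- against the fixed positive quantity $c_0\epsilon^\rho$, so both fail for all sufficiently large $n$. This regime ($\epsilon$ a fixed constant, $n\to\infty$) is precisely the contamination-dominated regime in which Lemma~\ref{thm:unidentifiable} is informative, so there is no conflict with any implicit requirement relating $n$ and $\epsilon$ in that lemma. Having forced all three inequalities to be false, we contradict the trichotomy above, which establishes the theorem.

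Given Lemma~\ref{thm:unidentifiable}, the argument is elementary: the only points requiring care are the order in which the parameters are chosen ($\beta_0$, then $\wt\beta_0$ and hence $\rho$, then $\epsilon$, then $n$) and the bookkeeping of the radius constants. I expect no serious obstacle at this level; the genuine difficulty of the impossibility phenomenon is concentrated in Lemma~\ref{thm:unidentifiable}, whose proof must produce --- for a contaminated class of smoothness $\beta_0$ and a clean class of a much smaller smoothness $\wt\beta_0$ --- two data-generating laws that agree as observable distributions (or are $O(n^{-1})$-close in $\chi^2$) while forcing $|f(0)-\wt f(0)|^2\gtrsim \epsilon^{\rho}$, and that is the step I would regard as the heart of the matter.
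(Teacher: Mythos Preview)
Your proposal is correct and follows essentially the same route as the paper: assume a rate-adaptive estimator, combine the two upper bounds from the definition with the dichotomy of Lemma~\ref{thm:unidentifiable} at $L_0=\wt L_0=c_2$, reduce to the trichotomy $c_3[\epsilon^{r_2(\beta_0)}\vee n^{-r_1(\beta_0)}\vee n^{-r_1(\wt\beta_0)}]\ge c_0\epsilon^{\rho}$, and then select $\beta_0,\wt\beta_0,\epsilon,n$ so that all three branches fail. The only cosmetic difference is the order of choices: the paper fixes $\beta_0$, then picks $\epsilon$ (via an auxiliary exponent $a\in(0,r_2(\beta_0))$), then $\wt\beta_0$ with $\rho<a$, then $n$; you instead fix $\beta_0=c_1$, pick $\wt\beta_0$ so that $\rho<r_2(\beta_0)$, then $\epsilon$, then $n$. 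Both orderings are valid, and yours is arguably a bit cleaner since it avoids the auxiliary $a$.
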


The impossibility result of Theorem \ref{thm:impossible} is a consequence of Lemma \ref{thm:unidentifiable}. The lower bound $\epsilon^{\frac{2\wt{\beta}_0}{\wt{\beta}_0+1}}$ in Lemma \ref{thm:unidentifiable} involves an $\epsilon$ and a $\wt{\beta}$ from two different classes. This leads to a contradiction given the definition of adaptivity in (\ref{eq:adaptive-def}). A rigorous proof of this argument will given in Section \ref{sec:pf-imp}.

In conclusion, when the contamination is arbitrary, the theory of adaptation to both contamination proportion and smoothness is qualitatively different from adaptation to only one of them. In comparison, when the contamination is structured, that difference is just quantitative according to the results in Section \ref{sec:as}. Therefore, in order to achieve sensible error rates adaptively in a robust density estimation context, we need to either assume a given contamination proportion, a given smoothness index, or a structured contamination distribution.


\section{Discussion}\label{sec:discussion}

\subsection{Extensions to Multivariate Settings}

The results in the paper can all be extended to robust multivariate density estimation. We define a $d$-dimensional isotropic H\"{o}lder class as follows,
$$\Sigma_d(\beta,L)=\left\{f:\mathbb{R}^d\rightarrow\mathbb{R}\Bigg|\max_{l\in I(\beta)}\left|\nabla_lf(x_1)-\nabla_lf(x_2)\right|\leq L\|x_1-x_2\|^{\beta-\floor{\beta}}\text{ for any }x_1,x_2\in\mathbb{R}^d\right\},$$
where we use $I(\beta)$ to denote the set of multi-indices $\{l=(l_1,...,l_d)\big| l_1+\cdots+l_d=\floor{\beta}\}$. The class of density functions is defined as
$$\mathcal{P}_d(\beta,L)=\left\{f:\mathbb{R}^d\rightarrow[0,\infty)\Bigg| f\in\Sigma_d(\beta,L), \int f=1\right\}.$$
Note that the dimension $d$ is assumed to be a constant. Then, the two contamination models considered in the paper are extended as
$$\mathcal{M}_d(\epsilon,\beta_0,\beta_1,L_0,L_1,m)=\left\{(1-\epsilon)f+\epsilon g\Big| f\in \mathcal{P}_d(\beta_0,L_0), g\in \mathcal{P}_d(\beta_1,L_1), g(0)\leq m\right\},$$
and
$$\mathcal{M}_d(\epsilon,\beta_0,L_0)=\left\{(1-\epsilon)P_f+\epsilon G\Big| f\in\mathcal{P}_d(\beta_0,L_0)\text{ and } G\text{ is an arbitrary distribution}\right\}.$$
Similarly, we can define the corresponding minimax rates $\mathcal R_d(\epsilon,\beta_0,\beta_1,L_0,L_1,m)$ and $\mathcal R_d(\epsilon,\beta_0,L_0)$.
\begin{thm}\label{thm:multi}
For the two contamination models on $\mathbb{R}^d$, we have
$$\mathcal R_d(\epsilon,\beta_0,\beta_1,L_0,L_1,m)\asymp [n^{-\frac{2\beta_0}{2\beta_0+d}}]\vee[\epsilon^2(1\wedge m)^2]\vee[n^{-\frac{2\beta_1}{2\beta_1+d}}\epsilon^{\frac{2d}{2\beta_1+d}}],$$
and
$$\mathcal R_d(\epsilon,\beta_0,L_0)\asymp [n^{-\frac{2\beta_0}{2\beta_0+d}}]\vee [\epsilon^{\frac{2\beta_0}{\beta_0+d}}].$$
\end{thm}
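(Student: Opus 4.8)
The plan is to mirror the one-dimensional arguments behind Theorems \ref{thm:minimax-rate} and \ref{thm:minimax-arb}, tracking how each estimate scales with the dimension $d$. For the upper bounds I would use the multivariate kernel density estimator $\wh f_h(0)=\frac{1}{n(1-\epsilon)}\sum_{i=1}^n\frac{1}{h^d}K(X_i/h)$ in the structured case and $\wh f_h(0)=\frac{1}{n}\sum_{i=1}^n\frac{1}{h^d}K(X_i/h)$ in the arbitrary case, where $K:\mathbb{R}^d\to\mathbb{R}$ is a bounded, square-integrable kernel of order $\floor{\beta_0\vee\beta_1}$ (respectively $\floor{\beta_0}$); such a $K$ can be taken as the tensor product of univariate kernels from $\mathcal{K}_l(L)$, which automatically kills all mixed moments up to the required order. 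The decomposition \eqref{eq:error-decomp} carries over verbatim, and in the analogue of \eqref{eq:RD-s} the only changes are that the stochastic error becomes $\frac{1}{nh^d}$ while, by the isotropic H\"older condition, the kernel-convolution approximation error is still $h^{2\beta_0}+\epsilon^2h^{2\beta_1}$ and the contamination-at-$0$ term is still $\epsilon^2(1\wedge m)^2$. Optimizing $\frac{1}{nh^d}\vee h^{2\beta_0}\vee\epsilon^2h^{2\beta_1}\vee\epsilon^2(1\wedge m)^2$ at $h=n^{-\frac{1}{2\beta_0+d}}\wedge n^{-\frac{1}{2\beta_1+d}}\epsilon^{-\frac{2}{2\beta_1+d}}$ gives the first rate; for arbitrary contamination the contamination bias is bounded by $\epsilon\norm{K}_\infty h^{-d}$, so the analogue of \eqref{eq:RD-for-arb} is $\frac{1}{nh^d}\vee h^{2\beta_0}\vee\frac{\epsilon^2}{h^{2d}}$, and $h=n^{-\frac{1}{2\beta_0+d}}\vee\epsilon^{\frac{1}{\beta_0+d}}$ produces the second rate.

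For the lower bounds I would construct $d$-dimensional analogues of the bump functions of Lemmas \ref{lem:a} and \ref{lem:b}, e.g.\ radial versions built from $l(\norm{x})$, retaining compact support, the prescribed normalizations/vanishing of integrals, the rescaled smoothness membership $c\,b(c\,\cdot)\in\Sigma_d(\beta,L)\cap\mathcal{C}^\infty(\mathbb{R}^d)$, and $a(0)=0$. The two-point (and, for the third term, $\chi^2$-based) constructions from the lower-bound part of Section \ref{sec:ms} are then reproduced with each univariate rescaling $\cdot/h$ replaced by its multivariate counterpart on $\mathbb{R}^d$; the single essential arithmetic change is that an $L^2$-type integral of a bump of height $\delta$ and width $h$ now scales as $\delta^2h^d$ rather than $\delta^2h$, because of the volume of a $d$-dimensional ball. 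Plugging this into the testing bound of Lemma \ref{lem:lowerbound} replaces every $2\beta_1+1$ by $2\beta_1+d$: the perturbation scale for the third term becomes $\wt h\asymp(n\epsilon^2)^{-\frac{1}{2\beta_1+d}}$, giving the unavoidable error $\epsilon^2\wt h^{2\beta_1}\asymp n^{-\frac{2\beta_1}{2\beta_1+d}}\epsilon^{\frac{2d}{2\beta_1+d}}$, whereas the $\epsilon^2(1\wedge m)^2$ term (insensitive to the spatial scaling) and the classical $n^{-\frac{2\beta_0}{2\beta_0+d}}$ term come from the same arguments as before. For the arbitrary-contamination lower bound I would recompute the modulus of continuity $\omega(\epsilon)=\sup\{|f(0)-\wt f(0)|^2:\TV(P_f,P_{\wt f})\le\epsilon/(1-\epsilon),\ f,\wt f\in\mathcal{P}_d(\beta_0,L_0)\}$: taking $\wt f-f$ to be a bump of height $\delta\asymp h^{\beta_0}$ and width $h$ gives $\TV\asymp\delta h^d$, so the optimum sits at $h\asymp\epsilon^{\frac{1}{\beta_0+d}}$ and $\omega(\epsilon)\asymp\epsilon^{\frac{2\beta_0}{\beta_0+d}}$, after which Theorem 5.1 of \cite{chen2015robust} (or the self-contained argument of Section \ref{sec:pf-miss}) yields the bound.

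The main obstacle is not any single sharp inequality but the careful construction of the $d$-dimensional building blocks: one needs compactly supported $\mathcal{C}^\infty$ functions on $\mathbb{R}^d$ that are simultaneously isotropic-H\"older after rescaling, carry the prescribed values and integrals on and around $0$, and---for the kernels---have all mixed moments up to the prescribed order vanishing. Tensor products dispose of the kernels, and radial mollifier constructions dispose of the analogues of $a$ and $b$; once these are in hand the remainder is bookkeeping of the volume factor $h^d$ in every bias and variance term. A secondary point worth verifying is that the ``$1\wedge m$'' truncation in the second term and the sandwich inequalities of Section \ref{sec:minimax-s} persist unchanged, which they do, since that term never interacts with the spatial scale.
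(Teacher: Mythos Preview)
Your proposal is correct and follows essentially the same route as the paper: product kernels for the upper bound, the radial mollifier $l_d(x)=l(\|x\|)$ for the lower-bound constructions, and throughout replacing the $1$-dimensional volume factor $h$ by $h^d$ in each variance and $L^2$ integral. The paper's proof is in fact just a one-paragraph sketch pointing to exactly these substitutions, so your write-up is, if anything, more detailed than the original.
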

The extra factor of dimension $d$ makes the interpretation of results even more interesting. For example, the phase transition boundary of $\mathcal{R}_d(\epsilon,\beta_0,L_0)$ now occurs at $\epsilon=n^{-\frac{\beta_0+d}{2\beta_0+d}}$. This implies that the influence of contamination becomes more severe as the dimension grows. In contrast, the minimax rate of $\mathcal R_d(\epsilon,\beta_0,\beta_1,L_0,L_1,m)$ leads to a completely different interpretation. For example, when $m\geq 1$, we have
$$\mathcal R_d(\epsilon,\beta_0,\beta_1,L_0,L_1,m)\asymp n^{-\frac{2\beta_0}{2\beta_0+d}}\vee\epsilon^2.$$
The second term $\epsilon^2$ does not change with the dimension $d$, and the phase transition boundary between $n^{-\frac{2\beta_0}{2\beta_0+d}}$ and $\epsilon^2$ is at $\epsilon=n^{-\frac{\beta_0}{2\beta_0+d}}$, which increases with respect to $d$. This suggests that the influence of contamination becomes less severe as $d$ grows. In short, the contamination influence on density estimation can be drastically different in a multivariate setting, depending on whether the contamination distribution is structured or arbitrary.

\subsection{Consistency in the Hardest Scenario}

When there is no constraint on the contamination distribution, adaptation is impossible over both contamination proportion and smoothness in the sense of (\ref{eq:adaptive-def}). One may wonder whether there is still anything to do in such a scenario with almost nothing is assumed.
In this section, we show that consistency is still possible under this hardest scenario.

Before introducing the procedure, we remark that achieving consistency without knowing $\epsilon$ and $\beta_0$ is a non-trivial problem due to the risk decomposition (\ref{eq:RD-for-arb}) for a kernel density estimator. According to (\ref{eq:RD-for-arb}), a choice of bandwidth that leads to consistency must satisfy $nh\rightarrow\infty$, $h\rightarrow 0$ and $h/\epsilon\rightarrow\infty$. Note that the first and the second requirements can be satisfied easily with a choice of $h$ that does not depend on any model parameter. For example, one can choose $h=n^{-1/2}$. However, the third requirement $h/\epsilon\rightarrow\infty$ is problematic without the knowledge of $\epsilon$. For any choice of $h\rightarrow 0$, there is an adversarial $\epsilon$ to make $h/\epsilon\rightarrow\infty$ fail.

Despite the above difficulty, we show that a data-driven bandwidth leads to consistency if we know that the smoothness $\beta_0$ has a lower bound $\wt{\beta}_0$. We consider a kernel density estimator $\wh f_h(0)=\frac{1}{n}\sum_{i=1}^n\frac{1}{h}K\left(\frac{X_i}{h}\right)$. Then, we choose $h$ by the reverse version of Lepskis' method that is similar to (\ref{eq:h-lep-beta}). We define $\wh{h}$ by
\begin{equation}
\wh h=\min\Bigg\{h\in \mathcal{H}:|\wh f_h(0)-\wh f_l(0)|\leq c_1l^{\wt\beta_0}, \forall l\geq h, l\in\mathcal{H}\Bigg\}.\label{eq:h-lep-beta-wt}
\end{equation}
Again, we use the convention that if the set that is minimized over is empty, we take $\wh{h}=1$.
\begin{thm}\label{thm:arbitrarylepski}
Consider the kernel density estimator $\wh{f}(0)=\wh{f}_{\wh{h}}(0)$ with the bandwidth $\wh{h}$ given by (\ref{eq:h-lep-beta-wt}). We set $\mathcal{H}=\left\{1,\frac{1}{2},\cdots,\frac{1}{2^m}\right\}$ such that $\frac{1}{2^m}\leq \frac{1}{n}<\frac{1}{2^{m-1}}$ and $c_1$ to be a sufficiently large constant. The kernel $K$ is selected from $\mathcal{K}_l(L)$ with a large constant $l\geq \floor{\beta_0}$. Then, as $n\rightarrow\infty$ and $\epsilon\rightarrow 0$. we have
$$\sup_{p(\epsilon,f,g)\in \mathcal M(\epsilon,\beta_0,L_0)}\mathbb{E}_{p^n}\left(\wh f(0)-f(0)\right)^2\rightarrow 0,$$
if $\beta_0\geq \wt{\beta}_0$.
\end{thm}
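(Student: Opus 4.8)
The plan is a reverse Lepski-type analysis based on the variance–bias decomposition (\ref{eq:RD-for-arb}) of $\wh f_h(0)$, whose only new ingredient beyond the usual argument is the elementary fact that $h^{\beta_0}\le h^{\wt\beta_0}$ for $h\le 1$ — valid exactly because $\beta_0\ge\wt\beta_0$. Set
$$
\rho_n=\pth{\frac{\log n}{n}}^{\frac{1}{2\wt\beta_0+1}}\vee\epsilon^{\frac{1}{\wt\beta_0+1}},
$$
and let $h_0\in\mathcal H$ be the smallest grid bandwidth with $h_0\ge C_0\rho_n$, for a large constant $C_0$ fixed below; $h_0$ is only a proof device, so it may depend on the unknown $\epsilon$. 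Since $n\to\infty$ and $\epsilon\to 0$ we have $\rho_n\to 0$, hence $h_0\asymp\rho_n\to 0$ and $\tfrac1{2n}\le h_0\le 1$ eventually. First I would fix the good event. Because $G$ is arbitrary, $\Var\pth{\tfrac1hK(X_i/h)}\lesssim\tfrac1h+\tfrac\epsilon{h^2}$, and for $h\ge h_0$ the second term is absorbed into the first since $\epsilon/h_0\le C_0^{-1}\epsilon^{\wt\beta_0/(\wt\beta_0+1)}\le 1$. A Bernstein inequality and a union bound over the $O(\log n)$ bandwidths $h\in\mathcal H$ with $h\ge h_0$ then give, for $c_0$ a sufficiently large constant,
$$
\Prob(E^c)\lesssim\frac{\log n}{n^4},\qquad E:=\sth{\,|\wh f_h(0)-\Expect\wh f_h(0)|\le c_0\sqrt{\tfrac{\log n}{nh}}\text{ for all }h\in\mathcal H,\ h\ge h_0\,}.
$$

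The crux is to show $\wh h\le h_0$ on $E$. Using the deviation bound together with the standard bias estimate $|\Expect\wh f_h(0)-f(0)|\lesssim h^{\beta_0}+\epsilon/h\le h^{\wt\beta_0}+\epsilon/h$ for a kernel of order $\ge\floor{\beta_0}$, and the elementary bounds $h_0^{\beta_0},l^{\beta_0}\le l^{\wt\beta_0}$, $\epsilon/l\le\epsilon/h_0$ and $\sqrt{\log n/(nl)}\le\sqrt{\log n/(nh_0)}$ valid for $h_0\le l\le 1$, one gets for every $l\in\mathcal H$ with $l\ge h_0$
$$
|\wh f_{h_0}(0)-\wh f_l(0)|\ \lesssim\ c_0\sqrt{\tfrac{\log n}{nh_0}}+l^{\wt\beta_0}+\frac\epsilon{h_0}.
$$
The choice of $h_0$ makes $\sqrt{\log n/(nh_0)}\le C_0^{-1/2}h_0^{\wt\beta_0}$ and $\epsilon/h_0\le C_0^{-1}h_0^{\wt\beta_0}$, so the right side is at most $\pth{c_0C_0^{-1/2}+C+C'C_0^{-1}}l^{\wt\beta_0}$; picking $C_0$ large and then $c_1$ a sufficiently large constant makes this $\le c_1 l^{\wt\beta_0}$. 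Hence $h_0$ belongs to the set minimized over in (\ref{eq:h-lep-beta-wt}), so $\wh h\le h_0$ on $E$.

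It remains to assemble the bound. On $E$, the selection constraint applied at $l=h_0\ge\wh h$ gives $|\wh f_{\wh h}(0)-\wh f_{h_0}(0)|\le c_1 h_0^{\wt\beta_0}$, so
$$
|\wh f_{\wh h}(0)-f(0)|\le c_1 h_0^{\wt\beta_0}+|\wh f_{h_0}(0)-\Expect\wh f_{h_0}(0)|+|\Expect\wh f_{h_0}(0)-f(0)|\lesssim h_0^{\wt\beta_0}+\sqrt{\tfrac{\log n}{nh_0}}+\frac\epsilon{h_0}\lesssim\rho_n^{\wt\beta_0}\to 0 .
$$
For the expected squared error, split on $E$ and $E^c$: the $E$-part is dominated by the square of the last display, which tends to $0$; on $E^c$ use the deterministic bounds $|\wh f_{\wh h}(0)|\le\|K\|_\infty/\wh h\le 2n\|K\|_\infty$ and $|f(0)|\le C(\beta_0,L_0)$ (a H\"older density on $\mathbb R$ is uniformly bounded), so the $E^c$-part is $\lesssim n^2\Prob(E^c)\lesssim\log n/n^2\to 0$. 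Adding the two pieces proves $\sup_{p\in\mathcal M(\epsilon,\beta_0,L_0)}\Expect_{p^n}(\wh f(0)-f(0))^2\to 0$.

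The step I expect to be the main obstacle is exhibiting the deterministic bandwidth $h_0$ of the right order and showing it lies in the selection set on $E$: this is where $\beta_0\ge\wt\beta_0$ is essential (it lets the reference curve $c_1 l^{\wt\beta_0}$ dominate the actual smoothness bias $h^{\beta_0}$ at every scale $l\ge h_0$), and it is what frees us from having to lower-bound $\wh h$ — the reverse comparison then transfers the quality of $\wh f_{h_0}(0)$ to $\wh f_{\wh h}(0)$ automatically. The other point requiring care, absent in the uncontaminated problem, is that an arbitrary $G$ inflates both the variance and the bias of $\wh f_h(0)$ by terms of order $\epsilon/h$ rather than $\epsilon$; these are exactly what dictate the $\epsilon^{1/(\wt\beta_0+1)}$ term in $\rho_n$, and they are controlled uniformly over $h\ge h_0$ by that choice.
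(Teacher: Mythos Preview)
Your proposal is correct and follows the same reverse-Lepski strategy as the paper: both identify an oracle bandwidth $h_*\asymp(\log n/n)^{1/(2\wt\beta_0+1)}\vee\epsilon^{1/(\wt\beta_0+1)}$, show $\wh h\le h_*$ with high probability, and then transfer the quality of $\wh f_{h_*}(0)$ to $\wh f_{\wh h}(0)$ via the selection rule. The one substantive technical difference is in how the contamination enters the concentration step. The paper conditions on the random number $a$ of uncontaminated observations, so that the contaminated part $\frac{1}{n}\sum_{i>a}h^{-1}K(X_i/h)$ becomes a deterministic quantity bounded by $O(\epsilon/h)$ on the event $\{(n-a)/n\le 2\epsilon\}$, and Bernstein is applied only to the clean sum; this forces a separate treatment of the regime $\epsilon<8\log n/n$ (where the binomial tail bound degrades). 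You instead work unconditionally, absorbing the contamination into the variance bound $\Var(h^{-1}K(X/h))\lesssim h^{-1}+\epsilon h^{-2}$ and noting that for $h\ge h_0$ the extra term is harmless since $\epsilon/h_0\lesssim 1$. Your route is slightly more streamlined---no conditioning layer, no small-$\epsilon$ case split---while the paper's decomposition makes the role of the contaminated observations more explicit. Both arguments hinge on exactly the same use of $\beta_0\ge\wt\beta_0$ to dominate the true bias $h^{\beta_0}$ by the reference curve $h^{\wt\beta_0}$.
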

Note that the requirements $n\rightarrow\infty$ and $\epsilon\rightarrow 0$ are necessary conditions of consistency given the minimax rate (\ref{eq:easy-minimax}). The procedure does not require knowledge of $\epsilon$ or $\beta_0$, and thus consistency can be achieved without knowing $\epsilon$ and $\beta_0$ even if adaptation is impossible. The procedure (\ref{eq:h-lep-beta-wt}) uses a conservative $\wt{\beta}_0$ in the reverse version of Lepski's method, and can be viewed as an extension of (\ref{eq:h-lep-beta}) that uses the true smoothness index $\beta_0$.


\section{Proofs}\label{sec:proof}

\subsection{Proofs of Theorem \ref{thm:upperbound}  and Theorem \ref{thm:fixedbandwidth}}\label{sec:pf-upper-structure}

\begin{proof}[Proof of Theorem \ref{thm:upperbound}]
Decompose the error as
\begin{align*}
\wh{f}(0)-f(0)=(\wh{f}(0)-\mathbb{E}\wh{f}(0))+\left(\mathbb{E}\wh{f}(0)-f(0)-\frac{\epsilon}{1-\epsilon}g(0)\right)+\frac{\epsilon}{1-\epsilon}g(0),
\end{align*}
where the first term is the stochastic error, the second term stands for bias, and the third term is the misspecification error caused by contamination.

For the variance term, we have
\begin{align*}
\mathbb{E}(\wh{f}(0)-\mathbb{E}\wh{f}(0))^2=\Var\left(\frac{\sum_{i=1}^n \frac{1}{h}K\left(\frac{X_i}{h}\right)}{n(1-\epsilon)}\right)=\frac{\Var(\frac{1}{h}K(\frac{X}{h}))}{n(1-\epsilon)^2},
\end{align*}
where
\begin{align*}
\Var\left(\frac{1}{h}K\left(\frac{X}{h}\right)\right)\leq \int\frac{1}{h^2}K^2\left(\frac{x}{h}\right)((1-\epsilon)f(x)+\epsilon g(x))dx\lesssim \frac{1}{h}\int \frac{1}{h}K^2\left(\frac{x}{h}\right)dx\lesssim \frac{1}{h}.
\end{align*}
This gives the variance bound
\begin{equation}\label{eq:term1}
\mathbb{E}(\wh{f}(0)-\mathbb{E}\wh{f}(0))^2\lesssim \frac{1}{nh}.
\end{equation}

For the bias term we have
\begin{align*}
\mathbb{E} \wh{f}(0)=\int \frac{1}{h}K\left(\frac{x}{h}\right)f(x)dx+\frac{\epsilon}{1-\epsilon}\int  \frac{1}{h}K\left(\frac{x}{h}\right)g(x)dx.
\end{align*}
Since $f\in\mathcal{P}(\beta_0,L_0)$ and $g\in\mathcal{P}(\beta_1,L_1)$, we have $|\int \frac{1}{h}K\left(\frac{x}{h}\right)(f(x)-f(0))dx|\lesssim h^{\beta_0}$ and $|\int \frac{1}{h}K\left(\frac{x}{h}\right)(g(x)-g(0))dx|\lesssim h^{\beta_1}$. See \cite[Chapter 1.2]{tsybakov09} for an explicit bias calculation. Adding up the two bias bounds, we get
\begin{equation}\label{eq:term2}
\left|\mathbb{E}\wh{f}(0)-f(0)-\frac{\epsilon}{1-\epsilon}g(0)\right|\lesssim h^{\beta_0}+\epsilon h^{\beta_1}.
\end{equation}

For the last term, it is easy to see that
\begin{equation}\label{eq:term3}
\left(\frac{\epsilon}{1-\epsilon}g(0)\right)^2\lesssim \epsilon^2(m\wedge 1)^2,
\end{equation}
since $g(0)\leq m$ by the assumption and $g(0)\lesssim 1$ by the fact that $g\in\mathcal{P}(\beta_1,L_1)$.

With the relation $\mathbb{E}(A_1+A_2+A_3)^2\lesssim \mathbb{E}A_1^2+ \mathbb{E}A_2^2 + \mathbb{E}A_3^2$ and the three bounds in (\ref{eq:term1}), (\ref{eq:term2}) and (\ref{eq:term3}), we conclude the proof by the specific choice of $h=n^{-\frac{1}{2\beta_0+1}}\wedge n^{-\frac{1}{2\beta_1+1}}\epsilon^{-\frac{2}{2\beta_1+1}}$.
\end{proof}

\begin{proof}[Proof of Theorem \ref{thm:fixedbandwidth}]
The error decomposes as
\begin{align*}
\wh f(0)-f(0)=(\wh f(0)-\mathbb{E}\wh f(0))+(\mathbb{E}\wh f(0)-(1-\epsilon)f(0)-\epsilon g(0))+\epsilon(g(0)-f(0)).
\end{align*}
Using the same argument that leads to (\ref{eq:term1}), we have $\mathbb{E}(\wh f(0)-\mathbb{E}\wh f(0))^2\lesssim \frac{1}{nh}$ for the variance term.
The bias term $(\mathbb{E}\wh f(0)-(1-\epsilon)f(0)-\epsilon g(0))$ can be further decomposed as
\begin{align*}
(1-\epsilon)\int \frac{1}{h}K\left(\frac{x}{h}\right)(f(x)-f(0))dx+\epsilon\int \frac{1}{h}K\left(\frac{x}{h}\right)(g(x)-g(0))dx.
\end{align*}
Therefore, the same argument that leads to (\ref{eq:term2}) also gives the bound
\begin{align*}
|\mathbb{E}\wh f(0)-(1-\epsilon)f(0)-\epsilon g(0)|\lesssim h^{\beta_0} + \epsilon h^{\beta_1}.
\end{align*}
For the last term, we have
$\epsilon|g(0)-f(0)|\lesssim\epsilon$. Combining the three bounds above, we have
$$\mathbb{E}\left(\wh f(0)-f(0)\right)^2\lesssim \frac{1}{nh}+h^{2\beta_0}+\epsilon^2.$$
Choose $h=n^{-\frac{1}{2\beta_0+1}}$, and the proof is complete.
\end{proof}

\subsection{Proof of Theorem \ref{thm:smoothlowerbound}}\label{sec:pf-lower-structure}

The proof of Theorem \ref{thm:smoothlowerbound} mainly relies on Le Cam's two-point argument. The method is summarized by the following lemma.
\begin{lemma}\label{lem:lowerbound}
Consider two distributions $P_{\theta_0}$ and $P_{\theta_1}$ whose parameters of interest are separated by $\Delta=|T_{\theta_0}-T_{\theta_1}|$. Assume
$\chi^2\left(P_{\theta_0},P_{\theta_1}\right)\leq \alpha$.
Then, we have
$$\inf_{\wh{T}}\sup_{\theta\in\{\theta_0,\theta_1\}}\mathbb{E}_{\theta}\left(\wh{T}-T_{\theta}\right)^2\geq \frac{1}{8}e^{-\alpha}\Delta^2.$$
\end{lemma}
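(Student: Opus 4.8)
The plan is to run Le Cam's two-point method and then convert the resulting total-variation affinity into the stated exponential function of the chi-squared divergence. If $\chi^2(P_{\theta_0},P_{\theta_1})=+\infty$ the claimed inequality is trivial, so throughout we may assume $P_{\theta_0}\ll P_{\theta_1}$ and fix a common dominating measure $\mu$ with densities $p_0,p_1$.

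First I would establish the generic two-point bound
$$\inf_{\wh T}\sup_{\theta\in\{\theta_0,\theta_1\}}\mathbb{E}_\theta(\wh T-T_\theta)^2\ \geq\ \frac{\Delta^2}{4}\bigl(1-\TV(P_{\theta_0},P_{\theta_1})\bigr).$$
For any (possibly randomized) estimator $\wh T$, conditioning on its internal randomness,
$$\mathbb{E}_{\theta_0}(\wh T-T_{\theta_0})^2+\mathbb{E}_{\theta_1}(\wh T-T_{\theta_1})^2=\int\bigl[(\wh T-T_{\theta_0})^2 p_0+(\wh T-T_{\theta_1})^2 p_1\bigr]d\mu\ \geq\ \int\min_{t\in\mathbb R}\bigl[(t-T_{\theta_0})^2 p_0+(t-T_{\theta_1})^2 p_1\bigr]d\mu.$$
Minimizing the quadratic in $t$ pointwise gives inner value $\frac{p_0p_1}{p_0+p_1}\Delta^2$, and since $\frac{p_0p_1}{p_0+p_1}\geq\frac12\min(p_0,p_1)$ (harmonic-versus-arithmetic mean), the right-hand side is at least $\frac{\Delta^2}{2}\int\min(p_0,p_1)\,d\mu=\frac{\Delta^2}{2}(1-\TV(P_{\theta_0},P_{\theta_1}))$. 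Taking the larger of the two risks on the left yields the displayed bound.

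Second, I would lower bound the affinity using the chi-squared hypothesis. By the Bretagnolle--Huber inequality, $1-\TV(P_{\theta_0},P_{\theta_1})\geq\frac12\exp(-\mathrm{KL}(P_{\theta_0}\|P_{\theta_1}))$, where $\mathrm{KL}(P_{\theta_0}\|P_{\theta_1})=\int\log(dP_{\theta_0}/dP_{\theta_1})\,dP_{\theta_0}$. A single Jensen step then gives
$$\mathrm{KL}(P_{\theta_0}\|P_{\theta_1})\ \leq\ \log\int\frac{dP_{\theta_0}}{dP_{\theta_1}}\,dP_{\theta_0}\ =\ \log\Bigl(1+\chi^2(P_{\theta_0},P_{\theta_1})\Bigr)\ \leq\ \chi^2(P_{\theta_0},P_{\theta_1})\ \leq\ \alpha,$$
using $\log(1+x)\leq x$ and the convention $\chi^2(\mathbb P,\mathbb Q)=\int d\mathbb P^2/d\mathbb Q-1$ from the paper. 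Hence $1-\TV(P_{\theta_0},P_{\theta_1})\geq\frac12 e^{-\alpha}$, and combining with the first step gives $\inf_{\wh T}\sup_\theta\mathbb{E}_\theta(\wh T-T_\theta)^2\geq\frac{\Delta^2}{8}e^{-\alpha}$.

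There is no serious obstacle; this is a textbook Le Cam argument. The only points needing care are (i) getting the constant exactly $\tfrac18$, which is why I use the pointwise quadratic minimization — giving the factor $\tfrac12$ in front of $\int\min(p_0,p_1)$ — rather than the cruder reduction to a testing problem; and (ii) keeping the directions of the chi-squared and Kullback--Leibler divergences aligned with the paper's convention, so that the Jensen bound on $\mathrm{KL}(P_{\theta_0}\|P_{\theta_1})$ is exactly controlled by the assumed bound on $\chi^2(P_{\theta_0},P_{\theta_1})$. (In the applications, $P_{\theta_0},P_{\theta_1}$ are $n$-fold products, and one uses $\chi^2(p_0^{\,n},p_1^{\,n})=(1+\chi^2(p_0,p_1))^n-1\lesssim 1$ when $\chi^2(p_0,p_1)\lesssim n^{-1}$, so that $\alpha=O(1)$ and $e^{-\alpha}\asymp 1$.)
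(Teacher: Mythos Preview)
Your proof is correct and complete. The paper does not actually prove this lemma; it merely cites \cite{yu1997assouad} and \cite[Chapter 2.3]{tsybakov09} for the argument, so there is no in-paper proof to compare against. Your route---pointwise quadratic minimization to get the $\tfrac{\Delta^2}{4}(1-\TV)$ bound, followed by Bretagnolle--Huber and the Jensen bound $\mathrm{KL}\leq\log(1+\chi^2)\leq\chi^2$---is exactly the standard derivation found in those references and yields the constant $\tfrac18$ on the nose. The directions of $\chi^2$ and $\mathrm{KL}$ are correctly aligned with the paper's convention $\chi^2(\mathbb P,\mathbb Q)=\int d\mathbb P^2/d\mathbb Q-1$, and your closing remark about how the lemma is applied (with $n$-fold products and $\chi^2(p_0,p_1)\lesssim n^{-1}$ forcing $\alpha=O(1)$) matches precisely how the paper uses it in the proofs of Lemmas~\ref{lem:term1} and~\ref{lem:term3}.
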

We refer the readers to \cite{yu1997assouad} and \cite[Chapter 2.3]{tsybakov09} for rigorous proofs.
In the setting of Theorem \ref{thm:smoothlowerbound}, we need to find two pairs of density functions $(f,g)$ and  $(\wt f,\wt g)$ that satisfy $f,\wt f\in \mathcal{P}(\beta_0,L_0)$, $g,\wt g\in \mathcal{P}(\beta_1,L_1)$ and  $g(0)\vee \wt g(0)\leq m$. Since we are working with i.i.d. observations, it is sufficient to show that
\begin{align*}
\chi^2\left(p(\epsilon,\wt f,\wt g),p(\epsilon,f,g)\right)\lesssim n^{-1}.
\end{align*}
Then, Lemma \ref{lem:lowerbound} implies $\mathcal R(\epsilon,\beta_0,\beta_1,L_0,L_1,m)\gtrsim |f(0)-\wt f(0)|^2$.

The lower bound of Theorem \ref{thm:smoothlowerbound} contains three terms. We thus split the proof into three parts, and then combine the three arguments in the end.
\begin{lemma}\label{lem:term1}
We have
\begin{align*}
\mathcal R(\epsilon,\beta_0,\beta_1,L_0,L_1,m)\gtrsim n^{-\frac{2\beta_0}{2\beta_0+1}}.
\end{align*}
\end{lemma}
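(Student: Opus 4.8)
The plan is to reduce Lemma~\ref{lem:term1} to the classical pointwise minimax lower bound for nonparametric density estimation, which is already in the literature (see \cite[Chapter~2]{tsybakov09}), by exhibiting a two-point family that stays entirely \emph{inside} the contamination class $\mathcal M(\epsilon,\beta_0,\beta_1,L_0,L_1,m)$. The key observation is that contamination can only make the problem harder, so it suffices to use a single fixed contamination density $g$ for both hypotheses and perturb only the target density $f$ at the usual bandwidth scale.

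Concretely, I would proceed as follows. First, fix a contamination density: take $g(x)=\wt g(x)=c_1 a(c_1 x)$ with $a$ as in Lemma~\ref{lem:a}, choosing the constant $c_1$ so that $c_1 a(c_1\cdot)\in\mathcal P(\beta_1,L_1)\cap\mathcal C^\infty(\mathbb R)$ (property~3 of Lemma~\ref{lem:a}) and so that $g(0)=0\leq m$ (property~2). Second, pick a base density $f_0\in\mathcal P(\beta_0,L_0)$ that is bounded away from $0$ on a neighbourhood of the origin and is infinitely smooth; one may again use a rescaled bump, e.g.\ $f_0=c_2 a(c_2\cdot)$ up to a shift, or any standard smooth density. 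Third, set $h\asymp n^{-\frac{1}{2\beta_0+1}}$ and define
$$
f(x)=f_0(x),\qquad \wt f(x)=f_0(x)+c_3\, h^{\beta_0}\, b\!\left(\frac{x}{h}\right),
$$
with $b$ as in Lemma~\ref{lem:b} and $c_3>0$ a small constant chosen so that $\wt f\in\mathcal P(\beta_0,L_0)$: the perturbation $c_3 h^{\beta_0}b(x/h)$ has Hölder norm of order $c_3$ (by the scaling property~2 of Lemma~\ref{lem:b}), integrates to zero (property~4), and is $O(h^{\beta_0})$ in sup norm, so for $h$ small it keeps $\wt f$ a nonnegative density in the class. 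Since $b$ is bounded below on $[-1/4,1/4]$, we get $|f(0)-\wt f(0)|=c_3\,h^{\beta_0}|b(0)|\asymp h^{\beta_0}\asymp n^{-\frac{\beta_0}{2\beta_0+1}}$, hence $|f(0)-\wt f(0)|^2\asymp n^{-\frac{2\beta_0}{2\beta_0+1}}$.

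Fourth, I would bound the chi-square divergence between the two mixtures. Because $g=\wt g$, the two data-generating densities are $p=(1-\epsilon)f_0+\epsilon g$ and $\wt p=(1-\epsilon)\wt f+\epsilon g=p+(1-\epsilon)c_3 h^{\beta_0}b(x/h)$; the perturbation is supported on $[-h,h]$ and $p$ is bounded below by a positive constant there (since $f_0$ is), so
$$
\chi^2(\wt p,p)=\int\frac{(\wt p-p)^2}{p}\lesssim \int_{-h}^{h}\frac{c_3^2 h^{2\beta_0} b^2(x/h)}{p(x)}\,dx\lesssim h^{2\beta_0+1}\asymp n^{-1}.
$$
Then $\chi^2(\wt p^{\,\otimes n},p^{\otimes n})=(1+\chi^2(\wt p,p))^n-1\lesssim 1$, so Lemma~\ref{lem:lowerbound} with $\alpha=O(1)$ and $\Delta\asymp n^{-\frac{\beta_0}{2\beta_0+1}}$ yields $\mathcal R(\epsilon,\beta_0,\beta_1,L_0,L_1,m)\gtrsim n^{-\frac{2\beta_0}{2\beta_0+1}}$.

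I do not anticipate a serious obstacle here: this is essentially the textbook argument, and the only points requiring care are (i) checking that the fixed $g$ simultaneously satisfies $g\in\mathcal P(\beta_1,L_1)$ and $g(0)\leq m$ for \emph{all} admissible $m\geq 0$ (the choice $g(0)=0$ handles this uniformly), and (ii) verifying that the perturbed $\wt f$ remains a bona fide density in $\mathcal P(\beta_0,L_0)$, which is where the small constant $c_3$ and the normalization properties of $b$ in Lemma~\ref{lem:b} are used. One could alternatively cite the classical result directly and simply remark that enlarging the model by allowing contamination cannot decrease the minimax risk, since the no-contamination subfamily (take $g$ irrelevant, or $\epsilon$ effectively absorbed) is contained in $\mathcal M$; but giving the explicit two-point construction keeps the proof self-contained and parallel to the constructions used for the other two terms.
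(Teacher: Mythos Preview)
Your proposal is correct and follows essentially the same route as the paper: fix $g=\wt g=c_1 a(c_1\cdot)$, take $f_0$ a smooth density bounded below near $0$ (the paper uses a centered Gaussian), perturb by $c\,h^{\beta_0}b(x/h)$ with $h=n^{-1/(2\beta_0+1)}$, and apply Le Cam's two-point bound via the $\chi^2$ calculation. The only cosmetic caveat is your tentative choice $f_0=c_2 a(c_2\cdot)$, which vanishes at $0$ and so is not bounded below there without the shift you mention; sticking with a Gaussian (as the paper does) avoids this wrinkle.
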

\begin{proof}
The proof uses a similar argument in \cite[Chapter 2.5]{tsybakov09}. Since we are dealing with a setting with contamination, we still give a proof to be self contained.
We define the following four functions,
\begin{align*}
g(x)&=\wt g(x)=c_{1}a(c_{1}x),\\
f(x)&=f_0(x),\\
\wt f(x)&=f_0(x)+c_{2}h^{\beta_0}b\left(\frac{x}{h}\right). 
\end{align*}
Here, we take $f_0$ as the density function of some normal distribution with mean zero so that $f_0\in\mathcal{P}(\beta_0,L_0/2)$. The functions $a(x)$ and $b(x)$ are given by Lemma \ref{lem:a} and Lemma \ref{lem:b}.
We first verify that for appropriate choices of $c_1,c_2$ and $h\leq 1$, the constructed functions are well-defined densities in the desired parameter spaces.
\begin{itemize}
  \item We have $f\in \mathcal{P}(\beta_0,L_0)$ by construction. Since $h\leq 1$, $b(x/h)$ is compactly supported on an area where $f_0$ is lower bounded by some positive constant. Thus, with a $c_2>0$ that is sufficiently small, $\wt f$ is nonnegative. The fact $\int \wt{f}=1$ can be derived from the property of $b$ in Lemma \ref{lem:b}. Hence, $\wt f\in \mathcal{P}(\beta_0,L_0)$ when $c_{2}$ is small enough.
  \item With a sufficiently small $c_1>0$, we have $g,\wt{g}\in \mathcal{P}(\beta_1,L_1)$.
  \item By $a(0)=0$ according to Lemma \ref{lem:a}, we get $|g(0)|\vee |\wt g(0)|\leq m$.
\end{itemize}
We use the notation $p=(1-\epsilon)f+\epsilon g$ and $q=(1-\epsilon)\wt{f}+\epsilon\wt{g}$. Note that $p$ can be lower bounded by a positive constant on the interval $[-1,1]$ according to its definition. Moreover, we have
$$p(x)-q(x)=-(1-\epsilon)c_{2}h^{\beta_0}b\left(\frac{x}{h}\right),$$
and the support of $b\left(\frac{x}{h}\right)$ is $[-h,h]\subset[-1,1]$. This leads to the bound
$$\chi^2(q,p)=\int_{-1}^1\frac{(p-q)^2}{p}\lesssim \int (p-q)^2 \asymp  h^{2\beta_0}\int b^2\left(\frac{x}{h}\right)\asymp {h}^{2\beta_0+1}.$$
In order that $n\chi^2(q,p)\lesssim 1$, we can choose $h=n^{-\frac{1}{2\beta_0+1}}$. This leads to
$$|f(0)-\wt f(0)|\asymp n^{-\frac{\beta_0}{2\beta_0+1}}.$$
Use Lemma \ref{lem:lowerbound}, and the proof is complete.
\end{proof}

\begin{lemma}\label{lem:term2}
We have
\begin{align*}
\mathcal R(\epsilon,\beta_0,\beta_1,L_0,L_1,m)\gtrsim \epsilon^2(1\wedge m)^2.
\end{align*}
\end{lemma}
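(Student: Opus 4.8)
The plan is to use Le Cam's two-point method, as summarized in Lemma \ref{lem:lowerbound}, with the explicit construction already previewed in Section \ref{sec:minimax-s}. Recall the four functions
\begin{align*}
f(x)&=f_0(x),\\
\wt f(x) &=f_0(x)+c_1\tfrac{\epsilon}{1-\epsilon}(m\wedge 1)b(x),\\
g(x)&=c_2a(c_2x)+c_1(m\wedge 1)b(x),\\
\wt g(x)&=c_2a(c_2x),
\end{align*}
where $f_0$ is a fixed smooth density (say a mean-zero Gaussian lying in $\mathcal{P}(\beta_0,L_0/2)$), and $a,b$ are the bump functions from Lemma \ref{lem:a} and Lemma \ref{lem:b}. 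First I would verify that for suitably small constants $c_1,c_2>0$ the four functions are genuine densities in the prescribed classes: $\wt f\geq 0$ because $b$ is supported where $f_0$ is bounded below and the perturbation amplitude $c_1\tfrac{\epsilon}{1-\epsilon}(m\wedge1)$ is small (using $\epsilon\le 1/2$); $\int\wt f=1$ because $\int b=0$; the H\"older constraints with radius $L_0$ (resp.\ $L_1$) hold by Lemma \ref{lem:b} (resp.\ Lemma \ref{lem:a}) after shrinking $c_1,c_2$; and $g(0)=\wt g(0)=c_1(m\wedge1)b(0)\le m$ since $a(0)=0$, $b(0)$ is an absolute constant, and $m\wedge1\le m$ — here one picks $c_1$ so that $c_1 b(0)\le 1$.

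The key algebraic point, which I would record next, is that the two mixtures coincide exactly:
\[
(1-\epsilon)f+\epsilon g \;=\; (1-\epsilon)f_0+\epsilon\bigl(c_2a(c_2\cdot)+c_1(m\wedge1)b\bigr)
\;=\;(1-\epsilon)\wt f+\epsilon\wt g,
\]
because the $c_1(m\wedge1)b$ contribution can be attributed either to $\epsilon g$ or, via the $(1-\epsilon)$-weighted term $c_1\tfrac{\epsilon}{1-\epsilon}(m\wedge1)b$ in $\wt f$, to the $(1-\epsilon)\wt f$ part. Since $p(\epsilon,f,g)=p(\epsilon,\wt f,\wt g)$ as densities, the two product measures $p^n$ are identical, so $\chi^2\bigl(p(\epsilon,\wt f,\wt g),p(\epsilon,f,g)\bigr)=0\lesssim n^{-1}$ trivially. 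Then Lemma \ref{lem:lowerbound} gives
\[
\mathcal R(\epsilon,\beta_0,\beta_1,L_0,L_1,m)\;\gtrsim\;|f(0)-\wt f(0)|^2
\;=\;\Bigl(c_1\tfrac{\epsilon}{1-\epsilon}(m\wedge1)\,b(0)\Bigr)^2\;\asymp\;\epsilon^2(1\wedge m)^2,
\]
using $b(0)\gtrsim 1$ (Lemma \ref{lem:b}, property 3) and $1-\epsilon\in[1/2,1]$.

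The only genuinely delicate point is the bookkeeping of constants in the verification step: one must choose $c_1$ small enough that simultaneously (i) $\wt f\ge 0$, (ii) $c_1 b\in\Sigma(\beta_0,L_0/2)$ so that $\wt f\in\mathcal P(\beta_0,L_0)$, (iii) $c_1(m\wedge1)b$ added to $c_2a(c_2\cdot)$ stays nonnegative and in $\mathcal P(\beta_1,L_1)$, and (iv) $c_1 b(0)\le 1$; and then $c_2$ small enough that $c_2a(c_2\cdot)\in\mathcal P(\beta_1,L_1/2)$. All of these are satisfiable because $b$ and $a$ have bounded derivatives of every order and compact support, and each constraint only forces $c_1,c_2$ below some threshold depending on $\beta_0,\beta_1,L_0,L_1$; I expect no real obstacle, just careful ordering of the choices. (Strictly, a two-point bound with $\chi^2=0$ is just the trivial observation that no test can separate identical distributions, so one could also phrase this part without invoking Lemma \ref{lem:lowerbound} at all.)
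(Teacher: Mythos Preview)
Your proposal is correct and follows essentially the same construction and argument as the paper's proof; the only cosmetic difference is that you carry $(m\wedge 1)$ throughout while the paper first reduces to the case $m\le 1$ and then writes $m$. One tiny slip: you write $g(0)=\wt g(0)=c_1(m\wedge 1)b(0)$, but in fact $\wt g(0)=c_2a(0)=0$, which of course still satisfies $\wt g(0)\le m$ trivially.
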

\begin{proof}
By \cite{tsybakov09}, for any $p\in\mathcal{P}(\beta,L)$, there exists a constant $p_{max}$ such that $\sup_x|p(x)|\leq p_{max}$. Therefore, it is sufficient to consider $m$ that is bounded by some constant, say $m\leq 1$.
Consider the following four functions,
\begin{align*}
f(x)&=f_0(x),\\
\wt f(x) &=f_0(x)+c_1\frac{\epsilon}{1-\epsilon}mb(x),\\
g(x)&=c_2a(c_2x)+c_1mb(x),\\
\wt g(x)&=c_2a(c_2x).
\end{align*}
Here, we take $f_0$ as the density function of some normal distribution with mean zero so that $f_0\in\mathcal{P}(\beta_0,L_0/2)$. The functions $a(x)$ and $b(x)$ are given by Lemma \ref{lem:a} and Lemma \ref{lem:b}.
With appropriate choices of the constants $c_1,c_2>0$, $f,\wt{f},g,\wt{g}$ are well-defined density functions that belong to the desired function classes.
\begin{itemize}
  \item By Lemma \ref{lem:a}, 
  We have $f_0\in\mathcal{P}(\beta_0,L_0/2)\subset \mathcal{P}(\beta_0,L_0)$ by construction. Since $f_0$ is strictly positive on $[-1,1]$ and $b$ is compactly supported on $[-1,1]$, we have $\wt f\in \mathcal{P}(\beta_0,L_0)$ for some sufficiently small constant $c_1>0$ according to the properties of $b$ listed in Lemma \ref{lem:b}.
  \item By definition of $a$, we have $\wt g \in \mathcal{P}(\beta_1,L_1/2)$ for some sufficiently small $c_2>0$ according to Lemma \ref{lem:a}. Since $b(x)$ only takes negative values when $c_2a(c_2x)$ is lower bounded by a positive constant, $g$ is nonnegative and $g\in \mathcal{P}(\beta_1,L_1)$ when $c_1$ is small enough.
  \item We also have $|g(0)|\vee |\wt g(0)|\leq m$ for a sufficiently small $c_1$ because $a(0)=0$ and $|b(0)|$ is bounded by a constant according to Lemma \ref{lem:a} and Lemma \ref{lem:b}.
\end{itemize}
In summary, we have
$$(1-\epsilon)f+\epsilon g, (1-\epsilon)\wt{f}+\epsilon\wt{g}\in \mathcal{M}(\epsilon,\beta_0,\beta_1,L_0,L_1,m).$$
Moreover, according to our construction, we have
$$(1-\epsilon)f+\epsilon g= (1-\epsilon)\wt{f}+\epsilon\wt{g},$$
and
$$|f(0)-\wt{f}(0)|=c_1\frac{\epsilon}{1-\epsilon}m|b(0)|\gtrsim m\epsilon,$$
where we have used $|b(0)|\gtrsim 1$ by Lemma \ref{lem:b}.
Finally, using Lemma \ref{lem:lowerbound}, we obtain the desired lower bound result.
\end{proof}

\begin{lemma}\label{lem:term3}
Assume $\beta_1\leq \beta_0$ and $n\epsilon^2\geq 1$. Then, we have
\begin{align*}
\mathcal R(\epsilon,\beta_0,\beta_1,L_0,L_1,m)\gtrsim n^{-\frac{2\beta_1}{2\beta_1+1}}\epsilon^{\frac{2}{2\beta_1+1}}.
\end{align*}
\end{lemma}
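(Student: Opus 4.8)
The plan is to invoke Le Cam's two-point inequality (Lemma~\ref{lem:lowerbound}) with the two pairs $(f,g)$ and $(\wt f,\wt g)$ indicated right after the statement of Theorem~\ref{thm:smoothlowerbound}. With $l,a,b$ as in Lemmas~\ref{lem:a} and~\ref{lem:b}, let $f_0$ be a centered Gaussian density lying in the interior class $\mathcal P(\beta_0,L_0/2)$, put
\begin{align*}
B_h(x) &= h^{\beta_0}l\left(\tfrac{x}{h}\right)-h^{\beta_0}l\left(\tfrac{2(x-c_4)}{h}\right)-h^{\beta_0}l\left(\tfrac{2(x+c_4)}{h}\right),\\
f(x) &= f_0(x),\qquad \wt f(x)=f_0(x)+\tfrac{\epsilon}{1-\epsilon}c_2B_h(x),\\
\wt g(x) &= c_1a(c_1x),\qquad g(x)=c_1a(c_1x)+c_2B_h(x)-c_3\wt h^{\beta_1}b\left(\tfrac{x}{\wt h}\right),
\end{align*}
and couple the two bandwidths through $c_2h^{\beta_0}=c_3\wt h^{\beta_1}$. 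This coupling makes $g(0)=\wt g(0)=0$, using $a(0)=0$, $l(0)=b(0)=e^{-1}$, and $l(\pm 2c_4/h)=0$ once $h$ is small; hence the contamination-level constraint $g(0)\vee\wt g(0)\le m$ holds for every $m\ge 0$. One then takes $\wt h\asymp(n\epsilon^2)^{-1/(2\beta_1+1)}$; since $\beta_1\le\beta_0$ and $n\epsilon^2\ge 1$, the constants can be chosen so that $\wt h\le h\le 1$. It is convenient first to dispose of the regime $n\epsilon^2\le C_0$ for a large constant $C_0$: there $n^{-\frac{2\beta_1}{2\beta_1+1}}\epsilon^{\frac{2}{2\beta_1+1}}=n^{-1}(n\epsilon^2)^{\frac{1}{2\beta_1+1}}\lesssim n^{-1}\le n^{-\frac{2\beta_0}{2\beta_0+1}}$, so the claim follows from Lemma~\ref{lem:term1}; hence we may make $\wt h$ (and therefore $h$) as small as needed.

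The first block of work is to verify that, for suitably small $c_1,c_2,c_3>0$ and an appropriate fixed $c_4>0$, all four functions are genuine densities in the prescribed classes. The H\"older memberships $\wt f\in\mathcal P(\beta_0,L_0)$ and $g,\wt g\in\mathcal P(\beta_1,L_1)$ follow from the exact scale-invariance $h^{\beta}\phi(\cdot/h)\in\Sigma(\beta,C_\phi)$ of H\"older balls (uniform in $h\le 1$ for a fixed mollifier $\phi$) together with properties~(2) of Lemmas~\ref{lem:a} and~\ref{lem:b}; the mass constraints $\int\wt f=\int g=1$ hold because $\int B_h=0$ (the two half-width bumps at $\pm c_4$ cancel the mass of the central bump) and $\int b=0$. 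Nonnegativity of $g$ is the delicate point: away from $0$ and $\pm c_4$ the perturbation is $O(h^{\beta_0})\to 0$ while $f_0$, respectively $c_1a(c_1\cdot)$, is bounded below; near $\pm c_4$ one fixes $c_4$ so that $c_1c_4$ lies in the region where $a$ is bounded below by Lemma~\ref{lem:a}(4); and near the origin, on $|x|\le\wt h/2$ the perturbation equals $c_2h^{\beta_0}\big(l(x/h)-l(2x/\wt h)\big)\ge 0$ (after the cancellation, using that $\wt h\le h$ forces $|2x/\wt h|\ge|x/h|$ and that $l$ is radially decreasing), while on $\wt h/2\le|x|\le\wt h$ the term $-c_3\wt h^{\beta_1}b(x/\wt h)$ has a favorable sign. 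I expect this nonnegativity check, which must hold simultaneously with the cancellation constraint $c_2h^{\beta_0}=c_3\wt h^{\beta_1}$, to be the main obstacle.

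Granting the construction, the rest consists of short computations. A direct cancellation gives
$$p(\epsilon,f,g)-p(\epsilon,\wt f,\wt g)=-c_3\epsilon\wt h^{\beta_1}b\left(\tfrac{x}{\wt h}\right),$$
supported on $[-\wt h,\wt h]\subseteq[-1,1]$, and $p(\epsilon,f,g)\ge(1-\epsilon)f_0\gtrsim 1$ on $[-1,1]$ since $\epsilon\le 1/2$ and $g\ge 0$. Exactly as in the chi-squared estimate of Lemma~\ref{lem:term1}, this yields
$$\chi^2\big(p(\epsilon,\wt f,\wt g),\,p(\epsilon,f,g)\big)\;\lesssim\;\int\big(p(\epsilon,f,g)-p(\epsilon,\wt f,\wt g)\big)^2\;\asymp\;\epsilon^2\wt h^{2\beta_1}\int b^2\left(\tfrac{x}{\wt h}\right)\;\asymp\;\epsilon^2\wt h^{2\beta_1+1},$$
so the choice $\wt h\asymp(n\epsilon^2)^{-1/(2\beta_1+1)}$ makes this $\lesssim n^{-1}$; equivalently, the $\chi^2$ of the $n$-fold products stays bounded. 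Finally the separation is
$$|f(0)-\wt f(0)|=\tfrac{\epsilon}{1-\epsilon}c_2h^{\beta_0}l(0)\;\asymp\;\epsilon h^{\beta_0}\;\asymp\;\epsilon\wt h^{\beta_1}\;\asymp\;\epsilon\,(n\epsilon^2)^{-\frac{\beta_1}{2\beta_1+1}},$$
so that $|f(0)-\wt f(0)|^2\asymp n^{-\frac{2\beta_1}{2\beta_1+1}}\epsilon^{\frac{2}{2\beta_1+1}}$, and Lemma~\ref{lem:lowerbound} delivers $\mathcal R(\epsilon,\beta_0,\beta_1,L_0,L_1,m)\gtrsim n^{-\frac{2\beta_1}{2\beta_1+1}}\epsilon^{\frac{2}{2\beta_1+1}}$. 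Combined with Lemmas~\ref{lem:term1} and~\ref{lem:term2}, this completes the proof of Theorem~\ref{thm:smoothlowerbound}.
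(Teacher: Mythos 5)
Your proposal follows essentially the same route as the paper: identical two-point construction with the two-bandwidth perturbation, the coupling $c_2h^{\beta_0}=c_3\wt h^{\beta_1}$, the cancellation $p-q=-c_3\epsilon\wt h^{\beta_1}b(x/\wt h)$, the $\chi^2$ bound, and Lemma~\ref{lem:lowerbound}. The only differences are that you make the nonnegativity check near the origin more explicit (via $l$ being radially decreasing and $\wt h\leq h$) and you separately dispose of the bounded-$n\epsilon^2$ regime via Lemma~\ref{lem:term1}, both of which are helpful clarifications of points the paper treats somewhat tersely, but the argument is the same.
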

\begin{proof}
Consider the following four functions,
\begin{align*}
f(x) &=f_0(x),\\
\wt f(x) &=f_0(x)+c_{2}\frac{\epsilon}{1-\epsilon}\left[h^{\beta_0}l\left(\frac{x}{h}\right)-h^{\beta_0}l\left(\frac{2(x-c_{4})}{h}\right)-h^{\beta_0}l\left(\frac{2(x+c_{4})}{h}\right)\right],\\
g(x)&=c_{1}a(c_{1}x)+c_{2}\left[h^{\beta_0}l\left(\frac{x}{h}\right)-h^{\beta_0}l\left(\frac{2(x-c_{4})}{h}\right)-h^{\beta_0}l\left(\frac{2(x+c_{4})}{h}\right)\right]-c_{3}\wt h^{\beta_1}b\left(\frac{x}{\wt h}\right),\\
\wt g(x)&=c_{1}a(c_{1}x).
\end{align*}
Since the proof relies on perturbing a density at a point where it is $0$, the verification of nonnegativity is more delicate, which motivates another tuning constant controlling the center of the negative part of the perturbation.
Here, we take $f_0$ as the density function of some normal distribution with mean zero so that $f_0\in\mathcal{P}(\beta_0,L_0/2)$. The functions $a(x)$ and $b(x)$ are given by Lemma \ref{lem:a} and Lemma \ref{lem:b}. The numbers $h$ and $\wt{h}$ are chosen so that the following equation is satisfied:
\begin{equation}\label{eq:relation}
c_{2}h^{\beta_0}l(0)=c_{3}\wt h^{\beta_1}b(0).
\end{equation}
Now, we verify that with appropriate choices of constants $c_1,c_2,c_3,c_4$, the constructed functions belong to the parameter spaces.
\begin{itemize}
  \item The functions $f$ and $\wt g$ are automatically density functions by definition. Note that we can choose a small  constant $c_4$ so that the negative perturbation $-h^{\beta_0}l\left(\frac{2(x-c_{4})}{h}\right)-h^{\beta_0}l\left(\frac{2(x+c_{4})}{h}\right)$ has a support in a region where both $f_0$ and $c_1a(c_1x)$ are bounded below by a positive constant. This immediately implies that $\wt{f}(x)\geq 0$ for all $x$ with a sufficiently small constant $c_2$. Similarly, the support of $-c_{3}\wt h^{\beta_1}b\left(\frac{x}{\wt h}\right)$ is $[-\wt{h},\wt{h}]$, which is contained in a region where $c_1a(c_1x)$ is bounded below by a positive constant for a sufficiently small $\wt{h}$. Therefore, $g(x)\geq 0$ for all $x$ with a sufficiently small constant $c_3$. We also note that $\int \wt{f}=\int g=1$ according to the definitions.
  \item When $c_{1},c_{2},c_{3}$ are chosen small enough, we have $f,\wt f\in \Sigma(\beta_0,L_0)$ and $g,\wt g \in \Sigma(\beta_1,L_1)$. Here $g\in \Sigma(\beta_1,L_1)$ is a consequence of the assumption that $\beta_1\leq\beta_0$.
  \item Finally, we have $l(2c_4/h)=l(-2c_4/h)=0$ for a sufficiently small $h$. This implies $g(0)=\wt g(0)=0$ because of (\ref{eq:relation}). Therefore, $|g(0)\vee \wt g(0)|\leq m$.
\end{itemize}
In summary, we have
$$(1-\epsilon)f+\epsilon g, (1-\epsilon)\wt{f}+\epsilon\wt{g}\in \mathcal{M}(\epsilon,\beta_0,\beta_1,L_0,L_1,m).$$
Besides the properties listed above, we also note that both $f$ and $g$ can be bounded from below by some positive constant on the interval $[-1,1]$, if the constants $c_2,c_3$ are sufficiently small. This implies that the density $(1-\epsilon)f+\epsilon g$ is lower bounded by some positive constant on the interval $[-1,1]$.

Now, according to the above construction, for $p=(1-\epsilon)f+\epsilon g$ and $q=(1-\epsilon)\wt{f}+\epsilon\wt{g}$, we have
\begin{align*}
p(x)-q(x)=-\epsilon c_{3}\wt h^{\beta_1}b\left(\frac{x}{\wt h}\right).
\end{align*}
Given that the support of $b\left(\frac{x}{\wt h}\right)$ is within $[-\wt{h},\wt{h}]\subset[-1,1]$ with a sufficiently small $\wt{h}$, we have
$$\chi^2(q,p)=\int_{-1}^1\frac{(p-q)^2}{p}\lesssim \int (p-q)^2 \asymp \epsilon^2 \wt{h}^{2\beta_1}\int b^2\left(\frac{x}{\wt h}\right)\asymp \epsilon^2\wt{h}^{2\beta_1+1}.$$
In order that $n\chi^2(q,p)\lesssim 1$, it is sufficient to choose $\wt{h}\asymp\left(n\epsilon^2\right)^{-\frac{1}{2\beta_1+1}}$. The condition $n\epsilon^2\geq 1$ implies that $\wt{h}$ can be picked sufficiently small.
Moreover, with the relation (\ref{eq:relation}), we have
$$|f(0)-\wt{f}(0)|=c_2\frac{\epsilon}{1-\epsilon}h^{\beta_0}l(0)\asymp \epsilon h^{\beta_0}\asymp \epsilon \wt{h}^{\beta_1}\asymp\epsilon^{\frac{1}{2\beta_1+1}}n^{-\frac{\beta_1}{2\beta_1+1}}.$$
Finally, using Lemma \ref{lem:lowerbound}, we obtain the desired lower bound result.
\end{proof}

We combine the results of Lemma \ref{lem:term1}, Lemma \ref{lem:term2} and Lemma \ref{lem:term3}.
\begin{proof}[Proof of Theorem \ref{thm:smoothlowerbound}]
In order that the third term $n^{-\frac{2\beta_1}{2\beta_1+1}}\epsilon^{\frac{2}{2\beta_1+1}}$ dominates the other two, it is necessary that $\epsilon^2\geq n^{\frac{2\beta_1-2\beta_0}{2\beta_0+1}}$. This implies both $\beta_1\leq \beta_0$ and $n\epsilon^2\geq 1$. By Lemma \ref{lem:term3}, we have
$$\mathcal R(\epsilon,\beta_0,\beta_1,L_0,L_1,m)\gtrsim n^{-\frac{2\beta_1}{2\beta_1+1}}\epsilon^{\frac{2}{2\beta_1+1}}.$$
When the first or the second term dominate, we use Lemma \ref{lem:term1} and Lemma \ref{lem:term2}, and obtain
$$\mathcal R(\epsilon,\beta_0,\beta_1,L_0,L_1,m)\gtrsim [n^{-\frac{2\beta_0}{2\beta_0+1}}]\vee[\epsilon^2(1\wedge m)^2].$$
Hence, the proof is complete.
\end{proof}

\subsection{Proofs of Theorem \ref{thm:smadapt1} and Theorem \ref{thm:smadapt2}}\label{sec:pf-cri}

The proofs of both theorems rely on the following constrained risk inequality by \cite{brown1996constrained}.
\begin{lemma}\label{lem:crineq}
Consider two distributions $P_{\theta_0}$ and $P_{\theta_1}$ whose parameters of interest are separated by $\Delta=|T_{\theta_0}-T_{\theta_1}|$. For any estimator $\wh{T}$, assume
$$
\mathbb{E}_{\theta_0}(\wh T-T_{\theta_0})^2\leq \delta^2.
$$
Then, whenver $\delta I\leq \Delta$, we have
$$
\mathbb{E}_{\theta_1}(\wh T-T_{\theta_1})^2\geq (\Delta-\delta I)^2,
$$
where $I=\sqrt{\int \frac{dP_{\theta_1}^2}{dP_{\theta_0}}}$.
\end{lemma}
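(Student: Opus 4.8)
The plan is to run a short change-of-measure argument combined with Cauchy--Schwarz and Jensen's inequality, following \cite{brown1996constrained}. Write $L=dP_{\theta_1}/dP_{\theta_0}$ for the likelihood ratio. If $P_{\theta_1}$ is not absolutely continuous with respect to $P_{\theta_0}$, then $I=\infty$ and the asserted bound is vacuous, so we may assume $L$ exists; note that then $I^2=\int\frac{dP_{\theta_1}^2}{dP_{\theta_0}}=\mathbb{E}_{\theta_0}[L^2]$. The key identity is $\mathbb{E}_{\theta_1}[\phi(X)]=\mathbb{E}_{\theta_0}[\phi(X)L]$ for any integrable $\phi$, which will be applied to $\phi=\wh T-T_{\theta_0}$.

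First I would bound the ``$P_{\theta_1}$-bias'' of $\wh T-T_{\theta_0}$. By the change of measure and Cauchy--Schwarz,
$$\big|\mathbb{E}_{\theta_1}(\wh T-T_{\theta_0})\big|=\big|\mathbb{E}_{\theta_0}[(\wh T-T_{\theta_0})L]\big|\leq\sqrt{\mathbb{E}_{\theta_0}(\wh T-T_{\theta_0})^2}\cdot\sqrt{\mathbb{E}_{\theta_0}[L^2]}\leq\delta I,$$
where the last step uses the hypothesis $\mathbb{E}_{\theta_0}(\wh T-T_{\theta_0})^2\leq\delta^2$. Next, since $\wh T-T_{\theta_1}=(\wh T-T_{\theta_0})-(T_{\theta_1}-T_{\theta_0})$ and $|T_{\theta_1}-T_{\theta_0}|=\Delta$, the triangle inequality gives
$$\big|\mathbb{E}_{\theta_1}(\wh T-T_{\theta_1})\big|\geq\Delta-\big|\mathbb{E}_{\theta_1}(\wh T-T_{\theta_0})\big|\geq\Delta-\delta I.$$
This is the only place the hypothesis $\delta I\leq\Delta$ enters: it ensures $\Delta-\delta I\geq0$, so that squaring preserves the inequality. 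Finally, Jensen's inequality yields $\mathbb{E}_{\theta_1}(\wh T-T_{\theta_1})^2\geq\big(\mathbb{E}_{\theta_1}(\wh T-T_{\theta_1})\big)^2\geq(\Delta-\delta I)^2$, which is the claim.

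There is no serious obstacle here; the argument is elementary and self-contained. The only points requiring care are the absolute-continuity caveat (dispatched by declaring the bound vacuous when $I=\infty$) and the observation that the sign condition $\delta I\leq\Delta$ is precisely what licenses passing from the bound on $|\mathbb{E}_{\theta_1}(\wh T-T_{\theta_1})|$ to the bound on its square. One may view the statement as a continuity property of the risk along the two-point family $\{\theta_0,\theta_1\}$, but the direct computation above is the cleanest route.
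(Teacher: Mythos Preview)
Your argument is correct and is precisely the standard Brown--Low constrained risk inequality proof: change of measure, Cauchy--Schwarz to bound $|\mathbb{E}_{\theta_1}(\wh T-T_{\theta_0})|$ by $\delta I$, the triangle inequality to get $|\mathbb{E}_{\theta_1}(\wh T-T_{\theta_1})|\geq \Delta-\delta I$, and Jensen to pass to the squared risk. The paper itself does not supply a proof of this lemma at all---it simply states it and attributes it to \cite{brown1996constrained}---so there is nothing to compare beyond noting that your write-up reproduces the original argument cleanly, including the absolute-continuity caveat and the role of the sign condition $\delta I\leq\Delta$.
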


\begin{proof}[Proof of Theorem \ref{thm:smadapt1}]
We consider the following four functions,
\begin{align*}
f&=f_0,\\
g&=c_1a(c_1x),\\
\wt f&= \frac{1-\epsilon}{1-\wt\epsilon}f_0+\frac{\epsilon-\wt\epsilon}{1-\wt\epsilon}c_1a(c_1x),\\
\wt g&=c_1a(c_1x).
\end{align*}
Here, we take $f_0$ as the density function of some normal distribution with mean zero so that $f_0\in\mathcal{P}(\beta_0,L_0/2)$. The function $a(\cdot)$ is given by Lemma \ref{lem:a}. The constant $c_1$ is sufficiently small so that $c_1a(c_1x)$ belongs to both $\mathcal{P}(\beta_0,L_0/2)$ and $\mathcal{P}(\beta_1,L_1/2)$. Now it is easy to check that $f,\wt f\in \mathcal{P}(\beta_0,L_0)$, $g,\wt g\in \mathcal{P}(\beta_1,L_1)$ and $g(0)\vee\wt g(0)=0\leq m$, so that the constructed functions are well-defined densities in the parameter spaces.

It is easy to check that
$$(1-\epsilon)f+\epsilon g=(1-\wt\epsilon)\wt{f}+\wt\epsilon \wt{g}.$$
This implies $\int q^2/p=1$ for $p=(1-\epsilon)f+\epsilon g$ and $q=(1-\wt\epsilon)\wt{f}+\wt\epsilon \wt{g}$. We also have
$$\left|f(0)-\wt{f}(0)\right|=\frac{\epsilon-\wt\epsilon}{1-\wt\epsilon}f(0).$$
According to Lemma \ref{lem:crineq}, suppose there is an estimator $\wh{f}(0)$ that satisfies $\mathbb{E}_{p^n}\left(\wh f(0)-f(0)\right)^2\leq C\wt\epsilon^2$, we must have
$$\mathbb{E}_{q^n}(\wh{f}(0)-\wt{f}(0))^2\geq \left(\frac{\epsilon-\wt\epsilon}{1-\wt\epsilon}f(0)-C^{1/2}\wt\epsilon\right)^2.$$
Therefore, there exists a constant $C'>0$, such that for $\epsilon\geq C'\wt\epsilon$, $\mathbb{E}_{q^n}(\wh{f}(0)-\wt{f}(0))^2\gtrsim {\epsilon}^2$, and the proof is complete.
\end{proof}

\begin{proof}[Proof of Theorem \ref{thm:smadapt2}]
We construct the following four functions
\begin{align*}
\wt f(x) &=f_0(x),\\
f(x) &=f_0(x)-c_{2}\frac{\epsilon}{1-\epsilon}\left[h^{\beta_0}l\left(\frac{x}{h}\right)-h^{\beta_0}l\left(\frac{2(x-c_{4})}{h}\right)-h^{\beta_0}l\left(\frac{2(x+c_{4})}{h}\right)\right],\\
\wt g(x)&=c_{1}a(c_{1}x),\\
{g}(x)&=c_{1}a(c_{1}x)+c_{2}\left[h^{\beta_0}l\left(\frac{x}{h}\right)-h^{\beta_0}l\left(\frac{2(x-c_{4})}{h}\right)-h^{\beta_0}l\left(\frac{2(x+c_{4})}{h}\right)\right]-c_{3}\wt h^{\beta_1}b\left(\frac{x}{\wt h}\right).
\end{align*}
The construction is similar to that in the proof of Lemma \ref{lem:term3}. The difference is that the perturbation is now put on both ${f}$ and ${g}$. Here, we take $f_0$ as the density function of some normal distribution with mean zero so that $f_0\in\mathcal{P}(\beta_0,L_0/2)$. The functions $a(x)$ and $b(x)$ are given by Lemma \ref{lem:a} and Lemma \ref{lem:b}. The numbers $h$ and $\wt{h}$ are chosen so that the following equation is satisfied:
\begin{equation}\label{eq:relation2}
c_{2}h^{{\beta}_0}l(0)=c_{3}\wt h^{{\beta}_1}b(0).
\end{equation}
Similar to the argument used in Lemma \ref{lem:term3}, it is not hard to check that with appropriate choices of the constants $c_1,c_2,c_3$, we have $\wt f\in\mathcal{P}(\wt\beta_0,\wt L_0)$, $\wt g\in\mathcal{P}(\wt\beta_1,\wt L_1)$, ${f}\in\mathcal{P}({\beta}_0,{L}_0)$ and ${g}\in\mathcal{P}({\beta}_1,{L}_1)$, given that $\wt\beta_0\geq {\beta}_0\geq{\beta}_1$ and $\wt{\beta}_1>\beta_1$. The numbers $h$ and $\wt{h}$ are both required to be sufficiently small. We also have $g(0)=\wt{g}(0)=0$ according to the definition with an appropriate choice of $c_4$. Then, the constructed functions are well-defined densities in the parameter spaces.

With the notation $p=(1-\epsilon)f+\epsilon g$ and $q=(1-\epsilon)\wt{f}+\epsilon\wt{g}$, we check the quantities in Lemma \ref{lem:crineq}. Note that
$$|p(x)-q(x)|=c_{3}\epsilon\wt h^{\beta_1}b\left(\frac{x}{\wt h}\right).$$
With a similar argument in the proof of Lemma \ref{lem:term3}, the function $b\left(\frac{x}{\wt h}\right)$ is supported within $[-\wt{h},\wt{h}]\subset[-1,1]$, and $p(x)$ is lower bounded by some constant uniformly over $x\in[-1,1]$. This implies,
$$I=\left(\int\frac{q^2}{p}\right)^{\frac{n}{2}}=\left(1+\int \frac{(q-p)^2}{p}\right)^{\frac{n}{2}}\leq \exp\left(\frac{C_1n}{2}\int (p-q)^2\right)\leq \exp\left(C_1'n\epsilon^2\wt{h}^{2{\beta}_1+1}\right).$$
Moreover, we also have
$$\Delta=|f(0)-\wt{f}(0)|=c_2\frac{\epsilon}{1-\epsilon}h^{{\beta}_0}l(0),$$
and
$$\delta=C^{1/2}\left(\frac{n}{\log n}\right)^{-\frac{\wt\beta_1}{2\wt\beta_1+1}}\epsilon^{\frac{1}{2\wt\beta_1+1}}.$$
In order that $I\leq \left(\frac{n\epsilon^2}{\log n}\right)^c$ for some sufficiently small constant $c>0$, we can choose $\wt{h}\asymp \left(\frac{n\epsilon^2}{\log n}\right)^{-\frac{1}{2{\beta}_1+1}}$, which is always possible with the condition $n\epsilon^2\geq (\log n)^2$. According to the relation (\ref{eq:relation2}), we have $\Delta\asymp \epsilon^{\frac{1}{2{\beta}_1+1}}\left(\frac{n}{\log n}\right)^{-\frac{{\beta}_1}{2{\beta}_1+1}}$.
Plugging these quantities into the constrained risk inequality in Lemma \ref{lem:crineq} and using $\beta_1<\wt{\beta}_1$, we get the desired lower bound.
\end{proof}

\subsection{Proofs of Theorem \ref{thm:lepski2} and Theorem \ref{thm:lepski}}

The proofs of the two theorems are similar. Thus, we give a detailed proof of Theorem \ref{thm:lepski} first, and then sketch the proof of Theorem \ref{thm:lepski2}.

\begin{proof}[Proof of Theorem \ref{thm:lepski}]
For every bandwidth $h$, the error decomposes as
\begin{equation}\label{eq:riskdecomposition}
\wh f_h(0)-f(0)=(\wh f_h(0)-\mathbb{E}\wh f_h(0))+(\mathbb{E}\wh f_h(0)-(1-\epsilon)f(0)-\epsilon g(0))+\epsilon(g(0)-f(0)),
\end{equation}
where the three terms correspond to a stochastic part that depends on $h$, a deterministic part that depends on $h$, and a deterministic part that does not depend on $h$. With the same argument in the proof of Theorem \ref{thm:fixedbandwidth}, we have
$$\mathbb{E}(\wh f(0)-\mathbb{E}\wh f(0))^2\lesssim \frac{1}{nh},$$
$$|\mathbb{E}\wh f(0)-(1-\epsilon)f(0)-\epsilon g(0)|\lesssim h^{\beta_0} + \epsilon h^{\beta_1},$$
and
$$\epsilon|g(0)-f(0)|\lesssim\epsilon.$$
Define the oracle bandwidth $h_*$ to be the largest $h\in \mathcal{H}$ such that
\begin{align*}
h^{\beta_0} + \epsilon h^{\beta_1}\leq c\sqrt{\frac{\log n}{nh}},
\end{align*}
where the constant $c>0$ will be determined later.
Then it is easy to see that $h_*$ satisfies
\begin{equation}\label{eq:optimaltradeoff}
 c'\sqrt{\frac{\log n}{nh_*}}\leq h_*^{\beta_0}+\epsilon h_*^{\beta_1} \leq c\sqrt{\frac{\log n}{nh_*}},
\end{equation}
for some constant $c'$ that only depends on $c$.

We proceed to prove that $\wh h\geq h_*$ with high probability. By the definition of $\wh{h}$, we have
\begin{align*}
\mathbb{P}(\wh h< h_*)&\leq \mathbb{P}\left(\exists l\leq h_*\text{ and }l\in\mathcal{H} \mbox{ s.t. } |\wh f_{h_*}(0)-\wh f_l(0)|> c_1\sqrt{\frac{\log n}{nl}}\right)\\
&\leq \sum_{l\leq h_*,l\in\mathcal{H}}\mathbb{P}\left(|\wh f_{h_*}(0)-\wh f_l(0)|> c_1\sqrt{\frac{\log n}{nl}}\right).
\end{align*}
We derive a bound for $\mathbb{P}\left(|\wh f_{h_*}(0)-\wh f_l(0)|> c_1\sqrt{\frac{\log n}{nl}}\right)$ for each $l\leq h_*$ and $l\in\mathcal{H}$.
Due to the error decomposition (\ref{eq:riskdecomposition}), we have:
\begin{align*}
|\wh f_{h_*}(0)-\wh f_l(0)|\leq C(h_*^{\beta_0}+\epsilon h_*^{\beta_1})+|\wh f_{h_*}(0)-\mathbb{E}\wh f_{h_*}(0)|+|\wh f_l(0)-\mathbb{E}\wh f_l(0)|,
\end{align*}
for some constant $C>0$.
By (\ref{eq:optimaltradeoff}),
the bias term can be controlled as
$$C(h_*^{\beta_0}+\epsilon h_*^{\beta_1})\leq  C\times c\sqrt{\frac{\log n}{nh_*}}\leq \frac{c_1}{2}\sqrt{\frac{\log n}{nl}},$$
for a sufficiently small $c>0$.
Thus, we have
\begin{align*}
\mathbb{P}(\wh h< h_*)&\leq \sum_{l\leq h_*,l\in\mathcal{H}}\mathbb{P}\left(|\wh f_{h_*}(0)-\mathbb{E}\wh f_{h_*}(0)|+|\wh f_l(0)-\mathbb{E}\wh f_l(0)|\geq \frac{c_1}{2}\sqrt{\frac{\log n}{nl}}\right) \\
& \leq \sum_{l\leq h_*,l\in\mathcal{H}}\mathbb{P}\left(|\wh f_{h_*}(0)-\mathbb{E}\wh f_{h_*}(0)|\geq \frac{c_1}{4}\sqrt{\frac{\log n}{nh^*}}\right) \\
& + \sum_{l\leq h_*,l\in\mathcal{H}}\mathbb{P}\left(|\wh f_{l}(0)-\mathbb{E}\wh f_{l}(0)|\geq \frac{c_1}{4}\sqrt{\frac{\log n}{nl}}\right).
\end{align*}
For any $l\leq h_*$ and $l\in\mathcal{H}$, we use Bernstein's inequality, and get
\begin{eqnarray*}
&& \mathbb{P}\left(|\wh f_{l}(0)-\mathbb{E}\wh f_{l}(0)|\geq t\right) \\
&\leq& \mathbb{P}\left(\left|\frac{1}{n}\sum_{i=1}^nl^{-1}K(X_i/l)-\mathbb{E}l^{-1}K(X/l)\right|\geq t\right) \\
&\leq& 2\exp\left(-\frac{nt^2/2}{\sigma^2+Mt/3}\right),
\end{eqnarray*}
where we choose $t=\frac{c_1}{4}\sqrt{\frac{\log n}{nl}}$, and $\sigma^2$ and $M$ have bounds
$$\sigma^2\leq \mathbb{E}l^{-2}K^2(X/l)\lesssim l^{-1}\text{ and }M\lesssim l^{-1}.$$
This implies the bound
\begin{equation}
\mathbb{P}\left(|\wh f_{l}(0)-\mathbb{E}\wh f_{l}(0)|\geq \frac{c_1}{4}\sqrt{\frac{\log n}{nl}}\right)\leq 2\exp\left(-C'\log n\right),\label{eq:haoyang-nb}
\end{equation}
where the constant $C'>0$ can be arbitrarily large given a sufficiently large $c_1>0$.
For example, we set a large enough $c_1>0$ so that $C'=3$. This gives
$$\mathbb{P}(\wh h< h_*)\leq 4|\mathcal{H}|n^{-3}\lesssim n^{-3}\log n.$$

Now, on the event $\{\wh h\geq h_*\}$, the risk decomposes as
\begin{align*}
|\wh f_{\wh h}(0)-f(0)|\leq |\wh f_{\wh h}(0)-\wh f_{h_*}(0)|+|\wh f_{h_*}(0)-f(0)|.
\end{align*}
Due to the definition of $\wh h$, the first term satisfies
\begin{equation}\label{eq:oraclebandwidth1}
|\wh f_{\wh h}(0)-\wh f_{h_*}(0)|\leq c_1\sqrt{\frac{\log n}{nh_*}}.
\end{equation}
For the second term, the error decomposition and the relation (\ref{eq:optimaltradeoff}) implies
$$\mathbb{E}|\wh f_{h_*}(0)-f(0)|^2\lesssim \frac{\log n}{nh_*}+\epsilon^2.$$
Therefore, we have
\begin{eqnarray*}
&& \mathbb{E}(\wh f_{\wh h}(0)-f(0))^2 \\
&\leq& \mathbb{E}((\wh f_{\wh h}(0)-f(0))^2:\wh h\geq h_*)+\mathbb{E}((\wh f_{\wh h}(0)-f(0))^2:\wh h< h_*)\\
&\leq& 2\mathbb{E}((\wh f_{\wh h}(0)-\wh f_{h_*}(0))^2:\wh h\geq h_*)+2\mathbb{E}((\wh f_{h_*}(0)-f(0))^2:\wh h\geq h_*)+O\left(n^2\mathbb{P}(\wh h< h_*)\right)\\
&\lesssim& \frac{\log n}{nh_*} +\epsilon^2 + \frac{\log n}{n} \\
&\lesssim& \left(\frac{\log n}{n}\right)^{\frac{2\beta_0}{2\beta_0+1}}+\epsilon^2.
\end{eqnarray*}
The last inequality above is by realizing that $h_*\asymp \left(\frac{n}{\log n}\right)^{-\frac{1}{2\beta_0+1}}$ from the relation (\ref{eq:optimaltradeoff}). The proof is complete.
\end{proof}

\begin{proof}[Proof of Theorem \ref{thm:lepski2}]
The proof for Theorem \ref{thm:lepski2} follows the same argument as that of Theorem \ref{thm:lepski}. The only difference lies in the normalization, which leads to the error decomposition
$$\wh{f}_h(0)-f(0)=(\wh{f}_h(0)-\mathbb{E}\wh{f}_h(0))+\left(\mathbb{E}\wh{f}_h(0)-f(0)-\frac{\epsilon}{1-\epsilon}g(0)\right)+\frac{\epsilon}{1-\epsilon}g(0).$$
The rest of the details are the same and is omitted.
\end{proof}


\subsection{Proof of Theorem \ref{thm:minimax-arb}}\label{sec:pf-miss}

We split the proof into upper and lower bounds. We first prove the following upper bound.
\begin{thm}
For the estimator $\wh{f}(0)=\wh{f}_h(0)$ with some $K\in \mathcal{K}_{\left \lfloor{\beta_0}\right \rfloor}(L)$ and $h=n^{-\frac{1}{2\beta_0+1}}\vee \epsilon^{\frac{1}{\beta_0+1}}$, we have
$$\sup_{p(\epsilon,f,g)\in \mathcal M(\epsilon,\beta_0,L_0)}\mathbb{E}_{p^n}\left(\wh f(0)-f(0)\right)^2\lesssim [n^{-\frac{2\beta_0}{2\beta_0+1}}]\vee[\epsilon^{\frac{2\beta_0}{\beta_0+1}}].$$
\end{thm}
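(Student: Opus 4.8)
The plan is to carry out in detail the bias--variance analysis already sketched around (\ref{eq:RD-for-arb}). For the $\frac1n$-normalized kernel estimator $\wh f_h(0)=\frac1n\sum_{i=1}^n\frac1hK(X_i/h)$ with $X_1,\dots,X_n\sim p=(1-\epsilon)P_f+\epsilon G$, I write $K_h(x)=\frac1hK(x/h)$ and decompose
$$\wh f_h(0)-f(0)=\Big(\wh f_h(0)-\mathbb{E}_{p^n}\wh f_h(0)\Big)+\Big((1-\epsilon)\int K_h(x)f(x)\,dx+\epsilon\int K_h\,dG-f(0)\Big),$$
so that everything reduces to bounding the variance of the first term and the absolute value of the second (the bias). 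The one feature that differs from the structured case (Theorem \ref{thm:fixedbandwidth}) is that $G$ need not be absolutely continuous, so each occurrence of the contamination must be controlled through $\|K\|_\infty$ rather than through a sup-norm bound on a density.

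For the variance, $\Var(\wh f_h(0))=\frac1n\Var(K_h(X))\le\frac1n\mathbb{E}K_h(X)^2=\frac1n\big((1-\epsilon)\int K_h^2 f+\epsilon\int K_h^2\,dG\big)$; the first piece is $\lesssim\frac1h$ by the change of variables $u=x/h$ together with $\|f\|_\infty\lesssim1$ for $f\in\mathcal P(\beta_0,L_0)$ and $\int K^2\le L$, while the second is $\le\epsilon\|K_h\|_\infty^2\lesssim\frac{\epsilon}{h^2}$. Since the chosen $h$ obeys $\epsilon/h\le\epsilon^{\beta_0/(\beta_0+1)}\le1$, the term $\frac{\epsilon}{nh^2}=\frac{\epsilon}{h}\cdot\frac1{nh}$ is absorbed, giving $\mathbb{E}(\wh f_h(0)-\mathbb{E}\wh f_h(0))^2\lesssim\frac1{nh}$. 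For the bias, the standard order-$\lfloor\beta_0\rfloor$ kernel estimate (see \cite[Chapter 1.2]{tsybakov09}) gives $|\int K_h f-f(0)|\lesssim h^{\beta_0}$; rewriting $(1-\epsilon)\int K_h f-f(0)$ as $(\int K_h f-f(0))-\epsilon\int K_h f$ adds an $O(\epsilon)$ term (since $|\int K_h f|\le\|f\|_\infty\int|K|\lesssim1$), and $|\epsilon\int K_h\,dG|\le\epsilon\|K\|_\infty/h$. Using $h\le1$ so that $\epsilon\le\epsilon/h$, the bias is $\lesssim h^{\beta_0}+\epsilon/h$. Combining the two bounds yields $\mathbb{E}(\wh f_h(0)-f(0))^2\lesssim\frac1{nh}\vee h^{2\beta_0}\vee\frac{\epsilon^2}{h^2}$, which is exactly (\ref{eq:RD-for-arb}), with all implied constants depending only on $\beta_0,L_0,L$.

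Finally I plug in $h=n^{-\frac1{2\beta_0+1}}\vee\epsilon^{\frac1{\beta_0+1}}$ and check the three terms one by one. From $h\ge n^{-\frac1{2\beta_0+1}}$ we get $\frac1{nh}\le n^{-\frac{2\beta_0}{2\beta_0+1}}$; from $h\ge\epsilon^{\frac1{\beta_0+1}}$ we get $\frac{\epsilon^2}{h^2}\le\epsilon^{2-\frac2{\beta_0+1}}=\epsilon^{\frac{2\beta_0}{\beta_0+1}}$; and $h^{2\beta_0}=\max\big(n^{-\frac1{2\beta_0+1}},\epsilon^{\frac1{\beta_0+1}}\big)^{2\beta_0}=n^{-\frac{2\beta_0}{2\beta_0+1}}\vee\epsilon^{\frac{2\beta_0}{\beta_0+1}}$. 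Taking the maximum and noting that the bounds hold uniformly over $f\in\mathcal P(\beta_0,L_0)$ and over arbitrary $G$ gives the asserted bound. There is no genuine obstacle: this is the arbitrary-contamination analogue of Theorem \ref{thm:fixedbandwidth}, and the only point requiring care is that the contamination enters the bias only through $\|K\|_\infty/h$, producing the $\epsilon^2/h^2$ term that is absent in the structured case and forcing the optimal bandwidth to be \emph{larger} than $n^{-1/(2\beta_0+1)}$.
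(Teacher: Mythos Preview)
Your proof is correct and follows essentially the same approach as the paper: the same stochastic/bias decomposition, the same variance bound $\frac{1}{nh}+\frac{\epsilon}{nh^2}$ using $\|K\|_\infty$ on the contamination piece, the same bias bound $h^{\beta_0}+\epsilon/h$, and the same substitution of $h=n^{-\frac{1}{2\beta_0+1}}\vee\epsilon^{\frac{1}{\beta_0+1}}$. The only cosmetic difference is that the paper writes the bias as $(1-\epsilon)\!\int K_h(f-f(0))+\epsilon\!\int K_h\,dG-\epsilon f(0)$, bounding $\epsilon f(0)\lesssim\epsilon$ directly, whereas you regroup it as $(\int K_h f-f(0))-\epsilon\!\int K_h f+\epsilon\!\int K_h\,dG$ and bound $|\epsilon\!\int K_h f|\le\epsilon\|f\|_\infty\!\int|K|$; both give the same $h^{\beta_0}+\epsilon/h$.
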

\begin{proof}
Decompose the error as
\begin{align*}
\wh{f}_h(0)-f(0)=(\wh{f}_h(0)-\mathbb{E}\wh{f}_h(0))+(\mathbb{E}_h\wh{f}(0)-f(0)),
\end{align*}
where the first term is the stochastic error and the second term is the bias.
For the first term, we have
\begin{align*}
\mathbb{E}(\wh{f}_h(0)-\mathbb{E}\wh{f}_h(0))^2=\frac{1}{n}\Var\left(\frac{1}{h}K\left(\frac{X}{h}\right)\right),
\end{align*}
and
\begin{eqnarray*}
\Var\left(\frac{1}{h}K\left(\frac{X}{h}\right)\right) &\leq& (1-\epsilon)\int\frac{1}{h^2}K^2\left(\frac{x}{h}\right)f(x)dx + \epsilon\int\frac{1}{h^2}K^2\left(\frac{x}{h}\right)dG(x) \\
&\lesssim& \frac{1}{h}\int \frac{1}{h}K^2\left(\frac{x}{h}\right)dx+\frac{\epsilon}{h^2}\int dG(x) \\
&\lesssim& \frac{1}{h} + \frac{\epsilon}{h^2}.
\end{eqnarray*}
Therefore, we have
\begin{equation}\label{eq:term1new}
\mathbb{E}(\wh{f}_h(0)-\mathbb{E}\wh{f}_h(0))^2\lesssim\frac{1}{nh}+\frac{\epsilon}{nh^2}.
\end{equation}
For the bias term, we have
\begin{align*}
\mathbb{E} \wh{f}_h(0)-f(0)=(1-\epsilon)\int \frac{1}{h}K\left(\frac{x}{h}\right)(f(x)-f(0))dx+\epsilon\int  \frac{1}{h}K\left(\frac{x}{h}\right)dG(x)-\epsilon f(0),
\end{align*}
where the first term has bound
$$\left|\int \frac{1}{h}K\left(\frac{x}{h}\right)(f(x)-f(0))dx\right|\lesssim h^{\beta_0},$$
by \cite[Chapter 1.2]{tsybakov09}, and the next two terms can be bounded as
$$\left|\epsilon\int  \frac{1}{h}K\left(\frac{x}{h}\right)dG(x)-\epsilon f(0)\right|\lesssim \frac{\epsilon}{h}\int dG(x) +\epsilon f(0)\lesssim \frac{\epsilon}{h}.$$
Therefore, we have
\begin{equation}\label{eq:term2new}
|\mathbb{E}\wh{f}_h(0)-f(0)|\lesssim h^{\beta_0}+\frac{\epsilon}{h}.
\end{equation}
Combine the two bounds (\ref{eq:term1new}) and (\ref{eq:term2new}), choose $h=n^{-\frac{1}{2\beta_0+1}}\vee \epsilon^{\frac{1}{\beta_0+1}}$, and then we complete the proof.
\end{proof}

Now we state the lower bound.
\begin{thm}\label{thm:arbitrarylowerbound}
We have
$$
{\mathcal R}(\epsilon,\beta_0,L_0)\gtrsim [n^{-\frac{2\beta_0}{2\beta_0+1}}]\vee[\epsilon^{\frac{2\beta_0}{\beta_0+1}}].
$$
\end{thm}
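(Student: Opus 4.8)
The plan is to establish the two terms of the bound separately and take their maximum, paralleling the structure of Theorem~\ref{thm:smoothlowerbound}.

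The first term $n^{-\frac{2\beta_0}{2\beta_0+1}}$ requires no new ideas. Since the contamination distribution is now entirely arbitrary, one may take $G=P_f$, so that $(1-\epsilon)P_f+\epsilon G=P_f$ for every $f\in\mathcal{P}(\beta_0,L_0)$. Hence $\mathcal{M}(\epsilon,\beta_0,L_0)$ contains the uncontaminated model $\{P_f:f\in\mathcal{P}(\beta_0,L_0)\}$ as a submodel, and $\mathcal{R}(\epsilon,\beta_0,L_0)$ is bounded below by the classical pointwise minimax risk, which is $\gtrsim n^{-\frac{2\beta_0}{2\beta_0+1}}$ by the two-point argument of Lemma~\ref{lem:term1} (taking $g=\wt g$, so that contamination is held fixed).

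For the second term $\epsilon^{\frac{2\beta_0}{\beta_0+1}}$ I would use a single two-point construction built on the bump $b$ of Lemma~\ref{lem:b}. Take $f=f_0$ and $\wt f(x)=f_0(x)+c_1h^{\beta_0}b(x/h)$, where $f_0$ is a fixed smooth density in $\mathcal{P}(\beta_0,L_0/2)$ bounded below on $[-1,1]$, $c_1$ is a small constant, and $h\le 1$; then $f,\wt f\in\mathcal{P}(\beta_0,L_0)$, $|f(0)-\wt f(0)|\asymp h^{\beta_0}$, and $\TV(P_f,P_{\wt f})=\tfrac12\int|f-\wt f|\asymp h^{\beta_0+1}$. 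The one genuinely Huber-specific step is to absorb this $O(h^{\beta_0+1})$ discrepancy entirely into the $\epsilon$-fraction of contamination. Writing $(\wt f-f)\,dx=\nu_+-\nu_-$ for the Jordan decomposition of the signed measure, with $\int\nu_+=\int\nu_-=\TV(P_f,P_{\wt f})=:\tau$, I would choose $h\asymp\epsilon^{1/(\beta_0+1)}$ with a sufficiently small constant so that $\tfrac{1-\epsilon}{\epsilon}\tau\le 1$ (possible since $\epsilon\le 1/2$ forces $\epsilon/(1-\epsilon)\asymp\epsilon$), pick any density $\wt g$ with $\wt g\ge\tfrac{1-\epsilon}{\epsilon}\nu_-$ pointwise, and set $g=\wt g+\tfrac{1-\epsilon}{\epsilon}(\nu_+-\nu_-)$. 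Checking nonnegativity on $\supp\nu_+$, on $\supp\nu_-$ and elsewhere, and using $\int\nu_+=\int\nu_-$, one gets that $g$ is again a density; and by construction $(1-\epsilon)f+\epsilon g=(1-\epsilon)\wt f+\epsilon\wt g$ as probability measures. Thus $p:=(1-\epsilon)f+\epsilon g$ and $q:=(1-\epsilon)\wt f+\epsilon\wt g$ both lie in $\mathcal{M}(\epsilon,\beta_0,L_0)$ and are literally identical, so $\chi^2(q,p)=0$ and Lemma~\ref{lem:lowerbound} yields $\mathcal{R}(\epsilon,\beta_0,L_0)\gtrsim|f(0)-\wt f(0)|^2\asymp h^{2\beta_0}\asymp\epsilon^{\frac{2\beta_0}{\beta_0+1}}$. (Equivalently, this is the estimate $\omega(\epsilon)\asymp\epsilon^{\frac{2\beta_0}{\beta_0+1}}$ for the modulus of continuity plugged into Theorem~5.1 of \cite{chen2015robust}, but the construction above is self-contained.) Combining the two bounds gives the claim.

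The only delicate point is the matching-contamination construction in the second step: the verification that $g$ is a bona fide density hinges exactly on the inequality $\TV(P_f,P_{\wt f})\le\epsilon/(1-\epsilon)$, and it is this constraint that pins down $h\asymp\epsilon^{1/(\beta_0+1)}$ and hence the exponent $\tfrac{2\beta_0}{\beta_0+1}$. Everything else — the bump calculus for $\wt f$, nonnegativity, the TV computation, and the Le Cam reduction — is routine and mirrors the proofs of Lemmas~\ref{lem:term1}--\ref{lem:term3}. I would also note in passing that, unlike in the structured case, no $\chi^2$ bound of order $n^{-1}$ is needed here: the two hypotheses coincide exactly, so the lower bound $\epsilon^{\frac{2\beta_0}{\beta_0+1}}$ holds for every sample size $n$.
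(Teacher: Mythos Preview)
Your proposal is correct and follows essentially the same approach as the paper. The paper likewise handles the first term by referring to Lemma~\ref{lem:term1}, and for the second term uses the same bump construction $f=f_0$, $\wt f=f_0+ch^{\beta_0}b(x/h)$; your Jordan-decomposition argument for producing $g,\wt g$ with $(1-\epsilon)f+\epsilon g=(1-\epsilon)\wt f+\epsilon\wt g$ is exactly the content of the paper's auxiliary Lemma~\ref{lem:dif} (a function is a difference of two densities iff it integrates to zero and has $L^1$ norm at most $2$), applied to $\frac{1-\epsilon}{\epsilon}(\wt f-f)$, and the resulting constraint $\int|\wt f-f|\lesssim\epsilon$ forces $h\asymp\epsilon^{1/(\beta_0+1)}$ in both arguments.
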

Before proving this theorem, we need the following lemma.
\begin{lemma}\label{lem:dif}
A function $d(x)$ can be written as the difference of two density functions if and only if
$$\int d=0\quad\text{and}\quad \int|d|\leq 2.$$
\end{lemma}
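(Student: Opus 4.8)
The plan is to prove the two implications separately, both by direct construction.

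For the necessity direction, I would start from a representation $d = p - q$ with $p,q$ densities. Then $\int d = \int p - \int q = 1 - 1 = 0$ is immediate, and since $p, q \ge 0$ we have the pointwise bound $|d| = |p-q| \le p + q$, hence $\int |d| \le \int p + \int q = 2$.

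For the sufficiency direction, I would use the Jordan-type decomposition $d = d^+ - d^-$ with $d^+ = \max(d,0)$ and $d^- = \max(-d,0)$, both nonnegative and integrable. The hypothesis $\int d = 0$ forces $\int d^+ = \int d^- =: A$, and then $\int |d| = \int d^+ + \int d^- = 2A \le 2$ gives $A \le 1$. The idea is to ``pad'' both parts by a common nonnegative function of mass $1 - A$: fix any probability density $\phi$ on $\mathbb{R}$ (say a standard Gaussian) and set $r = (1-A)\phi$, so that $r \ge 0$ and $\int r = 1 - A$. Then $p = d^+ + r$ and $q = d^- + r$ are both nonnegative with $\int p = \int q = A + (1-A) = 1$, hence densities, and $p - q = d^+ - d^- = d$, as desired.

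There is essentially no serious obstacle here; the only points requiring a moment of care are (i) deriving $\int d^+ = \int d^-$ from $\int d = 0$, which is valid because $d$ is integrable so both parts are finite, and (ii) the degenerate case $A = 1$, where $r \equiv 0$ and $d^+, d^-$ are themselves densities — a situation the construction covers automatically. One could instead absorb the padding asymmetrically, but the symmetric choice keeps the bookkeeping trivial.
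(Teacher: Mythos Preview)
Your proof is correct and follows essentially the same approach as the paper: both arguments use the Jordan decomposition $d = d_+ - d_-$ and pad each part symmetrically by $(1-A)\phi$ for an arbitrary density $\phi$, where $A = \tfrac{1}{2}\int|d|$. The paper's presentation just writes the padding constant as $1 - \tfrac{1}{2}\int|d|$ directly rather than naming it $1-A$.
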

\begin{proof}
The ``only if" part is obvious. Now assume the two conditions hold, and then for any density function $f$, we have the following decomposition for $d$,
\begin{align*}
d=\left[d_++\left(1-\frac{1}{2}\int |d|\right)f\right]-\left[d_-+\left(1-\frac{1}{2}\int |d|\right)f\right],
\end{align*}
where $d_+$ and $d_-$ are the positive and negative parts of the function. The first condition implies $\int d_+=\int d_-=\frac{1}{2}\int |d|$. Thus,
\begin{align*}
\int \left[d_++\left(1-\frac{1}{2}\int |d|\right)f\right]=\int\left[d_-+\left(1-\frac{1}{2}\int |d|\right)f\right]=1.
\end{align*}
The second condition guarantees that both $d_++\left(1-\frac{1}{2}\int |d|\right)f$ and $d_-+\left(1-\frac{1}{2}\int |d|\right)f$ are nonnegative. Thus, the proof is complete.
\end{proof}

\begin{proof}[Proof of Theorem \ref{thm:arbitrarylowerbound}]
For the lower bound of the first term $n^{-\frac{2\beta_0}{2\beta_0+1}}$, see the proof of Lemma \ref{lem:term1}. We give a proof for the second term. Consider the following two functions
\begin{align*}
f&=f_0,\\
\wt f &=f_0+ch^{\beta_0}b\left(\frac{x}{h}\right).
\end{align*}
Here, we take $f_0$ as the density function of some normal distribution with mean zero so that $f_0\in\mathcal{P}(\beta_0,L_0/2)$. The function $b$ is defined in Lemma \ref{lem:b}.
The constant $c$ is chosen small enough so that $f\in \mathcal{P}(\beta_0,L_0)$. In order that there exist $g$ and $\wt g$ so that
\begin{align*}
(1-\epsilon)f+\epsilon g=(1-\epsilon)\wt f+\epsilon\wt g,
\end{align*}
it suffices to verify the existence of densities $g$ and $\wt g$ such that
\begin{align*}
g(x)-\wt g(x)=\frac{1-\epsilon}{\epsilon}\left(\wt{f}(x)-f(x)\right)=c\frac{1-\epsilon}{\epsilon}h^{\beta_0}b\left(\frac{x}{h}\right).
\end{align*}
By Lemma \ref{lem:dif}, it further suffices to verify the condition
\begin{align*}
c\frac{1-\epsilon}{\epsilon}\int h^{\beta_0}\left|b\left(\frac{x}{h}\right)\right|dx\leq 2,
\end{align*}
and this is guaranteed by taking some $h\asymp \epsilon^{\frac{1}{\beta_0+1}}$. Now we have $g$ and $\wt{g}$ such that $(1-\epsilon)f+\epsilon g=(1-\epsilon)\wt f+\epsilon\wt g$ holds. Moreover,
$$|f(0)-\wt{f}(0)|=ch^{\beta_0}b(0)\asymp \epsilon^{\frac{\beta_0}{\beta_0+1}}.$$
Apply Lemma \ref{lem:lowerbound}, and the proof is complete.
\end{proof}

\subsection{Proofs of Theorem \ref{thm:lepski3} and Theorem \ref{thm:arbitrarylepski}}

We first prove Theorem \ref{thm:arbitrarylepski}. Then, the proof of Theorem \ref{thm:lepski3} will be sketched using arguments in the proofs of Theorem \ref{thm:lepski} and Theorem \ref{thm:arbitrarylepski}.
\begin{proof}[Proof of Theorem \ref{thm:arbitrarylepski}]
We consider observations $X_1,...,X_n$. We assume that $X_1,...,X_a$ are generated from the density $f$ with some integer $a$, and the remaining observations $X_{a+1},...,X_n$ are generated from contamination. The number $a$ follows $\text{Binomial}(n,1-\epsilon)$. This is without loss of generality, because the definition of $\wh{f}$ does not depend on the order of the data $X_1,...,X_n$. Apply Bernstein's inequality, and we get
$$\mathbb{P}\left(\frac{n-a}{n}\geq 2\epsilon\right)\leq \exp\left(-\frac{3}{8}n\epsilon\right).$$
From now on, we assume that $\epsilon\geq \frac{8\log n}{n}$, so that $\frac{n-a}{n}\leq 2\epsilon$ with probability at least $1-n^{-3}$. The case $\epsilon<  \frac{8\log n}{n}$ will be considered in the end of the proof. Moreover, the following analysis conditions on the event $\left\{\frac{n-a}{n}\leq 2\epsilon\right\}$, and we use $\bar{\mathbb{P}}$ and $\bar{\mathbb{E}}$ to denote probability and expectation conditioning on the random variable $a$.

We start by the following error decomposition,
\begin{eqnarray*}
\wh f_h(0)-f(0) &=& \frac{1}{n}\sum_{i=1}^a\left(h^{-1}K(X_i/h)-\mathbb{E}_{X\sim f}h^{-1}K(X/h)\right) \\
&& + \frac{a}{n}\left(\mathbb{E}_{X\sim f}h^{-1}K(X/h) - f(0)\right) \\
&& + \frac{1}{n}\sum_{i=a+1}^n h^{-1}K(X_i/h) - \frac{n-a}{n}f(0).
\end{eqnarray*}
With similar arguments used in the proof of Theorem \ref{thm:upperbound}, we have
$$\bar{\mathbb{E}}\left(\frac{1}{n}\sum_{i=1}^a\left(h^{-1}K(X_i/h)-\mathbb{E}_{X\sim f}h^{-1}K(X/h)\right)\right)^2\lesssim \frac{1}{nh},$$
and
$$\left|\mathbb{E}_{X\sim f}h^{-1}K(X/h) - f(0)\right|\lesssim h^{\beta_0}.$$
Moreover, $\frac{n-a}{n}\leq 2\epsilon$ implies that
$$\frac{1}{n}\sum_{i=a+1}^n h^{-1}K(X_i/h)\lesssim \frac{\epsilon}{h},$$
and $\frac{n-a}{n}f(0)\lesssim \epsilon$. These bounds motivate us to define an oracle bandwidth $h_*$ that is the smallest $h\in\mathcal{H}$ such that
\begin{align*}
\frac{\epsilon}{h}+ \sqrt{\frac{\log n}{nh}}\leq h^{\wt{\beta}_0}.
\end{align*}
Then it is obvious that $h_*$ satisfies
\begin{equation}\label{eq:hstarstar}
 ch_*^{\wt{\beta}_0}\leq\frac{\epsilon}{h_*}+ \sqrt{\frac{\log n}{nh_*}}\leq h_*^{\wt{\beta}_0},
\end{equation}
with some constant $c>0$. Now we prove that $\wh h\leq h_*$ holds with high probability. According to the definition of $\wh{h}$, we have
\begin{eqnarray*}
\bar{\mathbb{P}}(\wh{h}>h_*) &\leq& \bar{\mathbb{P}}\left(\exists l\geq h_*\text{ and }l\in \mathcal{H}\text{ s.t. } |\wh f_{h_*}(0)-\wh f_l(0) |\geq c_1l^{\wt \beta_0}\right) \\
&\leq&\sum_{l\geq h_*,l\in\mathcal{H}}\bar{\mathbb{P}}\left(|\wh f_{h_*}(0)-\wh f_l(0) |\geq c_1l^{\wt \beta_0}\right).
\end{eqnarray*}
By the risk decomposition, for $l\geq h_*$ and $l\in\mathcal{H}$, the difference $|\wh f_{h^*}(0)-\wh f_l(0) |$ is bounded as
\begin{eqnarray*}
|\wh f_{h_*}(0)-\wh f_l(0)| &\leq& \left|\frac{1}{n}\sum_{i=1}^a\left(h_*^{-1}K(X_i/h_*)-\mathbb{E}_{X\sim f}h_*^{-1}K(X/h_*)\right)\right| \\
&& + \left|\frac{1}{n}\sum_{i=1}^a\left(l^{-1}K(X_i/l)-\mathbb{E}_{X\sim f}l^{-1}K(X/l)\right)\right| \\
&& + C\left(\frac{\epsilon}{h_*}+l^{\beta_0}\right),
\end{eqnarray*}
for some constant $C>0$. According to (\ref{eq:hstarstar}) and the condition $\wt{\beta}_0\leq \beta_0$, we have
$$C\left(\frac{\epsilon}{h_*}+l^{\beta_0}\right)\leq C\left(h_*^{\wt{\beta}_0}+l^{\wt{\beta}_0}\right)\leq \frac{c_1}{4}h_*^{\wt{\beta}_0}+\frac{c_1}{4}l^{\wt{\beta}_0},$$
where the last inequality holds for a sufficiently large $c_1$.
Thus, we have the bound
\begin{eqnarray*}
\bar{\mathbb{P}}(\wh{h}>h_*) &\leq& \sum_{l\geq h_*,l\in\mathcal{H}}\bar{\mathbb{P}}\left(|\wh f_{h_*}(0)-\wh f_l(0) |\geq \frac{c_1}{2}l^{\wt \beta_0}+\frac{c_1}{2}h_*^{\wt \beta_0}\right) \\
&\leq& \sum_{l\geq h_*,l\in\mathcal{H}}\bar{\mathbb{P}}\left( \left|\frac{1}{n}\sum_{i=1}^a\left(h_*^{-1}K(X_i/h_*)-\mathbb{E}_{X\sim f}h_*^{-1}K(X/h_*)\right)\right|\geq \frac{c_1}{4}h_*^{\wt{\beta}_0}\right) \\
&& + \sum_{l\geq h_*,l\in\mathcal{H}}\bar{\mathbb{P}}\left( \left|\frac{1}{n}\sum_{i=1}^a\left(l^{-1}K(X_i/l)-\mathbb{E}_{X\sim f}l^{-1}K(X/l)\right)\right|\geq \frac{c_1}{4}l^{\wt{\beta}_0}\right) \\
&\leq& \sum_{l\geq h_*,l\in\mathcal{H}}\bar{\mathbb{P}}\left( \left|\frac{1}{n}\sum_{i=1}^a\left(h_*^{-1}K(X_i/h_*)-\mathbb{E}_{X\sim f}h_*^{-1}K(X/h_*)\right)\right|\geq \frac{c_1}{4}\sqrt{\frac{\log n}{nh_*}}\right) \\
&& + \sum_{l\geq h_*,l\in\mathcal{H}}\bar{\mathbb{P}}\left( \left|\frac{1}{n}\sum_{i=1}^a\left(l^{-1}K(X_i/l)-\mathbb{E}_{X\sim f}l^{-1}K(X/l)\right)\right|\geq \frac{c_1}{4}\sqrt{\frac{\log n}{nl}}\right),
\end{eqnarray*}
where the last inequality is by (\ref{eq:hstarstar}) and the observation that
$$l^{\wt{\beta}_0}\geq h_*^{\wt{\beta}_0}\geq \sqrt{\frac{\log n}{nh_*}}\geq \sqrt{\frac{\log n}{nl}}.$$
Use Bernstein's inequality in a similar way that derives (\ref{eq:haoyang-nb}), we obtain the bound
$$\bar{\mathbb{P}}\left( \left|\frac{1}{n}\sum_{i=1}^a\left(l^{-1}K(X_i/l)-\mathbb{E}_{X\sim f}l^{-1}K(X/l)\right)\right|\geq \frac{c_1}{4}\sqrt{\frac{\log n}{nl}}\right)\leq 2n^{-3},$$
when the constant $c_1$ is chosen to be sufficiently large. Then, we have
$$\bar{\mathbb{P}}(\wh{h}>h_*)\lesssim n^{-3}\log n.$$

On the event $\wh h\leq h_*$, the error decomposes as
\begin{align*}
|\wh f_{\wh h}(0)-f(0)|\leq |\wh f_{\wh h}(0)-\wh f_{h_*}(0)|+|\wh f_{h_*}(0)-f(0)|.
\end{align*}
Due to definition of $\wh h$, the first term is bounded as
\begin{align*}
|\wh f_{\wh h}(0)-\wh f_{h_*}(0)|\leq c_1h_*^{\wt \beta_0}.
\end{align*}
The second term uses the oracle bandwidth $h_*$. Then, we have
\begin{align*}
\bar{\mathbb{E}}(\wh f_{\wh h}(0)-f(0))^2&\leq \bar{\mathbb{E}}((\wh f_{\wh h}(0)-f(0))^2:\wh h\leq h_*)+\bar{\mathbb{E}}((\wh f_{\wh h}(0)-f(0))^2:\wh h> h_*)\\
&\lesssim \bar{\mathbb{E}}((\wh f_{\wh h}(0)-\wh f_{h_*}(0))^2:\wh h\leq h_*)+\bar{\mathbb{E}}((\wh f_{h_*}(0)-f(0))^2:\wh h\leq h_*)+n^2\bar{\mathbb{P}}(\wh h> h_*)\\
&\lesssim h_*^{2\wt \beta_0} + \frac{1}{nh_*} + h_*^{2\beta_0} + \frac{\epsilon^2}{h_*^2} + n^{-1}\log n\\
&\lesssim \left(\frac{\log n}{n}\right)^{\frac{2\wt\beta_0}{2\wt\beta_0+1}}\vee \epsilon^{\frac{2\wt\beta_0}{\wt\beta_0+1}},
\end{align*}
where we have used (\ref{eq:hstarstar}) in the last inequality. Integrating over the random variable $a$, we have
\begin{eqnarray}
\nonumber \mathbb{E}(\wh f_{\wh h}(0)-f(0))^2 &\leq& \mathbb{E}\left((\wh f_{\wh h}(0)-f(0))^2: \frac{n-a}{n}< 2\epsilon\right) + \mathbb{E}\left((\wh f_{\wh h}(0)-f(0))^2: \frac{n-a}{n}\geq 2\epsilon\right) \\
\nonumber &\lesssim& \left(\frac{\log n}{n}\right)^{\frac{2\wt\beta_0}{2\wt\beta_0+1}}\vee \epsilon^{\frac{2\wt\beta_0}{\wt\beta_0+1}} + n^2\mathbb{P}\left(\frac{n-a}{n}\geq 2\epsilon\right) \\
\label{eq:error-ac-bound0} &\lesssim& \left(\frac{\log n}{n}\right)^{\frac{2\wt\beta_0}{2\wt\beta_0+1}}\vee \epsilon^{\frac{2\wt\beta_0}{\wt\beta_0+1}}.
\end{eqnarray}

Finally, we consider the situation when $\epsilon< \frac{8\log n}{n}$. In this case, for any contamination distribution $g$, there is another $\wt{g}$ such that
$$(1-\epsilon)f+\epsilon g= \left(1-\frac{8\log n}{n}\right)f+\frac{8\log n}{n}\wt{g}.$$
See \cite{chen2015robust} for a rigorous argument of the above equality. Then, we can equivalently analyze the risk with contamination proportion $\frac{8\log n}{n}$. This leads to the error bound
\begin{equation}
\left(\frac{\log n}{n}\right)^{\frac{2\wt\beta_0}{2\wt\beta_0+1}}\vee \left(\frac{\log n}{n}\right)^{\frac{2\wt\beta_0}{\wt\beta_0+1}}\asymp \left(\frac{\log n}{n}\right)^{\frac{2\wt\beta_0}{2\wt\beta_0+1}}\asymp \left(\frac{\log n}{n}\right)^{\frac{2\wt\beta_0}{2\wt\beta_0+1}}\vee \epsilon^{\frac{2\wt\beta_0}{\wt\beta_0+1}}.\label{eq:error-ac-bound}
\end{equation}
Hence, we let $n\rightarrow \infty$ and $\epsilon\rightarrow 0$, and the proof is complete.
\end{proof}

\begin{proof}[Proof of Theorem \ref{thm:lepski3}]
For the estimator that uses (\ref{eq:h-lep-beta}), the result is a special case of Theorem \ref{thm:arbitrarylepski} by letting $\wt{\beta}_0=\beta_0$ in view of the bounds (\ref{eq:error-ac-bound0}) and (\ref{eq:error-ac-bound}). For the estimator that uses (\ref{eq:h-lep-epsilon}), the result follows the same argument as the proof of Theorem \ref{thm:lepski} .
\end{proof}

\subsection{Proofs of Lemma \ref{thm:unidentifiable} and Theorem \ref{thm:impossible}}\label{sec:pf-imp}

\begin{proof}[Proof of Lemma \ref{thm:unidentifiable}]
We use $\phi(\cdot)$ to denote the density of $N(0,1)$. Then, define
\begin{align*}
f(x)=&c_{3}\phi(c_{3}x),\\
g(x)=&\frac{c_4}{\epsilon^{\frac{1}{\wt{\beta}_0+1}}}\phi\left(\frac{c_{4}x}{\epsilon^{\frac{1}{\wt{\beta}_0+1}}}\right),\\
\wt f(x)=&(1-\epsilon)f(x)+\epsilon g(x),\\
\wt g(x)=&\phi(x).
\end{align*}
First, there exists a constant $c_3$ depending on $c_1,c_2$ such that for any $\beta_0,\wt{\beta}_0\leq c_1$ and $L_0,\wt{L}_0\geq c_2$, we have $f\in \mathcal{P}(\beta_0,L_0)\cap \mathcal{P}(\wt{\beta}_0,\wt L_0/2)$. This is due to the fact that $\phi^{(\alpha)}(x)$ is uniformly bounded for all $\alpha\leq c$ when $c$ is some constant. By definition,
\begin{align*}
\epsilon g(x)=c_{4}\epsilon^{\frac{\wt{\beta}_0}{\wt{\beta}_0+1}}\phi\left(\frac{c_{4}x}{\epsilon^{\frac{1}{\wt{\beta}_0+1}}}\right),
\end{align*}
For the same reason as above, there exists a constant $c_4$ depending on $c_1,c_2$ such that for any $\wt{\beta}_0\leq c_1$ and $\wt{L}_0\geq c_2$, we have $\epsilon g\in \Sigma(\wt{\beta}_0,\wt L_0/2)$, which then implies $\wt f\in\mathcal{P}(\wt{\beta}_0,\wt L_0)$. Now we note that
$$(1-\epsilon)f+ \epsilon g = (1-0)\wt{f}+ 0\wt{g},$$
and
$$\left|\wt{f}(0)-f(0)\right|=\epsilon|f(0)-g(0)|\geq c_0\epsilon^{\frac{\wt{\beta}_0}{\wt{\beta}_0+1}},$$
when $\epsilon$ smaller than a constant and where $c_0$ is a constant depending on $c_3,c_4$.
Thus for any estimator $\wh{f}(0)$,
$$\left[\sup_{p(\epsilon,f,g)\in\mathcal{M}(\epsilon,\beta_0,L_0)}\mathbb{E}_{p^n}\left(\wh{f}(0)-f(0)\right)^2\right]\vee\left[\sup_{p(0,f,g)\in\mathcal{M}(0,\wt{\beta}_0,\wt{L}_0)}\mathbb{E}_{p^n}\left(\wh{f}(0)-f(0)\right)^2\right]\geq c_0\epsilon^{\frac{2\wt{\beta}_0}{\wt{\beta}_0+1}},$$
by applying Lemma \ref{lem:lowerbound}.
\end{proof}

\begin{proof}[Proof of Theorem \ref{thm:impossible}]
For any constants $c_1,c_2$, let $c_0$ be the constant as guaranteed to exist in Theorem \ref{thm:unidentifiable}, and assume there exists an estimator $\wh f(0)$ which is $(c_1,c_2,c_3,r_1(\beta_0),r_2(\beta_0))$ rate adaptive. With $L_0=c_2$, we consider two models respectively with parameters $(n,\epsilon,\beta_0,L_0)$ and $(n,0,\wt\beta_0,L_0)$, with the specific values of $n,\epsilon,\beta_0,\wt\beta_0$ to be chosen later. By the definition of rate adaptivity (\ref{eq:adaptive-def}), we have:
\begin{align*}
\sup_{p(\epsilon,f,g)\in\mathcal{M}(\epsilon,\beta_0,L_0)}\mathbb{E}_{p^n}\left(\wh{f}(0)-f(0)\right)^2&\leq c_3[\epsilon^{r_2(\beta_0)}\vee n^{-r_1(\beta_0)}],\\
\sup_{p(0,f,g)\in\mathcal{M}(0,\wt\beta_0,L_0)}\mathbb{E}_{p^n}\left(\wh{f}(0)-f(0)\right)^2&\leq c_3n^{-r_1(\wt\beta_0)}.
\end{align*}
On the other hand, Theorem \ref{thm:unidentifiable} claims that for any small enough $\epsilon$, any large enough $n$, any $\beta_0,\wt\beta_0\leq c_1$, we have
\begin{align*}
\sup_{p(\epsilon,f,g)\in\mathcal{M}(\epsilon,\beta_0,L_0)}\mathbb{E}_{p^n}\left(\wh{f}(0)-f(0)\right)^2\vee \sup_{p(0,f,g)\in\mathcal{M}(0,\wt\beta_0,L_0)}\mathbb{E}_{p^n}\left(\wh{f}(0)-f(0)\right)^2& \geq c_0\epsilon^{\frac{2\wt{\beta}_0}{\wt{\beta}_0+1}}.
\end{align*}
Together this yields
\begin{equation}\label{eq:contradiction}
c_3[\epsilon^{r_2(\beta_0)}\vee n^{-r_1(\beta_0)}\vee n^{-r_1(\wt\beta_0)}]\geq c_0\epsilon^{\frac{2\wt{\beta}_0}{\wt{\beta}_0+1}}.
\end{equation}
Now we choose $n,\beta_0,\wt\beta_0,\epsilon$ in a legitimate range so that this inequality becomes a contradiction. First we fix some $\beta_0\leq c_1$. Then we choose $\epsilon$ to be small enough such that $c_3\epsilon^{r_2(\beta_0)}\leq c_0 \epsilon^a$ for some $a>0$. Indeed this holds as long as $\epsilon^{r_2(\beta_0)}<\frac{c_0}{c_3}$. Now since $a>0$, we can choose $\wt\beta_0$ to be small enough such that $\frac{2\wt{\beta}_0}{\wt{\beta}_0+1}<a$. Finally since $r_1(\beta_0),r_1(\wt\beta_0)>0$, we can choose $n$ large enough such that $c_3[n^{-r_1(\beta_0)}\vee n^{-r_1(\wt{\beta}_0)}]< c_0\epsilon^{\frac{2\wt{\beta}_0}{\wt{\beta}_0+1}}$. With these choices, it is obvious that equation (\ref{eq:contradiction}) becomes a contradiction, as desired.
\end{proof}

\subsection{Proof of Theorem \ref{thm:multi}}

The proofs are exactly the same as in the one-dimensional case. For the lower bounds, we only need to replace the mollifier function $l(x)$ by its multivariate extension $l_d(x)=l(\|x\|)$. The upper bounds are achieved by
$\wh{f}_h(0)=\frac{1}{n(1-\epsilon)}\sum_{i=1}^n\frac{1}{h^d}K_d\left(\frac{X_i}{h}\right)$,
where the bandwidth is $h=n^{-\frac{1}{2\beta_0+d}}\wedge n^{-\frac{1}{2\beta_0+d}}\epsilon^{-\frac{2}{2\beta_0+d}}$ for structured contamination and is $h=n^{-\frac{1}{2\beta_0+d}}\vee \epsilon^{\frac{1}{\beta_0+d}}$ for arbitrary contamination. We can use a product kernel for $K_d$. See \cite[Chapter 12]{devroyecombinatorial} for details.

\section*{Acknowledgement}

The authors thank Zhao Ren for reading the manuscript and for his helpful comments.
The research of CG is supported in part by NSF grant DMS-1712957.

\bibliographystyle{plainnat}
\bibliography{Robust}


\end{document}